\newtheorem{lemma}{Lemma}[section]
\newtheorem{theorem}[lemma]{Theorem}
\newtheorem{remark}[lemma]{Remark}
\newtheorem{coro}[lemma]{Corollary}
\newtheorem{definition}[lemma]{Definition}
\newtheorem{example}[lemma]{Example}
\def\R{{\mathbb R}}
\def\d{{\rm d}}
\numberwithin{equation}{section}
\title[Averaging on Infinite Intervals for SODE]{Averaging Principle on Infinite Intervals
for Stochastic Ordinary Differential Equations}
\author{David Cheban}
\address{D. Cheban: School of Mathematical Sciences, Dalian University of Technology, Dalian 116024, P. R. China;
State University of Moldova, Faculty of Mathematics and Informatics, Department of Mathematics,
A. Mateevich Street 60, MD--2009 Chi\c{s}in\u{a}u, Moldova}
\email{cheban@usm.md; davidcheban@yahoo.com}
\author{Zhenxin Liu}
\address{Z. Liu (Corresponding author): School of Mathematical Sciences,
Dalian University of Technology, Dalian 116024, P. R. China}
\email{zxliu@dlut.edu.cn}
\thanks{This work is partially supported by NSFC Grants 11522104, 11871132, 11925102,
and Xinghai Jieqing and DUT19TD14 funds from Dalian University of Technology.}
\date{March 25, 2020}
\subjclass[2010]{34C29, 60H10, 37B20, 34C27}
\keywords{Averaging principle; Stochastic differential equations; second Bogolyubov theorem;
periodic solution; quasi-periodic solution; almost periodic solution;
Poisson stable solution}
\begin{document}

\begin{abstract}
In contrast to existing works on stochastic averaging on finite intervals, we establish
an averaging principle on the whole real axis, i.e. the so-called
second Bogolyubov theorem, for semilinear stochastic ordinary differential equations in Hilbert space
with Poisson stable (in particular, periodic, quasi-periodic, almost periodic,
almost automorphic etc) coefficients. Under some appropriate conditions we prove that
there exists a unique recurrent solution to the original equation, which possesses the same
recurrence property as the coefficients, in a small neighborhood of the stationary solution to
the averaged equation, and this recurrent solution converges to the stationary solution of averaged equation
uniformly on the whole real axis when the time scale approaches zero.
\end{abstract}

\maketitle

\section{Introduction}

Highly oscillating systems may be ``averaged" under some suitable conditions, and the evolution of the
averaged system can reflect in some sense the dynamics of the original system. This idea of averaging dates back to
the perturbation theory developed by Clairaut, Laplace and Lagrange in the 18th century, and is made rigorous by
Krylov, Bogolyubov, Mitropolsky \cite{KB,Bo,BM} for nonlinear oscillations.
There are vast amount of works on averaging for deterministic systems which we will not
mention here. Meantime, there are also many works on averaging principle for stochastic differential equations so far, see
e.g. \cite{CF,CL,DW,FW,Kh,Sk,Ve,Vrk_1995,WR} among others. But to our best knowledge all the existing works
on stochastic averaging are concerned with the so-called first Bogolyubov theorem, i.e. the
convergence of the solution of the original equation to that of the averaged equation on finite intervals.

In the present paper, we establish an averaging principle on the whole real axis, i.e. the so-called
second Bogolyubov theorem, for stochastic differential equations: if there exists a stationary
solution for the averaged equation, then there exists in a small neighborhood
(in the super-norm topology) a solution of the original equation which is defined on the whole
axis and has the same recurrence property (in distribution sense) as the coefficients
of the original equation. Furthermore, this recurrent solution is more general than the classical second
Bogolyubov theorem, which only treats the almost periodic case. Note that the work \cite{KMF_2015} studies
the averaging principle for stochastic differential equations with almost
periodic coefficients, but they only show the convergence on the finite interval,
not the super-norm topology on the whole axis.

To be more precise, we investigate the semilinear stochastic ordinary differential equation with
Poisson stable (in particular, periodic, quasi-periodic, Bohr almost
periodic, almost automorphic, Birkhoff recurrent, Levitan almost periodic, almost recurrent,
pseudo periodic, pseudo recurrent) in time coefficients. Under some suitable conditions, this equation has a unique
$L^2$-bounded solution which has the same recurrent properties as the coefficients, see \cite{CL_2017,LL} for details.
In this paper, we show that this recurrent solution converges to the unique stationary solution
of the averaged solution uniformly on the whole real axis when the time scale goes to zero.

The paper is organized as follows. In the second section we collect some known notions and facts.
Namely we present the construction of shift dynamical systems, definitions and basic properties of
Poisson stable functions, Shcherbakov's comparability method, and the existence of compatible
solutions for stochastic differential equations. In the third and fourth sections, we investigate
the averaging principle on infinite intervals for  linear and semilinear stochastic differential
equations respectively.

\section{Preliminaries}

\subsection{Shift dynamical systems}

Let $(\mathcal X,\rho)$ be a complete metric space and
$(\mathcal X,\mathbb R,\pi)$
be a {\em dynamical system (or flow)} on $\mathcal X$, i.e.
the mapping $\pi :\mathbb R\times \mathcal X\to \mathcal X$
is continuous, $\pi(0,x)=x$ and
$\pi(t+s,x)=\pi(t,\pi(s,x))$ for any $x\in \mathcal X$ and
$t,s\in\mathbb R$. The mapping $t\mapsto \pi(t,x)$ is called the {\em motion through $x$}.
Denote by $C(\mathbb R,\mathcal X)$ the space of
all continuous functions $\varphi :\mathbb R \to \mathcal X$
equipped with the distance
\begin{equation*}\label{eqD1}
d(\varphi_{1},\varphi_{2}):=\sum_{k=1}^{\infty}\frac{1}{2^k}
\frac{d_{k}(\varphi_1,\varphi_{2})}{1+d_{k}(\varphi_1,\varphi_{2})},
\end{equation*}
where
$$
d_{k}(\varphi_1,\varphi_{2})
:=\sup\limits_{|t|\le k}\rho(\varphi_1(t),\varphi_{2}(t)),
$$
which generates the compact-open topology on $C(\mathbb R,\mathcal X)$.
The space $(C(\mathbb R,\mathcal X),d)$ is a complete metric space
(see, e.g. \cite{Sel,Sch72,Sch85,sib}).

\begin{remark}\label{remCh}\rm
(i) Let $\varphi, \varphi_{n}\in C(\mathbb R,\mathcal X)$
($n\in\mathbb N$).
Then $\lim\limits_{n\to \infty}d(\varphi_{n},\varphi)=0$
if and only if $\lim\limits_{n\to\infty}\max\limits_{|t|\le l}\rho(\varphi_{n}(t),\varphi(t))=0$
for any $l>0$.

(ii) If there exists a sequence $l_n\to +\infty$ such that
$\lim\limits_{n\to \infty}\max\limits_{|t|\le
l_n}\rho(\varphi_{n}(t),\varphi(t))=0$, then $\lim\limits_{n\to
\infty}d(\varphi_{n},\varphi)=0$ and vice versa. See \cite{sib} for details.
\end{remark}

Let us now introduce two examples of shift dynamical systems which we will use later
in this paper.

\begin{example}\label{ex1} \rm
For given $\varphi\in C(\mathbb R,\mathcal X)$,
we denote by $\varphi^\tau$ the
{\em $\tau$-translation of $\varphi$}, i.e.
$\varphi^\tau(t)=\varphi(\tau+t)$ for $t\in\mathbb R$.
Let $\sigma :\mathbb R\times C(\mathbb R,\mathcal X)\to
C(\mathbb R,\mathcal X)$ be a mapping defined by equality
$\sigma(\tau,\varphi):=\varphi^{\tau}$ for $(\tau,\varphi)\in
\mathbb R\times C(\mathbb R,\mathcal X)$. Clearly $\sigma(0,\varphi)=\varphi$ and
$\sigma(\tau_1+\tau_2,\varphi)=\sigma(\tau_2,\sigma(\tau_1,\varphi))$
for $\varphi\in C(\mathbb R,\mathcal X)$ and $\tau_1,\tau_2\in\mathbb
R$. It is immediate to check (see, e.g. \cite{Ch2015,Sel,Sch72,sib})
that the mapping $\sigma :\mathbb R\times C(\mathbb R,\mathcal X)\to
C(\mathbb R,\mathcal X)$ is continuous, and consequently the triplet
$(C(\mathbb R,\mathcal X),\mathbb R,\sigma)$ is a dynamical system which is
called {\em shift dynamical system} or {\em Bebutov dynamical system}.

The {\em hull of $\varphi$}, denoted by $H(\varphi)$, is the set of all the limits
of $\varphi^{\tau_n}$ in $C(\mathbb R,\mathcal X)$, i.e.
\[
H(\varphi):=\{\psi\in C(\mathbb R,\mathcal X): \psi=\lim_{n\to\infty}
\varphi^{\tau_n} \hbox{ for some sequence } \{\tau_n\} \subset \mathbb
R\}.
\]
Note that the set $H(\varphi)$ is a closed and translation
invariant subset of $C(\mathbb R,\mathcal X)$ and consequently
it naturally defines on $H(\varphi)$ a shift dynamical system -- $(H(\varphi),\mathbb R,\sigma)$.
\end{example}

\begin{example}\label{ex2}\rm
Like in \cite{CL_2017}, we denote by $BUC(\mathbb{R}\times \mathcal X,\mathcal X)$
the space of all continuous functions $f:\mathbb{R}\times \mathcal X \to \mathcal X$ which are bounded on every
bounded subset from $\mathbb{R}\times \mathcal X$ and continuous in $t\in
\mathbb{R}$ uniformly with respect to $x$ on each bounded subset $Q$ of $\mathcal X$. We equip this space
with the topology of uniform convergence on bounded subsets
of $\mathbb{R}\times \mathcal X$, which can be generated by the following metric
\begin{equation}\label{eqDB1}
d(f,g):=\sum_{k=1}^{\infty}\frac{1}{2^k}\frac{d_{k}(f,g)}{1+d_{k}(f,g)},
\end{equation}
where
$$
d_{k}(f,g):=\sup\limits_{|t|\le k,\ x\in Q_k}\rho(f(t,x),g(t,x))
$$
with $Q_k\subset\mathcal X$ being bounded, $Q_k\subset Q_{k+1}$ and $\cup_{k\in\mathbb N}Q_k=\mathcal X$.

For given $f\in BUC(\mathbb{R}\times \mathcal X,\mathcal X)$ and $\tau\in\mathbb{R}$,
we denote by $f^{\tau}$ the {\em $\tau$-translation of $f$}, i.e. $f^{\tau}(t,x):=f(t+\tau,x)$ for
$(t,x)\in\mathbb{R}\times \mathcal X$. Note that the space $BUC(\mathbb{R}\times \mathcal X, \mathcal X)$ endowed
with the distance (\ref{eqDB1}) is a complete metric space and
invariant with respect to translations.
Now we define a mapping
$\sigma:\mathbb{R}\times BUC(\mathbb{R}\times \mathcal X,\mathcal X)
\to BUC(\mathbb{R}\times \mathcal X,\mathcal X)$, $(\tau,f)\mapsto f^\tau$.
It is clear that $\sigma(0,f)=f$ and
$\sigma(\tau_2,\sigma(\tau_1,f))=\sigma(\tau_1+\tau_2,f)$ for all
$f\in BUC(\mathbb{R}\times \mathcal X,\mathcal X)$ and
$\tau_1,\tau_2\in\mathbb{R}$. It is immediate to see (e.g. \cite[ChI]{Ch2015}) that
the mapping $\sigma$ is continuous and consequently the triplet
$(BUC(\mathbb{R}\times \mathcal X,\mathcal X),\mathbb{R} ,\sigma)$
is a dynamical system. Similar to Example \ref{ex1}, for given $f\in BUC(\mathbb{R}\times \mathcal X,\mathcal X)$,
the hull $H(f)$ is a closed and translation invariant subset of $BUC(\mathbb{R}\times \mathcal X,\mathcal X)$
and consequently it naturally defines on $H(f)$ a shift dynamical system -- $(H(f),\mathbb R,\sigma)$.
\end{example}

Denote by $BC(\mathcal X, \mathcal X)$ the space of all continuous functions $f: \mathcal X \to \mathcal X$
which are bounded on every bounded subset of $\mathcal X$ and equipped with the distance
\[
d(f,g):=\sum_{k=1}^{\infty}\frac{1}{2^k}\frac{d_{k}(f,g)}{1+d_{k}(f,g)},
~ d_{k}(f,g):=\sup_{x\in Q_k}\rho(f(x),g(x))
\]
where $Q_k$ are the same as above. Note that $(BC(\mathcal X,\mathcal X),d)$
is a complete metric space. For given $F\in BUC(\mathbb R\times \mathcal X, \mathcal X)$, define
$\mathcal F:  \mathbb R \to BC(\mathcal X, \mathcal X), t\mapsto \mathcal F (t)$ by letting
$\mathcal F(t):=F(t,\cdot): \mathcal X\to \mathcal X$. Clearly, $\mathcal F\in C(\mathbb R, BC(\mathcal X, \mathcal X))$.

\begin{remark}\label{remF1}\rm
The following statements are true:
\begin{enumerate}
\item
The mapping $h: BUC(\mathbb R\times \mathcal X,
\mathcal X)\to C(\mathbb R, BC(\mathcal X,\mathcal X))$
defined by equality $h(F):=\mathcal F$ establishes an isometry between
$BUC(\mathbb R\times\mathcal X,\mathcal X)$ and $C(\mathbb R, BC(\mathcal X, \mathcal X))$.

\item
$h(F^{\tau})=\mathcal F^{\tau}$ for any $\tau\in \mathbb R$ and
$F\in BUC(\mathbb R\times \mathcal X, \mathcal X)$,
i.e. the shift dynamical systems
$(BUC(\mathbb R\times\mathcal X,\mathcal X),\mathbb R,\sigma)$ and
$\left(C(\mathbb R, BC(\mathcal X,\mathcal X)),\mathbb R,\sigma\right)$
are (dynamically) homeomorphic.
\end{enumerate}
\end{remark}

\subsection{Poisson stable functions}

Let us recall the types of Poisson stable functions to be used in
this paper; we refer the reader to \cite{Sel,Sch72,Sch85,sib} for
further details and the relations among these types of functions.

\begin{definition} \rm
A function $\varphi\in C(\mathbb R,\mathcal X)$ is called
{\em stationary}
(respectively, {\em $\tau$-periodic}) if $\varphi(t)=\varphi(0)$
(respectively, $\varphi(t+\tau)=\varphi(t)$) for all $t\in \mathbb
R$.
\end{definition}

\begin{definition} \rm
(i) Let $\varepsilon >0$. A number $\tau \in \mathbb R$ is called {\em
$\varepsilon$-almost period} of the function
$\varphi:\mathbb R\rightarrow \mathcal X$ if
$\rho(\varphi(t+\tau),\varphi(t))<\varepsilon$ for all
$t\in\mathbb R$. Denote by $\mathcal T(\varphi,\varepsilon)$ the
set of $\varepsilon$-almost periods of $\varphi$.

(ii) A function $\varphi \in C(\mathbb R,\mathcal X)$
is said to be {\em Bohr almost periodic} if the set of $\varepsilon$-almost periods of
$\varphi$ is {\em relatively dense} for each $\varepsilon >0$,
i.e. for each $\varepsilon >0$ there exists a constant $l=l(\varepsilon)>0$
such that $\mathcal T(\varphi,\varepsilon)\cap [a,a+l]\not=\emptyset$ for all $a\in\mathbb R$.

(iii) A function $\varphi \in C(\mathbb R,\mathcal X)$ is said to be
{\em pseudo-periodic in the positive} (respectively, {\em negative})
{\em direction} if for each $\varepsilon >0$ and $l>0$ there exists a
$\varepsilon$-almost period $\tau >l$ (respectively, $\tau <-l$)
of the function $\varphi$. The function $\varphi$ is called
pseudo-periodic if it is {\em pseudo-periodic} in both directions.
\end{definition}

\begin{remark} \rm
A function $\varphi \in C(\mathbb R,\mathcal X)$
is pseudo-periodic in the
positive (respectively, negative) direction if and only if there
is a sequence $t_n\to +\infty$ (respectively, $t_n\to -\infty$)
such that $\varphi^{t_n}$ converges to $\varphi$ uniformly with respect to
$t\in \mathbb R$ as $n\to \infty$.
\end{remark}

\begin{definition} \rm
(i) A number $\tau\in\mathbb R$ is said to be {\em
$\varepsilon$-shift} of $\varphi \in C(\mathbb R,\mathcal X)$ if
$d(\varphi^{\tau},\varphi)<\varepsilon$; denote
$\mathfrak T(\varphi,\varepsilon):=\{\tau:
d(\varphi^{\tau},\varphi)<\varepsilon\}$.
A function $\varphi \in C(\mathbb R,\mathcal X)$
is called {\em almost recurrent (in the sense of Bebutov)}
if for every $\varepsilon >0$
the set $\mathfrak T(\varphi,\varepsilon)$ is relatively dense.

(ii) A function $\varphi\in C(\mathbb R,\mathcal X)$ is called
{\em Lagrange stable} if $\{\varphi^{\tau}: \tau\in \mathbb R\}$ is a relatively
compact subset of $C(\mathbb R,\mathcal X)$.

(iii) A function $\varphi \in C(\mathbb R,\mathcal X)$ is called
{\em Birkhoff recurrent} if it is almost recurrent and Lagrange stable.
\end{definition}

\begin{definition} \rm
A function $\varphi \in C(\mathbb R,\mathcal X)$ is called
{\em Poisson stable in the positive }
(respectively, {\em negative}) {\em direction} if for
every $\varepsilon >0$ and $l>0$ there exists $\tau >l$
(respectively, $\tau <-l$) such that
$d(\varphi^{\tau},\varphi)<\varepsilon$. The function $\varphi$ is
called {\em Poisson stable} if it is Poisson stable in both directions.
\end{definition}

In what follows, we denote as well $\mathcal Y$ a
complete metric space.

\begin{definition} \rm
(i) A function $\varphi\in C(\mathbb R,\mathcal X)$ is called
{\em Levitan almost periodic}
if there exists a Bohr almost periodic function
$\psi \in C(\mathbb R,\mathcal Y)$ such that for any
$\varepsilon >0$ there exists $\delta =\delta (\varepsilon)>0$
such that $\mathcal T(\psi,\delta)
\subseteq \mathfrak T(\varphi,\varepsilon)$.

(ii) A function $\varphi \in C(\mathbb R,\mathcal X)$ is said to be
{\em almost automorphic} if it is Levitan almost periodic and Lagrange
stable.
\end{definition}

\begin{remark} \rm Note that:
\begin{enumerate}
\item
Every Bohr almost periodic function is Levitan almost
pe\-ri\-o\-dic.
\item
The function $\varphi \in C(\mathbb R,\mathbb R)$ defined
by equality $$\varphi(t)=\dfrac{1}{2+\cos t +\cos \sqrt{2}t}$$ is
Levitan almost periodic, but it is not Bohr almost periodic
\cite[ChIV]{Lev-Zhi}.
\end{enumerate}
\end{remark}

\begin{definition} \rm
A function $\varphi \in C(\mathbb R,\mathcal X)$ is called {\em
quasi-periodic with the spectrum of frequencies
$\nu_1,\nu_2,\ldots,\nu_k$} if the following conditions are
fulfilled:
\begin{enumerate}
\item the numbers $\nu_1,\nu_2,\ldots,\nu_k$ are rationally
independent; \item there exists a continuous function $\Phi
:\mathbb R^{k}\to \mathcal X$ such that
$\Phi(t_1+2\pi,t_2+2\pi,\ldots,t_k+2\pi)=\Phi(t_1,t_2,\ldots,t_k)$
for all $(t_1,t_2,\ldots,t_k)\in \mathbb R^{k}$; \item
$\varphi(t)=\Phi(\nu_1 t,\nu_2 t,\ldots,\nu_k t)$ for $t\in
\mathbb R$.
\end{enumerate}
\end{definition}

Let $\varphi \in C(\mathbb R,\mathcal X)$. Denote by $\mathfrak
N_{\varphi}$ (respectively, $\mathfrak M_{\varphi}$) the family of
all sequences $\{t_n\}\subset \mathbb R$ such that $\varphi^{t_n}
\to \varphi$ (respectively, $\{\varphi^{t_n}\}$ converges) in
$C(\mathbb R,\mathcal X)$ as $n\to \infty$. We denote by $\mathfrak N_{\varphi}^{u}$
(respectively, $\mathfrak M_{\varphi}^{u}$) the family of sequences $\{t_n\}\in
\mathfrak N_{\varphi}$ (respectively, $\{t_n\}\in\mathfrak M_{\varphi}$)
such that $\varphi^{t_n}$ converges to $\varphi$ (respectively,  $\varphi^{t_n}$ converges) uniformly
with respect to $t\in\mathbb R$ as $n\to \infty$.

\begin{remark}\rm
\begin{enumerate}
\item The function $\varphi \in C(\mathbb R,\mathcal X)$ is pseudo-periodic
in the positive (respectively, negative) direction if and only if
there is a sequence $\{t_n\}\in \mathfrak N_{\varphi}^{u}$ such
that $t_n\to +\infty$ (respectively, $t_n\to -\infty$) as $n\to
\infty$.

\item Let $\varphi \in C(\mathbb R,\mathcal X)$,
$\psi \in C(\mathbb R,\mathcal Y)$
and $\mathfrak N_{\psi}^{u} \subseteq \mathfrak N_{\varphi}^{u}$.
If the function $\psi$ is pseudo-periodic in the positive
(respectively, negative) direction, then so is $\varphi$.
\end{enumerate}
\end{remark}

\begin{definition}\label{defPR}\rm (\cite{Sch68,Sch72,Sch85})
A function $\varphi \in C(\mathbb R,\mathcal X)$ is called  {\em
pseudo-recurrent}
if for any $\varepsilon >0$ and
$l\in\mathbb R$ there exists $L\ge l$ such that for any $\tau_0\in
\mathbb R$ we can find a number $\tau \in [l,L]$ satisfying
$$
\sup\limits_{|t|\le 1/\varepsilon}\rho(\varphi(t+\tau_0
+\tau),\varphi(t+\tau_0))\le \varepsilon.
$$
\end{definition}

\begin{remark}\label{remPR} \rm (\cite{Sch68,Sch72,Sch85,sib})
\begin{enumerate}
\item Every Birkhoff recurrent function is pseudo-recurrent, but
the inverse statement is not true in general.

\item If the function $\varphi \in C(\mathbb R,\mathcal X)$ is
pseudo-recurrent, then every function $\psi\in H(\varphi)$ is
pseudo-recurrent.

\item If  the function $\varphi \in C(\mathbb R,\mathcal X)$  is Lagrange
stable and  every function $\psi\in H(\varphi)$ is Poisson stable,
then $\varphi$ is pseudo-recurrent.
\end{enumerate}
\end{remark}

Finally, we remark that a Lagrange stable function is not Poisson
stable in general, but all other types of functions introduced
above are Poisson stable.

\begin{definition}\label{defF1} \rm
A function
$F\in BUC(\mathbb R\times \mathcal X,\mathcal X)$ is said to
possess the property $A$ in $t\in\mathbb R$
uniformly with respect to $x$ on every bounded subset
$Q$ of $\mathcal X$ if the motion $\sigma(\cdot,F)$ through $F$ with respect to the Bebutov dynamical
system $(BUC(\mathbb R\times \mathcal X,\mathcal X),\R,\sigma)$ possesses the property $A$. Here
the property $A$ may be stationary, periodic, Bohr/Levitan almost periodic etc.
\end{definition}

\begin{remark}\rm
Note that a function $\varphi\in C(\mathbb{R},\mathcal X)$ possesses the property $A$
if and only if the motion $\sigma(\cdot,\varphi):\mathbb{R}\to C(\mathbb{R},\mathcal X)$ through $\varphi$
with respect to the Bebutov dynamical system $(C(\mathbb{R},\mathcal X),\mathbb{R},\sigma)$ possesses this property.
\end{remark}

\subsection{Shcherbakov's comparability method by character of recurrence}

\begin{definition} \rm
A function $\varphi \in C(\mathbb R,\mathcal X)$ is said to be {\em
comparable} (respectively, {\em
strongly comparable}) {\em by character of
recurrence} with $\psi\in C(\mathbb R,\mathcal Y)$ if
$\mathfrak N_{\psi}\subseteq \mathfrak N_{\varphi}$ (respectively,
$\mathfrak M_{\psi}\subseteq \mathfrak M_{\varphi}$).
\end{definition}

\begin{theorem}\label{th1}{\rm(\cite[ChII]{Sch72}, \cite{scher75})}
The following statements hold:
\begin{enumerate}
\item $\mathfrak M_{\psi}\subseteq \mathfrak M_{\varphi}$ implies
$\mathfrak N_{\psi}\subseteq \mathfrak N_{\varphi}$, and hence
      strong comparability implies comparability.

\item  Let $\varphi \in C(\mathbb R,\mathcal X)$ be comparable by
    character of recurrence with $\psi\in C(\mathbb R,\mathcal Y)$.
 If the function $\psi$ is stationary (respectively, $\tau$-periodic,
 Levitan almost periodic, almost recurrent, Poisson stable),
 then so is $\varphi$.
 \item Let $\varphi \in C(\mathbb R,\mathcal X)$ be strongly
     comparable by character of recurrence with
     $\psi\in C(\mathbb R,\mathcal Y)$.
If the function $\psi$ is quasi-periodic with the spectrum of
frequencies $\nu_1,\nu_2,\dots,\nu_k$ (respectively, Bohr almost
periodic, almost automorphic, Birkhoff recurrent, Lagrange
stable), then so is $\varphi$.

\item Let $\varphi \in C(\mathbb R,\mathcal X)$ be strongly
    comparable by character of recurrence with
    $\psi\in C(\mathbb R,\mathcal Y)$ and $\psi$
be Lagrange stable. If $\psi$ is pseudo-periodic (respectively,
pseudo-recurrent), then so is $\varphi$.
\end{enumerate}
\end{theorem}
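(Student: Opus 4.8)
The plan is to reduce every recurrence property appearing in the statement to an equivalent condition on the sets $\mathfrak N_\bullet$, $\mathfrak M_\bullet$ and their uniform counterparts $\mathfrak N_\bullet^{u}$, $\mathfrak M_\bullet^{u}$, and then to transport that condition along the inclusions $\mathfrak N_\psi\subseteq\mathfrak N_\varphi$ (comparability) or $\mathfrak M_\psi\subseteq\mathfrak M_\varphi$ (strong comparability). The backbone is item (1), which I would prove by an interleaving trick. Given $\{t_n\}\in\mathfrak N_\psi$, we have $\{t_n\}\in\mathfrak N_\psi\subseteq\mathfrak M_\psi\subseteq\mathfrak M_\varphi$, so $\varphi^{t_n}$ converges to some $\tilde\varphi$, and it only remains to identify $\tilde\varphi$ with $\varphi$. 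For this I interleave $\{t_n\}$ with the zero sequence: the sequence $c_{2k-1}=0$, $c_{2k}=t_k$ satisfies $\psi^{c_n}\to\psi$, hence $\{c_n\}\in\mathfrak N_\psi\subseteq\mathfrak M_\varphi$ and $\varphi^{c_n}$ converges; its odd subsequence tends to $\varphi$ and its even subsequence tends to $\tilde\varphi$, so $\tilde\varphi=\varphi$ by uniqueness of limits. This yields $\mathfrak N_\psi\subseteq\mathfrak N_\varphi$, and since trivially $\mathfrak N_\bullet\subseteq\mathfrak M_\bullet$, strong comparability implies comparability.

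For item (2) I would translate each property into a single-sequence condition and invoke $\mathfrak N_\psi\subseteq\mathfrak N_\varphi$. If $\psi$ is stationary then every constant sequence lies in $\mathfrak N_\psi$, hence in $\mathfrak N_\varphi$, forcing $\varphi^{\tau}=\varphi$ for all $\tau$; the $\tau$-periodic case is identical using the single constant sequence $\tau$. If $\psi$ is Poisson stable, there are $\{t_n\},\{s_n\}\in\mathfrak N_\psi$ with $t_n\to+\infty$ and $s_n\to-\infty$, and these pass to $\mathfrak N_\varphi$. The Levitan almost periodic case uses transitivity: by definition $\psi$ is comparable with an auxiliary Bohr almost periodic function $\theta$, i.e. $\mathfrak N_\theta\subseteq\mathfrak N_\psi$, so $\mathfrak N_\theta\subseteq\mathfrak N_\varphi$ and $\varphi$ is Levitan almost periodic. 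The almost recurrent case is the delicate one here, since almost recurrence is a statement about relative density of the shift sets $\mathfrak T(\varphi,\eps)$ rather than a single convergent sequence; I would argue by contradiction, assuming some $\mathfrak T(\varphi,\eps_0)$ has gaps of length tending to infinity and extracting a sequence that violates $\mathfrak N_\psi\subseteq\mathfrak N_\varphi$.

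For items (3) and (4) the inclusion is the stronger $\mathfrak M_\psi\subseteq\mathfrak M_\varphi$, and every property carries a Lagrange-stability (relative compactness) component. Lagrange stability transfers immediately: any sequence admits, after passing to a subsequence in $\mathfrak M_\psi\subseteq\mathfrak M_\varphi$, a convergent $\varphi$-image. The genuine work is the uniform upgrade needed for the Bohr almost periodic, quasi-periodic, almost automorphic, Birkhoff recurrent, pseudo-periodic and pseudo-recurrent cases, all characterized through the uniform sets $\mathfrak N^{u},\mathfrak M^{u}$. The key lemma I would establish is that, once $\varphi$ is known to be Lagrange stable, $\mathfrak M_\psi\subseteq\mathfrak M_\varphi$ promotes to $\mathfrak N^{u}_\psi\subseteq\mathfrak N^{u}_\varphi$. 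To prove it I take $\{t_n\}\in\mathfrak N^{u}_\psi$, so $\psi^{t_n}\to\psi$ uniformly and hence $\varphi^{t_n}\to\varphi$ compact-openly by item (1); if uniform convergence of $\varphi^{t_n}$ failed there would be $\eps_0>0$ and $|s_n|\to\infty$ with $\rho(\varphi(s_n+t_n),\varphi(s_n))\ge\eps_0$. Using Lagrange stability I extract $\psi^{s_n}\to h$ and $\varphi^{s_n}\to g$; the uniform smallness of $\psi^{t_n}-\psi$ forces $\psi^{s_n+t_n}\to h$ as well, so $\{s_n\}$ and $\{s_n+t_n\}$ share the same $\psi$-limit, and interleaving them as in item (1) shows they share the same $\varphi$-limit $g$. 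Evaluating the compact-open limits at $t=0$ then gives $\rho(\varphi(s_n+t_n),\varphi(s_n))\to 0$, a contradiction. The uniform $\mathfrak M$-version follows by applying this to difference sequences.

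I expect this uniform upgrade to be the main obstacle: it is precisely the point where compact-open convergence, which is all that $\mathfrak M$-comparability delivers, must be boosted to uniform convergence on $\mathbb{R}$, and it explains why Lagrange stability is built into the hypotheses of items (3) and (4). The remaining bookkeeping, namely reading off quasi-periodicity from the torus structure of the hull and deducing pseudo-periodicity or pseudo-recurrence once the relevant uniform sequence sets are matched, is then routine via the characterizations of these classes in terms of $\mathfrak N^{u}$ and $\mathfrak M^{u}$ recorded in the preliminaries together with Remark \ref{remPR}. The almost recurrent case of item (2) and the pseudo-recurrent case of item (4), both phrased through relatively dense shift sets, are the only places where a direct extraction-and-contradiction argument, rather than a clean sequence inclusion, seems unavoidable.
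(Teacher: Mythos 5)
This theorem is not proved in the paper at all: it is quoted verbatim from Shcherbakov (\cite[ChII]{Sch72}, \cite{scher75}) as part of the background on the comparability method, so there is no in-paper argument to compare yours against. Judged on its own terms, your outline reproduces the standard Shcherbakov proofs correctly in its main steps. The interleaving trick for item (1) is exactly right, as are the reductions of stationarity, periodicity, Poisson stability and Levitan almost periodicity to single-sequence conditions on $\mathfrak N_{\psi}\subseteq\mathfrak N_{\varphi}$, and the contradiction argument you sketch for almost recurrence does close (pick, for each $n$, an interval of length equal to the inclusion length of $\mathfrak T(\psi,1/n)$ that misses $\mathfrak T(\varphi,\varepsilon_0)$, and take $\tau_n$ in its intersection with $\mathfrak T(\psi,1/n)$). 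Your key lemma --- that Lagrange stability of $\varphi$ upgrades $\mathfrak M_{\psi}\subseteq\mathfrak M_{\varphi}$ to $\mathfrak N^{u}_{\psi}\subseteq\mathfrak N^{u}_{\varphi}$ via the interleaving of $\{s_n\}$ and $\{s_n+t_n\}$ --- is correct and is indeed the crux of items (3) and (4); combined with the Bochner criterion and with Lemma \ref{lAP1} it delivers the Bohr almost periodic and pseudo-periodic cases.

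Two places are asserted rather than proved and deserve flagging. First, the quasi-periodic case with a \emph{prescribed} spectrum of frequencies is not ``routine bookkeeping'': one must first show that strong comparability produces a continuous flow homomorphism $h:H(\psi)\to H(\varphi)$ with $h(\psi)=\varphi$ (well-definedness again via interleaving, but continuity needs a separate compactness argument), and then compose with the torus parametrization of $H(\psi)$ to exhibit $\varphi(t)=\tilde\Phi(\nu_1t,\dots,\nu_kt)$. Second, the pseudo-recurrent case of item (4) needs the hereditary property that every $\tilde\varphi\in H(\varphi)$ is comparable with some $\tilde\psi\in H(\psi)$, so that Remark \ref{remPR} (parts (ii) and (iii)) can be applied; this again rests on the factor-map formulation of strong comparability and is not a consequence of the sequence inclusions alone as you have stated them. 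Neither issue is a wrong turn --- both are the standard route --- but they are the only genuinely nontrivial content left implicit in your sketch.
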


\begin{lemma}\label{lAP1}{\rm(\cite{CL_2017})}
Let $\varphi \in C(\mathbb R,\mathcal X)$,
$\psi \in C(\mathbb R,\mathcal Y)$. The following statements hold:
\begin{enumerate}
\item
If $\mathfrak M_{\psi}^{u} \subseteq \mathfrak
M_{\varphi}^{u}$, then
\begin{enumerate}
\item
   $\mathfrak N_{\psi}^{u} \subseteq \mathfrak N_{\varphi}^{u}$;
\item
   if the function $\psi$ is Bohr almost
   periodic, then so is $\varphi$.
\end{enumerate}
\item
If $\mathfrak N_{\psi}^{u} \subseteq \mathfrak
N_{\varphi}^{u}$ and $\psi$ is pseudo periodic, then so is $\varphi$.
\end{enumerate}
\end{lemma}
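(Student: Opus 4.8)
The plan is to prove the three assertions separately: statements (1)(b) and (2) reduce at once to standard characterisations, while the inclusion (1)(a) carries the only genuine difficulty. Throughout write $d_u(f,g):=\sup_{t\in\mathbb R}\rho(f(t),g(t))$ for the sup-distance, so that $\{t_n\}\in\mathfrak N^u_\varphi$ means $d_u(\varphi^{t_n},\varphi)\to0$ and $\{t_n\}\in\mathfrak M^u_\varphi$ means $\{\varphi^{t_n}\}$ is $d_u$-convergent. Two elementary facts will be used repeatedly: each shift $f\mapsto f^{\tau}$ is a $d_u$-isometry, and $d_u$ is translation invariant, so that $d_u(f^{\tau},g)=d_u(f,g^{-\tau})$ for every $\tau\in\mathbb R$.

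For (1)(b) I would invoke Bochner's criterion: a function is Bohr almost periodic precisely when every sequence of its translates admits a uniformly convergent subsequence, i.e. when every $\{h_n\}\subset\mathbb R$ has a subsequence lying in the corresponding family $\mathfrak M^u$. Assume $\psi$ is Bohr almost periodic and fix an arbitrary $\{h_n\}$. By the criterion choose $\{h_{n_k}\}\in\mathfrak M^u_\psi$; the hypothesis $\mathfrak M^u_\psi\subseteq\mathfrak M^u_\varphi$ then gives $\{h_{n_k}\}\in\mathfrak M^u_\varphi$, i.e. $\{\varphi^{h_{n_k}}\}$ converges uniformly. As $\{h_n\}$ was arbitrary, Bochner's criterion applied to $\varphi$ yields that $\varphi$ is Bohr almost periodic. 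Note that this uses only the hypothesis of (1), not the conclusion (1)(a).

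For (2) I would use the characterisation of pseudo-periodicity recorded in the Remark following the definition of quasi-periodic functions: $\chi$ is pseudo-periodic in the positive (resp. negative) direction iff some $\{t_n\}\in\mathfrak N^u_\chi$ has $t_n\to+\infty$ (resp. $t_n\to-\infty$). Since $\psi$ is pseudo-periodic, pick $\{t_n\}\in\mathfrak N^u_\psi$ with $t_n\to+\infty$ and $\{s_n\}\in\mathfrak N^u_\psi$ with $s_n\to-\infty$. The inclusion $\mathfrak N^u_\psi\subseteq\mathfrak N^u_\varphi$ places both sequences in $\mathfrak N^u_\varphi$, and the same characterisation applied to $\varphi$ shows that $\varphi$ is pseudo-periodic in both directions, hence pseudo-periodic.

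The heart of the matter is (1)(a). Let $\{t_n\}\in\mathfrak N^u_\psi$, so $\psi^{t_n}\to\psi$ uniformly; this is in particular uniform convergence, whence $\{t_n\}\in\mathfrak M^u_\psi\subseteq\mathfrak M^u_\varphi$ and $\varphi^{t_n}\to\bar\varphi$ uniformly for some $\bar\varphi\in C(\mathbb R,\mathcal X)$. Everything reduces to the identification $\bar\varphi=\varphi$. Using that the shifts are $d_u$-isometries one first sees that $\{-t_n\}$, and more generally $\{kt_n\}$ for every $k\in\mathbb Z$, again lies in $\mathfrak N^u_\psi$ (indeed $d_u(\psi^{-t_n},\psi)=d_u(\psi,\psi^{t_n})\to0$, and inductively $d_u(\psi^{(k+1)t_n},\psi^{kt_n})=d_u(\psi^{t_n},\psi)\to0$), hence in $\mathfrak M^u_\varphi$; moreover the identity $d_u(f^{\tau},g)=d_u(f,g^{-\tau})$ turns $\varphi^{t_n}\to\bar\varphi$ into the equivalent statement $\bar\varphi^{-t_n}\to\varphi$. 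I expect the identification $\bar\varphi=\varphi$ itself to be the \emph{main obstacle}: the relations available so far produce only isometric identities and do not directly force $d_u(\bar\varphi,\varphi)=0$, so the hypothesis must be exploited for a whole family of sequences rather than for $\{t_n\}$ alone. Concretely, I would pass to the double-indexed translates $t_n-t_m$ and check, via the isometry identity, that $\psi^{t_n-t_m}\to\psi^{-t_m}\to\psi$ and $\varphi^{t_n-t_m}\to\bar\varphi^{-t_m}\to\varphi$ uniformly, and then run a diagonal extraction to propagate the comparability and pin down the limit. This is exactly the uniform refinement of Shcherbakov's comparability theorem (Theorem \ref{th1}(1)), whose proof transfers to the sup-topology because the shifts are $d_u$-isometries, and I would model the limit-identification on that proof. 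Once $\bar\varphi=\varphi$ is secured we obtain $\{t_n\}\in\mathfrak N^u_\varphi$, which establishes (1)(a) and completes (1).
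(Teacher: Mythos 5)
Your arguments for (1)(b) and (2) are correct and complete, and they use the standard routes: (1)(b) is Bochner's criterion applied to $\psi$ and then to $\varphi$ through the inclusion $\mathfrak M^u_{\psi}\subseteq\mathfrak M^u_{\varphi}$, and (2) is exactly item (ii) of the unnumbered remark preceding Definition~\ref{defPR} in the paper (the characterization of pseudo-periodicity by sequences in $\mathfrak N^u$ tending to $\pm\infty$). The paper itself gives no proof of this lemma (it is quoted from \cite{CL_2017}), so the only question is whether your argument for (1)(a) stands on its own.

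It does not: you have correctly isolated the identification $\bar\varphi=\varphi$ as the crux, but the concrete device you propose --- diagonalizing over the differences $t_n-t_m$ --- cannot pin the limit down. What that computation yields (in your notation $d_u$ for the sup-distance) is
$d_u(\varphi^{t_n-t_m},\varphi)\le d_u(\varphi^{t_n},\bar\varphi)+d_u(\varphi^{t_m},\bar\varphi)\to 0$,
i.e. that suitable difference sequences lie in $\mathfrak N^u_{\varphi}$; this holds whether or not $\bar\varphi=\varphi$ (it is already trivial for $n=m$), so no contradiction with $\bar\varphi\neq\varphi$ can be extracted from it. Likewise the facts $\{kt_n\}\in\mathfrak N^u_{\psi}$ and $\bar\varphi^{-t_n}\to\varphi$ only reproduce isometric identities such as $d_u(\varphi,\hat\varphi)=d_u(\varphi,\bar\varphi)$ for the limit $\hat\varphi$ of $\varphi^{-t_n}$, which do not force $d_u(\bar\varphi,\varphi)=0$. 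The missing idea is the interleaving trick on which the proof of Theorem~\ref{th1}(i) rests, and it transfers verbatim to the uniform topology: set $\tau_{2k-1}:=t_k$ and $\tau_{2k}:=0$. Since $\psi^{t_k}\to\psi$ uniformly and $\psi^{0}=\psi$, the combined sequence satisfies $\{\tau_k\}\in\mathfrak N^u_{\psi}\subseteq\mathfrak M^u_{\psi}\subseteq\mathfrak M^u_{\varphi}$, so $\varphi^{\tau_k}$ converges uniformly to a single limit $g$; the even-indexed subsequence is constantly $\varphi$, forcing $g=\varphi$, while the odd-indexed subsequence is $\varphi^{t_k}\to\bar\varphi$, forcing $\bar\varphi=g=\varphi$ and hence $\{t_n\}\in\mathfrak N^u_{\varphi}$. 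With this one line (1)(a) closes; without it, the argument as written is incomplete.
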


\subsection{Compatible solutions of semilinear stochastic differential
equations}

Let $\mathfrak B$ be a real separable Banach space with the
norm $|\cdot|$, and $L(\mathfrak B)$ be the Banach space of all bounded linear
operators acting on the space $\mathfrak B$ equipped with
operator norm $\|\cdot\|$.
Consider the linear homogeneous equation
\begin{equation}\label{eqLN}
\dot{x}=\mathcal A(t)x
\end{equation}
on the space $\mathfrak B$,
where $\mathcal A\in C(\mathbb R,L(\mathfrak B))$.
Denote by $U(t,\mathcal A)$ the Cauchy operator
(see, e.g. \cite{Dal}) of equation (\ref{eqLN}).

\begin{definition} \rm
Equation (\ref{eqLN}) is said to be {\em uniformly
asymptotically stable} if there are positive constants $\mathcal N$
and $\nu$ such that
\begin{equation}\label{eqLN1.1}
\left\|G_{\mathcal A}(t,\tau)\right\|\le \mathcal Ne^{-\nu (t-\tau)} \ \
\mbox{for any}\ t\ge \tau \ (t,\tau \in \mathbb R),
\end{equation}
where $G_{\mathcal A}(t,\tau):=U(t,\mathcal A)U^{-1}(\tau,\mathcal
A)$ for any $t,\tau\in\mathbb R$.
\end{definition}

If $\mathcal A \in C(\mathbb R,L(\mathfrak B))$,
then by $H(\mathcal A)$ we denote the closure in the space
$C(\mathbb R,L(\mathfrak B))$ of all translations $\{\mathcal A^{h}: h\in\mathbb R\}$, where
$\mathcal A^{h}(t):=\mathcal A(t+h)$ for $t\in\mathbb R$.
Denote by $C_{b}(\mathbb R,\mathfrak B)$ the Banach space of all
continuous and bounded mappings
$\varphi :\mathbb R\to \mathfrak B$ equipped with the norm
$\|\varphi\|_{\infty}:=\sup\{|\varphi(t)|: t\in\mathbb R\}$.
Note that if $f\in C_b(\mathbb R, \mathfrak B)$ and $\tilde{f}\in
H(f)$, then $\|\tilde{f}\|_{\infty}\le \|f\|_{\infty}$.

\begin{lemma}\label{lLN}\cite[ChIII]{Che_2009}
Suppose that equation (\ref{eqLN}) is uniformly asymptotically stable such that
inequality (\ref{eqLN1.1}) holds. Then
\begin{equation}\label{eqLN1.2}
\|G_{\tilde{\mathcal A}}(t,\tau)\|\le \mathcal Ne^{-\nu
(t-\tau)}\nonumber
\end{equation}
for any $t\ge \tau $ ($t,\tau \in \mathbb R$) and
$\tilde{\mathcal A}\in H(\mathcal A)$.
\end{lemma}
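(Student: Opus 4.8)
The plan is to reduce the estimate for an arbitrary $\tilde{\mathcal A}\in H(\mathcal A)$ first to the translates $\mathcal A^{h}$, where it follows at once from a shift identity, and then to pass to the limit using continuous dependence of the transition operator on the coefficient. First I would record the \emph{shift identity} for the transition operators. Recall that $G_{\mathcal A}(t,\tau)$ maps $x(\tau)$ to $x(t)$ along solutions of (\ref{eqLN}); since $y(t):=x(t+h)$ solves $\dot y=\mathcal A(t+h)y=\mathcal A^{h}(t)y$ whenever $x$ solves (\ref{eqLN}), comparing transition operators gives
\begin{equation*}
G_{\mathcal A^{h}}(t,\tau)=G_{\mathcal A}(t+h,\tau+h),\qquad h,t,\tau\in\mathbb R.
\end{equation*}
Applying (\ref{eqLN1.1}) to the pair $(t+h,\tau+h)$, and noting that $t\ge\tau$ is equivalent to $t+h\ge\tau+h$, this yields $\|G_{\mathcal A^{h}}(t,\tau)\|\le\mathcal N e^{-\nu(t-\tau)}$ for every $h\in\mathbb R$ and all $t\ge\tau$, with the \emph{same} constants $\mathcal N,\nu$, independent of $h$. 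Thus the desired estimate already holds on the set of translates $\{\mathcal A^{h}:h\in\mathbb R\}$, which is dense in $H(\mathcal A)$ by definition of the hull.

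Next, fix $\tilde{\mathcal A}\in H(\mathcal A)$ and choose a sequence $h_{n}$ with $\mathcal A^{h_{n}}\to\tilde{\mathcal A}$ in $C(\mathbb R,L(\mathfrak B))$, i.e.\ uniformly on every compact interval. For fixed $t\ge\tau$ I would show that $G_{\mathcal A^{h_{n}}}(t,\tau)\to G_{\tilde{\mathcal A}}(t,\tau)$ in $L(\mathfrak B)$. Granting this, continuity of the operator norm passes the uniform bound to the limit:
\begin{equation*}
\|G_{\tilde{\mathcal A}}(t,\tau)\|=\lim_{n\to\infty}\|G_{\mathcal A^{h_{n}}}(t,\tau)\|\le\mathcal N e^{-\nu(t-\tau)},
\end{equation*}
which is exactly (\ref{eqLN1.2}).

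The main obstacle is establishing this continuous dependence of the transition operator on the coefficient in the infinite-dimensional setting. I would argue via the integral form $G_{\mathcal B}(t,\tau)=I+\int_{\tau}^{t}\mathcal B(s)G_{\mathcal B}(s,\tau)\,\mathrm{d}s$. Writing $\Delta_{n}(t):=\|G_{\mathcal A^{h_{n}}}(t,\tau)-G_{\tilde{\mathcal A}}(t,\tau)\|$ and subtracting the two integral equations, one splits the integrand as $\mathcal A^{h_{n}}(s)\bigl[G_{\mathcal A^{h_{n}}}(s,\tau)-G_{\tilde{\mathcal A}}(s,\tau)\bigr]+\bigl[\mathcal A^{h_{n}}(s)-\tilde{\mathcal A}(s)\bigr]G_{\tilde{\mathcal A}}(s,\tau)$. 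On $[\tau,t]$ the operators $\mathcal A^{h_{n}}(s)$ are uniformly bounded (being uniformly convergent there) and $\|G_{\tilde{\mathcal A}}(s,\tau)\|$ is bounded, so the contribution of the second term is controlled by $\sup_{s\in[\tau,t]}\|\mathcal A^{h_{n}}(s)-\tilde{\mathcal A}(s)\|\to 0$. Gronwall's inequality applied to $\Delta_{n}$ then forces $\Delta_{n}(t)\to 0$, with the same conclusion for the inverse operators.

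This Gronwall estimate is the quantitative heart of the argument; the remaining steps are the routine verifications that $U(\cdot,\mathcal A)$, its inverse, and hence $G_{\mathcal A}$ are well defined and jointly continuous, which is standard for linear equations in a Banach space. I would note that no compactness of $H(\mathcal A)$ is needed: the uniform constants $\mathcal N,\nu$ come for free from the shift identity, and only a single approximating sequence together with Gronwall is used to close the argument.
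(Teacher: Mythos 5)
The paper offers no proof of this lemma---it is quoted from \cite[ChIII]{Che_2009}---but your argument is correct and is precisely the standard proof of that cited result: the shift identity $G_{\mathcal A^{h}}(t,\tau)=G_{\mathcal A}(t+h,\tau+h)$ gives the uniform bound on the dense set of translates, and the Gronwall-based continuous dependence of $G_{\mathcal B}(t,\tau)$ on $\mathcal B$ in the compact-open topology passes it to every $\tilde{\mathcal A}\in H(\mathcal A)$. No gaps; your remark that compactness of the hull is not needed is also accurate.
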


Let $(\Omega , \mathcal F,\mathbb P)$ be a probability space, and
$L^{2}(\mathbb P,\mathfrak B)$ be the space of $\mathfrak
B$-valued random variables $X$ such that
\begin{equation*}
\mathbb E|X|^2 :=\int_{\Omega}|X|^2 \d\mathbb P<\infty .
\end{equation*}
Then $L^2(\mathbb P,\mathfrak B)$ is a Banach space equipped with
the norm $\|X\|_{2}:=\left(\int_{\Omega}|X|^2  \d\mathbb P\right)^{1/2}$.

Let $\mathcal P(\mathfrak B)$ be the space of all Borel probability measures
on $\mathfrak B$ endowed with the $\beta$ metric:
$$
\beta (\mu,\nu) :=\sup\left\{ \left| \int f \d \mu - \int f\d
\nu\right|: \|f\|_{BL} \le 1 \right\}, \quad \hbox{for }\mu,\nu\in
\mathcal P(\mathfrak B),
$$
where $f$ are bounded Lipschitz continuous real-valued functions
on $\mathfrak B$ with the norm
\[
\|f\|_{BL}:= Lip(f) + \|f\|_\infty,~ \hbox{with } Lip(f):=\sup_{x\neq y}
\frac{|f(x)-f(y)|}{|x-y|},~ \|f\|_{\infty}:=\sup_{x\in \mathfrak B}|f(x)|.
\]
Recall that a sequence $\{\mu_n\}\subset \mathcal P(\mathfrak B)$ is said to
{\em weakly converges} to $\mu$ if $\int f \d\mu_n\to \int f \d\mu$
for all $f\in C_b(\mathfrak B)$, where $C_b(\mathfrak B)$ is the space of all bounded
continuous real-valued functions on $\mathfrak B$. It is well-known
(see, e.g. \cite[ChXI]{Dud_2004}) that $(\mathcal P(\mathfrak B),\beta)$
is a separable complete metric space and that a sequence
$\{\mu_n\}$ weakly converges to $\mu$ if and only if $\beta(\mu_n,
\mu) \to0$ as $n\to\infty$.

\begin{definition}\label{aad}\rm
A sequence of random variables $\{X_n\}$ is said to \emph{converge
in distribution} to the random variable $X$ if the corresponding
laws $\{\mu_n\}$ of $\{X_n\}$ weakly converge to the law $\mu$ of
$X$, i.e. $\beta(\mu_n,\mu)\to 0$.
\end{definition}

In the following, we assume that $\mathcal H$ is a real separable Hilbert space.
We still denote the norm in $\mathcal H$ by $|\cdot|$ and the operator norm
in $L(\mathcal H)$ by $\|\cdot\|$ which will not cause confusion.
Let us consider the stochastic differential equation
\begin{equation}\label{eqUAS_0}
\d X(t)=(\mathcal A(t)X(t)+F(t,X(t)))\d t + G(t,X(t))\d W(t),
\end{equation}
where $\mathcal A\in C(\mathbb R,L(\mathcal H))$
and $F,G\in C(\mathbb{R}\times \mathcal H,\mathcal H)$.
Here $W$ is a two-sided standard one-dimensional Brownian motion defined on the probability space
$(\Omega,\mathcal F,\mathbb P)$. We set $\mathcal F_{t}:=\sigma\{W(u):  u \le t\}$.

\begin{definition} \rm
An $\mathcal F_{t}$-adapted process
$\{X(t)\}_{t\in\mathbb R}$ is said to be a mild solution of
equation \eqref{eqUAS_0} on $\R$ if it satisfies the following
stochastic integral equation
$$
X(t)=G_{\mathcal A}(t,s)X(s)
+\int_{s}^{t}G_{\mathcal A}(t,\tau)F(\tau,X(\tau))\d \tau
+\int_{s}^{t}G_{\mathcal A}(t,\tau)G(\tau,X(\tau))\d W(\tau)
$$
for all $t\geq s$ and each $s\in\R$.
\end{definition}

\begin{definition}\label{defL1} \rm
We say that functions $F$ and $G$ satisfy the condition
\begin{enumerate}

\item[(C1)] if there exists a constant $M\ge 0$ such that
$|F(t,0)|\vee|G(t,0)|\le M$ for $t\in \mathbb R$;

\item[(C2)] if there exists a constant $ L\ge 0$ such that
$Lip(F)\vee Lip(G)\le L$, where $Lip(F):=\sup_{t\in\R, x\neq y}\frac{|F(t,x)-F(t,y)|}{|x-y|}$;

\item[(C3)] if $F$ and $G$ are continuous
            in $t$ uniformly with respect to $x$ on each bounded
            subset $Q\subset\mathcal H$.
\end{enumerate}
\end{definition}

\begin{remark}\label{remL10} \rm
\begin{enumerate}
\item
If $F$ and $G$ satisfy the conditions (C1)--(C3), then
$F, G\in BUC (\mathbb R\times \mathcal H, \mathcal H)$.

\item
If $F$ and $G$ satisfy (C1)--(C2) with the constants $M$ and
$L$, then every pair of functions $(\tilde{F},\tilde{G})$ in
$H(F,G):= \overline{\{(F^{\tau},G^{\tau}): \tau\in \mathbb R\}}$,
the hull of $(F,G)$, also possess the same property with the same
constants.
\end{enumerate}
\end{remark}

\begin{definition} \rm
Let $\{\varphi (t)\}_{t\in\mathbb R}$ be a mild solution of
equation \eqref{eqUAS_0}. Then $\varphi$ is called {\em
compatible} (respectively, {\em strongly compatible}) {\em in
distribution} if $\mathfrak N_{(\mathcal A,F,G)}
\subseteq \tilde{\mathfrak N}_{\varphi}$
(respectively,
$\mathfrak M_{(\mathcal A,F,G)}
\subseteq \tilde{\mathfrak M}_{\varphi}$), where
$\tilde{\mathfrak N}_{\varphi}$
(respectively, $\tilde{\mathfrak M}_{\varphi}$) means the set of
all sequences $\{t_n\}\subset\mathbb R$ such that the sequence
$\{\varphi(\cdot+t_n)\}$ converges to $\varphi(\cdot)$
(respectively, $\{\varphi(\cdot+t_n)\}$ converges) in distribution
uniformly on any compact interval.
\end{definition}

\begin{theorem}\label{t3.4.4}
Consider the equation \eqref{eqUAS_0}. Suppose that the following
conditions hold:
\begin{enumerate}
\item[(a)]
$\sup\limits_{t\in \mathbb R}\|\mathcal{A}(t)\|<+\infty$;
\item[(b)]
equation (\ref{eqLN}) is uniformly asymptotically stable such that \eqref{eqLN1.1} holds;
\item[(c)]
the functions $F$ and $G$ satisfy the conditions (C1) and
(C2).
\end{enumerate}

Then the following
statements hold:
\begin{enumerate}
\item
If $L< \frac{\nu}{\mathcal N \sqrt{2+\nu}}$, then
equation $(\ref{eqUAS_0})$ has a unique solution
$\xi \in C(\mathbb{R},B[0,r])$ which satisfies
\begin{equation}\label{eq b}
\xi(t)=\int_{-\infty}^{t}G_{\mathcal A}(t,\tau)F(\tau,\xi(\tau))\d\tau
+\int_{-\infty}^{t}G_{\mathcal A}(t,\tau)F(\tau,\xi(\tau))\d W(\tau),
\end{equation}
where
\begin{equation}\label{eq_r}
r=\frac{\mathcal N M\sqrt{2+\nu}}{\nu-\mathcal N L\sqrt{2+\nu}}
\end{equation}
and
$$B[0,r]:=\{x\in L^{2}(\mathbb P,\mathcal H):\ \|x\|_{2}\le r\};$$

\item
If additionally $F$ and $G$ satisfy (C3) and
$L< \frac{\nu}{2\mathcal N \sqrt{1+\nu}}$, then
\begin{enumerate}
\item
$\mathfrak M^{u}_{(\mathcal A,F,G)}
\subseteq\tilde{\mathfrak M}^{u}_{\xi}$, where $\tilde{\mathfrak M}^{u}_{\xi}$ means the
the set of all sequences $\{t_n\}\subset\mathbb R$ such that the sequence
$\{\xi(\cdot+t_n)\}$ converges in distribution uniformly on $\mathbb R$;
\item
the solution $\xi$ is strongly
compatible in distribution.
\end{enumerate}
\end{enumerate}
\end{theorem}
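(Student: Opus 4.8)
The plan is to handle the two parts separately: part (1) by a contraction argument on the whole line, and part (2) by a continuous-dependence (comparability) argument built on top of the fixed point produced in (1).

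For part (1) I would work in the complete metric space of $\mathcal F_t$-adapted, $L^2$-continuous processes $x\in C(\mathbb R,B[0,r])$ with the norm $\sup_{t}\|x(t)\|_2$, and define
\begin{equation*}
(\Phi x)(t):=\int_{-\infty}^{t}G_{\mathcal A}(t,\tau)F(\tau,x(\tau))\,\d\tau
+\int_{-\infty}^{t}G_{\mathcal A}(t,\tau)G(\tau,x(\tau))\,\d W(\tau).
\end{equation*}
Using the exponential bound $\|G_{\mathcal A}(t,\tau)\|\le \mathcal N e^{-\nu(t-\tau)}$ from (\ref{eqLN1.1}), the linear growth $|F(\tau,u)|\vee|G(\tau,u)|\le M+L|u|$ coming from (C1)--(C2), a weighted Cauchy--Schwarz estimate for the Lebesgue integral and the It\^o isometry for the stochastic integral, I would obtain
\begin{equation*}
\sup_{t\in\mathbb R}\|(\Phi x)(t)\|_2\le \frac{\mathcal N\sqrt{2+\nu}}{\nu}\,(M+Lr).
\end{equation*}
The value (\ref{eq_r}) of $r$ is exactly the number for which this bound equals $r$, and $L<\nu/(\mathcal N\sqrt{2+\nu})$ is what makes $r>0$; hence $\Phi$ maps the space into itself. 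Replacing the growth bound by the Lipschitz bound in the same computation gives contraction constant $\mathcal N L\sqrt{2+\nu}/\nu<1$ under the same smallness of $L$, so Banach's fixed point theorem yields a unique $\xi\in C(\mathbb R,B[0,r])$ satisfying (\ref{eq b}); continuity in $t$ and adaptedness are routine.

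For part (2) the crux is that the solution depends continuously, in distribution, on the coefficients, uniformly on $\mathbb R$ when the translations converge uniformly. First I would record the equivariance of the construction: if $\{t_n\}\in\mathfrak M^{u}_{(\mathcal A,F,G)}$ with limit $(\tilde{\mathcal A},\tilde F,\tilde G)\in H(\mathcal A,F,G)$, then, since $W(\cdot+t_n)-W(t_n)$ is again a standard two-sided Brownian motion, $\xi(\cdot+t_n)$ has the same law as the fixed point $\xi_n$ of $\Phi$ built from the translated data $(\mathcal A^{t_n},F^{t_n},G^{t_n})$ and driven by $W$. It therefore suffices to compare $\xi_n$ with the fixed point $\tilde\xi$ associated to $(\tilde{\mathcal A},\tilde F,\tilde G)$, both driven by the same $W$, and to prove $\sup_{t\in\mathbb R}\|\xi_n(t)-\tilde\xi(t)\|_2\to0$.

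To do this I would subtract the two fixed-point identities and split each integrand into a Lipschitz part controlled by $L\,|\xi_n-\tilde\xi|$, a coefficient error $F^{t_n}-\tilde F$, $G^{t_n}-\tilde G$ evaluated along $\tilde\xi$, and a Green's-function error $G_{\mathcal A^{t_n}}-G_{\tilde{\mathcal A}}$ multiplied by the bounded quantities $\tilde F(\cdot,\tilde\xi),\tilde G(\cdot,\tilde\xi)$. The Lipschitz part feeds back into $\sup_t\|\xi_n-\tilde\xi\|_2$, and the sharper hypothesis $L<\nu/(2\mathcal N\sqrt{1+\nu})$ is precisely what keeps the resulting self-map coefficient strictly below $1$, so that the comparison inequality can be closed. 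The coefficient errors vanish as $n\to\infty$ because $\{t_n\}\in\mathfrak M^{u}$, $\tilde\xi(\tau)\in B[0,r]$, and (C3) holds. I expect the real obstacle to be the Green's-function error over the infinite interval: by Lemma \ref{lLN} every $\tilde{\mathcal A}\in H(\mathcal A)$ inherits the same exponential bound, so on $\tau<t-T$ both Green's functions are uniformly exponentially small and the integral tail is small independently of $t$, while on the window $[t-T,t]$ continuous dependence of the Cauchy operator on the coefficient, together with $\mathcal A^{t_n}\to\tilde{\mathcal A}$ uniformly, makes $\|G_{\mathcal A^{t_n}}(t,\tau)-G_{\tilde{\mathcal A}}(t,\tau)\|$ small uniformly in $t$; combining the two regimes gives the required uniform-in-$t$ decay.

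Finally, because convergence in $\|\cdot\|_2$ dominates the bounded-Lipschitz distance, $\beta(\mathrm{law}\,\xi_n(t),\mathrm{law}\,\tilde\xi(t))\le\|\xi_n(t)-\tilde\xi(t)\|_2$, the uniform $L^2$-estimate yields $\sup_{t\in\mathbb R}\beta(\mathrm{law}\,\xi(t+t_n),\mathrm{law}\,\tilde\xi(t))\to0$, which is exactly $\{t_n\}\in\tilde{\mathfrak M}^{u}_{\xi}$ and proves (2a). Repeating the identical argument with uniform convergence of translations replaced by compact-open convergence (so the window $[t-T,t]$ and the coefficient errors are controlled only for $t$ in compact sets, the tails still being handled by the uniform exponential bound) gives $\mathfrak M_{(\mathcal A,F,G)}\subseteq\tilde{\mathfrak M}_{\xi}$, i.e. the strong compatibility in distribution asserted in (2b).
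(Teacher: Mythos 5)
Your proposal is correct and follows essentially the same route as the paper, which itself gives no details but defers to Theorem 4.6 of \cite{CL_2017}: a Banach fixed-point argument in $C(\mathbb R,B[0,r])$ with exactly the bound $\frac{\mathcal N\sqrt{2+\nu}}{\nu}(M+Lr)$ for part (1), and a continuous-dependence-in-distribution comparison of the fixed points attached to translated and limiting coefficients for part (2). The only detail you gloss over is that $\tilde\xi(\tau)\in B[0,r]$ is an $L^2$-bound rather than an almost sure one, so passing from uniform convergence of $F^{t_n},G^{t_n}$ on bounded subsets of $\mathcal H$ (condition (C3)) to convergence of the coefficient-error terms in $L^2$ requires a standard truncation of $\tilde\xi$ via Chebyshev's inequality combined with the linear growth from (C1)--(C2).
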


\begin{proof}
The proof is analogous to Theorem 4.6 in \cite{CL_2017}.
\end{proof}

\begin{coro}\label{corL2*}
Under the conditions of \textsl{Theorem} \ref{t3.4.4} the
following statements hold:
\begin{enumerate}
\item
If the functions $\mathcal A\in C(\mathbb R,L(\mathcal H))$ and
$F,G\in$ $ C(\mathbb R\times\mathcal H,\mathcal H)$ are jointly
stationary (respectively, $\tau$-periodic, quasi-periodic with the
spectrum of frequencies $\nu_1,\ldots,\nu_k$, Bohr almost
periodic, almost automorphic, Birkhoff recurrent, Lagrange
stable, Levitan almost periodic, almost recurrent, Poisson
stable), then equation (\ref{eqUAS_0}) has a unique solution
$\varphi \in C_{b}(\mathbb R,L^2(\mathbb P,\mathcal H))$ which is
stationary (respectively, $\tau$-periodic, quasi-periodic with the
spectrum of frequencies $\nu_1,\ldots,\nu_k$, Bohr almost
periodic, almost automorphic, Birkhoff recurrent, Lagrange
stable, Levitan almost periodic, almost recurrent, Poisson stable)
in distribution;
\item
If the functions
$\mathcal A\in C(\mathbb R,L(\mathcal H))$ and
$F,G\in$ $ C(\mathbb R \times \mathcal H,\mathcal H)$ are
Lagrange stable and jointly pseudo-periodic (respectively,
pseudo-re\-cur\-rent), then equation (\ref{eqUAS_0}) has a unique
solution $\varphi \in C_{b}(\mathbb R,L^2(\mathbb P,\mathcal H))$
which is pseudo-periodic (respectively, pseudo-recurrent) in
distribution.
\end{enumerate}
\end{coro}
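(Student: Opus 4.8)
The plan is to reduce both statements to Shcherbakov's comparability method (Theorem~\ref{th1}), applied not to the $L^2$-process $\varphi$ itself but to the associated law-valued function. First I would note that Theorem~\ref{t3.4.4} already supplies the solution and the crucial comparability input: under its hypotheses there is a unique $\varphi := \xi \in C_b(\mathbb R, L^2(\mathbb P, \mathcal H))$, and by part~(2) this solution is strongly compatible in distribution, that is $\mathfrak M_{(\mathcal A,F,G)} \subseteq \tilde{\mathfrak M}_\varphi$. The remaining task is purely one of transferring the recurrence of the coefficients to $\varphi$ in the distributional sense.

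The device I would use is the law-valued function $\Phi \in C(\mathbb R, \mathcal P(\mathcal H))$ given by $\Phi(t) := \mathrm{Law}(\varphi(t))$. This is continuous because $\varphi$ is continuous in $L^2$ and $L^2$-convergence implies convergence in the $\beta$ metric, while $(\mathcal P(\mathcal H), \beta)$ is a complete metric space. By the definitions of $\tilde{\mathfrak N}_\varphi$ and $\tilde{\mathfrak M}_\varphi$, convergence of the shifts of $\varphi$ in distribution uniformly on compact intervals is exactly convergence of the shifts $\Phi^{t_n}$ in the compact-open topology of $C(\mathbb R, \mathcal P(\mathcal H))$; hence $\tilde{\mathfrak N}_\varphi = \mathfrak N_\Phi$ and $\tilde{\mathfrak M}_\varphi = \mathfrak M_\Phi$, where the latter are the Shcherbakov classes of $\Phi$ computed in $C(\mathbb R, \mathcal P(\mathcal H))$. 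With this identification, ``$\varphi$ has property $A$ in distribution'' means precisely ``$\Phi$ has property $A$'', and strong compatibility becomes $\mathfrak M_b \subseteq \mathfrak M_\Phi$ with $b := (\mathcal A, F, G)$, i.e. $\Phi$ is strongly comparable by character of recurrence with $b$.

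Statement~(1) then follows by applying Theorem~\ref{th1} with $\psi = b$ and $\Phi$ in the role of $\varphi$. Since $b$ carries the prescribed recurrence property (Definition~\ref{defF1}), for the classes governed by plain comparability (stationary, $\tau$-periodic, Levitan almost periodic, almost recurrent, Poisson stable) I would first pass from $\mathfrak M_b \subseteq \mathfrak M_\Phi$ to $\mathfrak N_b \subseteq \mathfrak N_\Phi$ via Theorem~\ref{th1}(1) and then invoke Theorem~\ref{th1}(2); for the classes governed by strong comparability (quasi-periodic, Bohr almost periodic, almost automorphic, Birkhoff recurrent, Lagrange stable) I would apply Theorem~\ref{th1}(3) directly. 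In every case $\Phi$ inherits the property, which is the claim. For statement~(2), the coefficients are moreover Lagrange stable and jointly pseudo-periodic (respectively pseudo-recurrent), so Theorem~\ref{th1}(4) applies verbatim and yields the pseudo-periodicity (respectively pseudo-recurrence) of $\Phi$, hence of $\varphi$ in distribution.

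I expect the one genuinely non-mechanical step to be the identification in the second paragraph: verifying that distributional convergence of the shifted processes, uniform on compact intervals, is the same as compact-open convergence of the shifted law-functions, so that Shcherbakov's abstract theorem --- stated for continuous functions valued in a complete metric space --- may legitimately be used with target $(\mathcal P(\mathcal H), \beta)$. Once this translation is secured the transfer of each recurrence type is automatic, and the rest is bookkeeping over the list.
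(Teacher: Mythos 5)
Your proposal is correct and is essentially the argument the paper intends: the paper's own proof is the one-line remark that the corollary follows from Theorems \ref{th1} and \ref{t3.4.4} (together with Remark \ref{remF1}), and your write-up simply makes explicit the key translation step --- identifying the law-valued function $\Phi(t)=\mathcal L(\varphi(t))$ as an element of $C(\mathbb R,\mathcal P(\mathcal H))$ so that $\tilde{\mathfrak M}_{\varphi}=\mathfrak M_{\Phi}$ and Shcherbakov's comparability theorem applies. No substantive difference in route.
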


\begin{proof}
These statements follow from Theorems \ref{th1} and \ref{t3.4.4}
(see also Remark \ref{remF1}).
\end{proof}

\section{Averaging for linear equations}

Let $\varepsilon_{0}$ be some fixed positive number. Consider the equation
\begin{equation}\label{eqALN1}
\d X(t)=\varepsilon (\mathcal A(t)X(t)+f(t))\d t
+\sqrt{\varepsilon}g(t)\d W(t),
\end{equation}
where $A\in C(\mathbb R,L(\mathcal H))$, $f,g\in
C(\mathbb R,L^2(\mathbb P, \mathcal H))$, $0<\varepsilon \le
\varepsilon_0$ and $W$ is a two-sided standard one-dimensional Brownian motion defined on the filtered
probability space $(\Omega,\mathcal F,\mathbb P,\mathcal F_{t})$,
where $\mathcal F_{t}:=\sigma\{W(u): u\le t\}$.

\begin{definition}\label{defAL1}\rm
Let $f:\mathbb R\times (0,\varepsilon_0]\to
\mathfrak B$. Following \cite{KBK} we say that {\em $f(t;\varepsilon)$
integrally converges to $0$} if for any $L>0$ we have
\begin{equation}\label{eqAL1}
\lim\limits_{\varepsilon \to 0}\sup\limits_{|t-s|\le L}
\left|\int_{s}^{t}f(\tau;\varepsilon)\d\tau\right|=0.\nonumber
\end{equation}
If additionally there exists a constant $m>0$ such that
\begin{equation}\label{eqAL2}
\left|f(t;\varepsilon)\right|\le m \ \nonumber
\end{equation}
for any $t\in\mathbb R$ and $0<\varepsilon\le \varepsilon_{0}$,
then we say that $f(t;\varepsilon)$ {\em correctly converges to} $0$ as $\varepsilon \to 0$.
\end{definition}

\begin{remark}\label{remL1.0}\cite[ChIV]{KBK} \rm
If $f\in C(\mathbb R,\mathfrak B)$
and
\begin{equation}\label{AL3}
\lim\limits_{T\to+\infty}\frac{1}{T}
\left|\int_{t}^{t+T}f(s)\d s\right|=0\nonumber
\end{equation}
uniformly with respect to $t\in\mathbb R$, then
$f(t;\varepsilon):=f(\frac{t}{\varepsilon})$ integrally converges
to $0$ as $\varepsilon \to 0$. If additionally the function $f$ is
bounded on $\mathbb R$, then $f(t;\varepsilon)$ correctly
converges to $0$ as $\varepsilon \to 0$.
\end{remark}

Let $\mathcal A\in L(\mathcal H)$.
Denote by $\sigma(\mathcal A)$ the spectrum of $\mathcal A$.
Below we will use the following conditions:
\begin{enumerate}
\item[(\textbf{A1})]
$\mathcal A\in C(\mathbb R,L(\mathcal H))$ and there
exists $\bar{\mathcal A} \in L(\mathcal H)$ such that
\begin{equation}\label{AL4.0}
\lim\limits_{T\to +\infty}\frac{1}{T}\int_{t}^{t+T}
\mathcal A(s)\d s=\bar{\mathcal A}
\end{equation}
uniformly with respect to $t\in\mathbb R$;

\item[(\textbf{A2})]
$f\in C(\mathbb R, L^2(\mathbb P, \mathcal H)$ and
there exists $\bar{f}\in L^2(\mathbb P, \mathcal H)$ such that
\begin{equation}\label{eqAL5}
\lim\limits_{T\to +\infty}\frac{1}{T} \left\|\int_{t}^{t+T}
[f(s)-\bar{f}]\d s\right\|_2=0
\end{equation}
uniformly with respect to $t\in\mathbb R$;

\item[(\textbf{A3})]
$g\in C(\mathbb R, L^2(\mathbb P, \mathcal H)$ and there exists
$\bar{g}\in  L^2(\mathbb P, \mathcal H)$ such that
\begin{equation}\label{eqAL6}
\lim\limits_{T\to
+\infty}\frac{1}{T}\int_{t}^{t+T}\mathbb E |g(\tau)-\bar{g}|^{2}\d\tau=0
\end{equation}
uniformly with respect to $t\in\mathbb R$.
\end{enumerate}

Denote by $\Psi$ the family of all decreasing, positive bounded
functions $\psi :\mathbb R_{+}\to\mathbb R_{+}$ with
$\lim\limits_{t\to +\infty}\psi(t)=0$.

\begin{lemma}\label{lC1} Let $l>0$ and $\psi\in \Psi$, then
\begin{equation*}
\lim\limits_{\varepsilon \to 0}\sup\limits_{0\le \tau\le
l}\tau\psi(\frac{\tau}{\varepsilon})=0 .
\end{equation*}
\end{lemma}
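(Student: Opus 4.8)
The plan is to exploit the two defining features of $\psi\in\Psi$ separately: its boundedness, which controls the factor $\psi(\tau/\varepsilon)$ where $\tau$ is small, and its decay at infinity, which controls it where $\tau$ is bounded away from $0$. First I would set $M:=\psi(0)=\sup_{s\ge 0}\psi(s)$; this is finite and positive, and since $\psi$ is decreasing it is a global upper bound, $\psi(s)\le M$ for all $s\ge 0$. The idea is then to split the interval $[0,l]$ at a threshold $\delta\in(0,l)$, to be chosen later, and estimate $\tau\,\psi(\tau/\varepsilon)$ by a different mechanism on each of the two pieces.

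On the short piece $0\le\tau\le\delta$ I would simply use $\psi\le M$ to get $\tau\,\psi(\tau/\varepsilon)\le\delta M$, a bound that is uniform in $\varepsilon$ and small once $\delta$ is small. On the remaining piece $\delta\le\tau\le l$ I would instead invoke monotonicity: since $\tau/\varepsilon\ge\delta/\varepsilon$ and $\psi$ is decreasing, we have $\psi(\tau/\varepsilon)\le\psi(\delta/\varepsilon)$, and therefore $\tau\,\psi(\tau/\varepsilon)\le l\,\psi(\delta/\varepsilon)$. Taking the supremum over all of $[0,l]$ and combining the two regimes yields $\sup_{0\le\tau\le l}\tau\,\psi(\tau/\varepsilon)\le\max\{\delta M,\,l\,\psi(\delta/\varepsilon)\}$.

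It then remains to do the $\varepsilon$--$\delta$ bookkeeping. Given $\eta>0$, I would first fix $\delta>0$ small enough that $\delta M<\eta$; with $\delta$ now frozen, $\delta/\varepsilon\to+\infty$ as $\varepsilon\to 0$, so $\psi(\delta/\varepsilon)\to 0$ by the defining property of the class $\Psi$, and hence there exists $\varepsilon_*>0$ such that $l\,\psi(\delta/\varepsilon)<\eta$ for all $0<\varepsilon<\varepsilon_*$. For such $\varepsilon$ both entries of the maximum are below $\eta$, which proves the stated limit. The only point demanding attention is the order of quantifiers: $\delta$ must be chosen and held fixed before $\varepsilon$ is sent to zero, since the decay $\psi(\delta/\varepsilon)\to 0$ depends on $\delta$ being a fixed positive constant. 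Beyond this routine sequencing I anticipate no genuine obstacle, as the estimate is an elementary two-regime splitting.
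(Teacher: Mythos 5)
Your proof is correct and follows essentially the same two-regime splitting as the paper: bound $\tau\psi(\tau/\varepsilon)$ by $\psi(0)$ times a small length near $\tau=0$ and by $l$ times a far-out value of $\psi$ on the rest. The only cosmetic difference is that the paper splits at the $\varepsilon$-dependent point $\varepsilon^{\nu}$ with $\nu\in(0,1)$, so that both terms $\varepsilon^{\nu}\psi(0)+l\psi(\varepsilon^{\nu-1})$ vanish in a single limit, whereas you split at a fixed $\delta$ and handle the order of quantifiers explicitly; both are valid.
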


\begin{proof} Let $\varepsilon$ and $l$ be two arbitrary positive
numbers, $\nu \in(0,1)$ and $\psi\in \Psi$, then we have
\begin{equation*}\label{eqC1_2}
\sup\limits_{0\le \tau\le l}\tau\psi(\frac{\tau}{\varepsilon})
\le\sup\limits_{0\le \tau\le\varepsilon^{\nu}}\tau\psi(\frac{\tau}{\varepsilon})
+\sup\limits_{\varepsilon^{\nu}\le \tau\le l}\tau\psi(\frac{\tau}{\varepsilon})
\le\varepsilon^{\nu}\psi(0)+l\psi(\varepsilon^{\nu -1}).
\end{equation*}
Letting $\varepsilon \to 0$ we obtain the required result.
\end{proof}

\begin{remark}\label{remA1} \rm
\begin{enumerate}
  \item
  By Lemma 2 in \cite{CD_2004} equality
(\ref{AL4.0}) holds if and only if there exists a function
$\omega \in\Psi$ satisfying
\begin{equation}\label{AL4.1}
\left\|\frac{1}{T}\int_{t}^{t+T}\mathcal
A(s)\d s-\bar{\mathcal A}\right\|\le \omega(T)\nonumber
\end{equation}
for any $T>0$ and $t\in\mathbb R$.
  \item
  Similarly equality (\ref{eqAL5}) (respectively, equality (\ref{eqAL6})) holds if and only if
there exists a function $\omega_{1} \in\Psi$ (respectively,
$\omega_{2}\in\Psi$) satisfying
\begin{equation}\label{AL5.1}
\frac{1}{T}\left\|\int_{t}^{t+T} [f(s)-\bar{f}]\d s\right\|_2
\le\omega_{1}(T)\nonumber
\end{equation}
(respectively,
$\frac{1}{T}\int_{t}^{t+T} \mathbb E |g(\tau)-\bar{g}|^{2}\d\tau \le\omega_{2}(T)$)
for any $T>0$ and $t\in\mathbb R$.
\end{enumerate}
\end{remark}

\begin{theorem}\label{thAL1}\cite[ChIV]{KBK}
Suppose that $\mathcal A\in C_{b}(\mathbb R,L(\mathfrak B))$ and
\begin{equation}\label{AL4}
\lim\limits_{T \to +\infty}\frac{1}{T}\int_{t}^{t+T}\mathcal
A(s)\d s=\bar{\mathcal A}\nonumber
\end{equation}
uniformly with respect to $t\in\mathbb R$ and the operator
$\bar{\mathcal A}$ is Hurwitz, i.e. $\mathcal{R}e\ \lambda <0$
for any $\lambda \in \sigma(\bar{\mathcal A})$.

Then the following statements hold:
\begin{enumerate}
\item there exists a positive constant $\alpha \le \varepsilon_{0}$
      such that the equation
\begin{equation}\label{eqAL5.1}
\d x(\tau)=\mathcal A_{\varepsilon }(\tau)x(\tau)\d\tau,\nonumber
\end{equation}
  where $\mathcal A_{\varepsilon }(\tau):=\mathcal
  A(\frac{\tau}{\varepsilon})$ for any $\tau\in\mathbb R$, is
  uniformly asymptotically stable for any $0<\varepsilon\le \alpha$.
  Moreover there are constants $\mathcal N>0$ and $\nu>0$ such that
\begin{equation}\label{eqAL6.0}
\|G_{\mathcal A_{\varepsilon}}(\tau,\tau_0)\|
\le \mathcal Ne^{-\nu(\tau-\tau_{0})}\nonumber
\end{equation}
for any $\tau \ge\tau_{0}$ and $0<\varepsilon \le\alpha$;
\item there exists $\gamma_{0}>0$ such that
\begin{equation}\label{eqAL6.1}
\lim\limits_{\varepsilon \to 0}
\sup\limits_{(\tau\ge \tau_{0};~\tau,\tau_{0}\in \mathbb R)}
e^{\gamma_0(\tau-\tau_{0})}\|G_{\mathcal A_{\varepsilon}}(\tau,\tau_0)
- G_{\bar{\mathcal A}}(\tau,\tau_0)\|=0\ .
\end{equation}
\end{enumerate}
\end{theorem}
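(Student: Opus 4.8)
The plan is to compare the propagator $G_{\mathcal A_\varepsilon}(\tau,\tau_0)$ with the constant--coefficient propagator $G_{\bar{\mathcal A}}(\tau,\tau_0)=e^{\bar{\mathcal A}(\tau-\tau_0)}$ (the Cauchy operator of the autonomous equation $\dot x=\bar{\mathcal A}x$) through the variation--of--constants (Duhamel) identity
\begin{equation*}
G_{\mathcal A_\varepsilon}(\tau,\tau_0)=e^{\bar{\mathcal A}(\tau-\tau_0)}+\int_{\tau_0}^{\tau}e^{\bar{\mathcal A}(\tau-s)}\left(\mathcal A_\varepsilon(s)-\bar{\mathcal A}\right)G_{\mathcal A_\varepsilon}(s,\tau_0)\,\d s,
\end{equation*}
obtained by treating $(\mathcal A_\varepsilon-\bar{\mathcal A})G_{\mathcal A_\varepsilon}$ as an inhomogeneity. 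Since $\bar{\mathcal A}$ is Hurwitz there are $K\ge 1$, $\beta>0$ with $\|e^{\bar{\mathcal A}t}\|\le Ke^{-\beta t}$ for $t\ge0$, and since $\mathcal A\in C_b(\mathbb R,L(\mathfrak B))$ we set $a:=\sup_t\|\mathcal A(t)\|$. By Remark \ref{remA1} the averaging hypothesis furnishes $\omega\in\Psi$ with $\|\frac1T\int_t^{t+T}\mathcal A(s)\,\d s-\bar{\mathcal A}\|\le\omega(T)$ for all $t\in\mathbb R$ and $T>0$. Writing $\Phi_\varepsilon(s):=\int_{\tau_0}^{s}(\mathcal A_\varepsilon(r)-\bar{\mathcal A})\,\d r$ and rescaling $r\mapsto r/\varepsilon$ gives the decisive estimate $\|\Phi_\varepsilon(s)\|\le(s-\tau_0)\,\omega((s-\tau_0)/\varepsilon)$, so that on any fixed window $0\le s-\tau_0\le h$ Lemma \ref{lC1} yields $\eta(\varepsilon):=\sup_{\tau_0}\sup_{0\le s-\tau_0\le h}\|\Phi_\varepsilon(s)\|\to0$ as $\varepsilon\to0$, uniformly in the base point $\tau_0$.

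For statement (1) I would argue by a window--patching (discretization) scheme. Fix $h>0$ so large that $\|e^{\bar{\mathcal A}h}\|\le Ke^{-\beta h}<\tfrac12$. On the single window $[\tau_0,\tau_0+h]$ Gronwall gives $\|G_{\mathcal A_\varepsilon}(s,\tau_0)\|\le e^{ah}$, and integrating the Duhamel integral by parts with $\mathcal A_\varepsilon-\bar{\mathcal A}=\Phi_\varepsilon'$ (using $\Phi_\varepsilon(\tau_0)=0$, $\tfrac{\d}{\d s}e^{\bar{\mathcal A}(\tau_0+h-s)}=-\bar{\mathcal A}e^{\bar{\mathcal A}(\tau_0+h-s)}$ and $\tfrac{\d}{\d s}G_{\mathcal A_\varepsilon}(s,\tau_0)=\mathcal A_\varepsilon(s)G_{\mathcal A_\varepsilon}(s,\tau_0)$) produces
\begin{equation*}
\|G_{\mathcal A_\varepsilon}(\tau_0+h,\tau_0)-e^{\bar{\mathcal A}h}\|\le C(h)\,\eta(\varepsilon),
\end{equation*}
with $C(h)$ depending only on $h,K,a,\|\bar{\mathcal A}\|$. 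Hence there is $\alpha\in(0,\varepsilon_0]$ with $\|G_{\mathcal A_\varepsilon}(\tau_0+h,\tau_0)\|\le q'<1$ for all $\tau_0$ and all $0<\varepsilon\le\alpha$. The cocycle identity $G_{\mathcal A_\varepsilon}(\tau_0+nh,\tau_0)=\prod_{j=0}^{n-1}G_{\mathcal A_\varepsilon}(\tau_0+(j+1)h,\tau_0+jh)$, together with this uniform--in--base--point bound, gives $\|G_{\mathcal A_\varepsilon}(\tau_0+nh,\tau_0)\|\le(q')^n$; interpolating over the remainder $\tau-\tau_0=nh+r$ with $0\le r<h$ and $\|G_{\mathcal A_\varepsilon}(\tau,\tau_0+nh)\|\le e^{ah}$ yields $\|G_{\mathcal A_\varepsilon}(\tau,\tau_0)\|\le\mathcal N e^{-\nu(\tau-\tau_0)}$ with $\nu:=-h^{-1}\ln q'>0$ and $\mathcal N,\nu$ independent of $\varepsilon$.

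For statement (2), pick any $\gamma_0\in(0,\min\{\beta,\nu\})$ and apply the same integration by parts to the Duhamel integral, but now over the full interval $[\tau_0,\tau]$, giving
\begin{equation*}
G_{\mathcal A_\varepsilon}(\tau,\tau_0)-e^{\bar{\mathcal A}(\tau-\tau_0)}=\Phi_\varepsilon(\tau)G_{\mathcal A_\varepsilon}(\tau,\tau_0)+\int_{\tau_0}^{\tau}\bar{\mathcal A}e^{\bar{\mathcal A}(\tau-s)}\Phi_\varepsilon(s)G_{\mathcal A_\varepsilon}(s,\tau_0)\,\d s-\int_{\tau_0}^{\tau}e^{\bar{\mathcal A}(\tau-s)}\Phi_\varepsilon(s)\mathcal A_\varepsilon(s)G_{\mathcal A_\varepsilon}(s,\tau_0)\,\d s.
\end{equation*}
Using $\|\Phi_\varepsilon(s)\|\le(s-\tau_0)\omega((s-\tau_0)/\varepsilon)$, the bound $\|G_{\mathcal A_\varepsilon}(s,\tau_0)\|\le\mathcal N e^{-\nu(s-\tau_0)}$ from (1), and $\|e^{\bar{\mathcal A}(\tau-s)}\|\le Ke^{-\beta(\tau-s)}$, each term multiplied by $e^{\gamma_0(\tau-\tau_0)}$ is controlled by a factor decaying exponentially in $u:=\tau-\tau_0$ (at rate $\min\{\beta,\nu\}-\gamma_0>0$) times a factor built from $u\,\omega(u/\varepsilon)$, respectively $\int_0^{u}v\,\omega(v/\varepsilon)\,\d v$. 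Splitting the supremum over $u$ into a bounded part $[0,L]$, on which Lemma \ref{lC1} forces $\sup_{0\le v\le L}v\,\omega(v/\varepsilon)\to0$, and a tail $(L,\infty)$, on which the exponential factor is small uniformly in $\varepsilon$ (since $\omega\le\omega(0)$), shows that $\sup_{\tau\ge\tau_0}e^{\gamma_0(\tau-\tau_0)}\|G_{\mathcal A_\varepsilon}(\tau,\tau_0)-G_{\bar{\mathcal A}}(\tau,\tau_0)\|\to0$ as $\varepsilon\to0$, uniformly in $\tau_0$, which is precisely \eqref{eqAL6.1}.

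The crux throughout is that $\mathcal A_\varepsilon(s)-\bar{\mathcal A}$ does not decay pointwise, so the estimates cannot be closed by bounding the integrand directly; the whole argument rests on transferring the oscillation into the primitive $\Phi_\varepsilon$ by integration by parts and then invoking the averaging estimate through Lemma \ref{lC1}. I expect the main obstacle to be the bookkeeping that shows the window constant $C(h)$ and the limit constants $\mathcal N,\nu,\gamma_0$ may be chosen uniformly in both $\varepsilon$ and the base point $\tau_0$; this uniformity is exactly what the uniform--in--$t$ averaging hypothesis (via the single function $\omega\in\Psi$) is there to supply.
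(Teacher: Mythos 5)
Your proof is correct. Note that the paper does not prove Theorem \ref{thAL1} at all: it is quoted from \cite[ChIV]{KBK}, and Remark \ref{remKBK} only asserts that the general (infinite-dimensional, non-almost-periodic) case follows ``with slight modifications of the reasoning'' there. Your argument --- the Duhamel identity, transferring the non-decaying oscillation $\mathcal A_\varepsilon-\bar{\mathcal A}$ into its primitive $\Phi_\varepsilon$ by integration by parts, controlling $\Phi_\varepsilon$ on bounded windows through the uniform averaging hypothesis and Lemma \ref{lC1}, a window-patching/cocycle argument for the $\varepsilon$-uniform exponential bound in (i), and the split of the supremum into a bounded part plus an exponentially suppressed tail for (ii) --- is exactly that classical reasoning, and it is the same integration-by-parts device the authors themselves use in Lemma \ref{lG1} and in the proofs of Theorems \ref{thAL2} and \ref{thG}; so your write-up in effect supplies the proof the paper delegates to the reference. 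The two points that carry the real weight are ones you already flag: all constants ($C(h)$, $\alpha$, $q'$, $\mathcal N$, $\nu$) must be independent of the base point $\tau_0$, which holds because the majorant $\omega\in\Psi$ from Remark \ref{remA1} is independent of $t$; and in (ii) one must take $\gamma_0<\min\{\beta,\nu\}$ strictly, since $u\,\omega(u/\varepsilon)$ and $\int_0^u v\,\omega(v/\varepsilon)\,\d v$ may grow in $u=\tau-\tau_0$ and are tamed on the tail $u>L$ only by the leftover exponential decay, uniformly in $\varepsilon$.
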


\begin{remark}\label{remKBK} \rm
(i) Note that Theorem \ref{thAL1} was proved for finite-dimensional
almost periodic equations (this means that the matrix-function
$\mathcal A(\cdot)$ is almost periodic). For the proof for infinite-dimensional
almost periodic systems see \cite[ChXI]{Lev-Zhi}.

(ii) It is not difficult to show that Theorem \ref{thAL1} remains
true in general case (see above) and can be proved with slight
modifications of the reasoning from \cite[ChIV]{KBK}.

(iii) Under the conditions of Theorem \ref{thAL1} there are positive
constants $\alpha, \mathcal N$ and $\nu$ so that
\begin{equation}\label{uab}
\|G_{\mathcal A_{\varepsilon}}(t,\tau)\|,
\|G_{\bar{\mathcal A}}(t,\tau)\|\le \mathcal Ne^{-\nu (t-\tau)}
\end{equation}
for any $0<\varepsilon \le \alpha$ and $t\ge\tau$.
\end{remark}

\begin{lemma}\label{lG1}
Let $f_{\varepsilon}\in C(\mathbb R, \mathfrak B)$ for
$\varepsilon \in (0,\alpha]$ be functions satisfying
the following conditions:
\begin{enumerate}
\item there exists a positive constant $A$ such that
$|f_{\varepsilon}(t)|\le A$ for any $t\in\mathbb R$ and
$\varepsilon\in (0,\alpha]$;
\item
for any $l>0$
\begin{equation}\label{eqlG1}
\lim\limits_{\varepsilon \to 0}
\sup\limits_{|s|\le l,\ t\in\mathbb R}
\left|\int_{t}^{t+s}f_{\varepsilon}(\sigma)\d\sigma\right|=0.
\end{equation}
\end{enumerate}

Then for any $\nu >0$ we have
\begin{equation}\label{eqlG2}
\lim\limits_{\varepsilon \to 0}\sup\limits_{\ t\in\mathbb R}
\left|\int_{-\infty}^{t}e^{-\nu(t-\tau)}f_{\varepsilon}(\tau)
\d\tau\right|=0.\nonumber
\end{equation}
\end{lemma}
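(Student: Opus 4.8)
The plan is to reduce the estimate over the half-line $(-\infty,t]$ to the bounded-window hypothesis (\ref{eqlG1}) by discretizing the exponential kernel. After the substitution $\tau=t-u$ the quantity to control becomes $\int_0^\infty e^{-\nu u}f_\varepsilon(t-u)\,\d u$, and it is convenient to abbreviate $\delta_\varepsilon(l):=\sup_{|s|\le l,\,t\in\mathbb R}\bigl|\int_t^{t+s}f_\varepsilon(\sigma)\,\d\sigma\bigr|$, so that hypothesis (\ref{eqlG1}) reads $\lim_{\varepsilon\to0}\delta_\varepsilon(l)=0$ for each fixed $l>0$. The guiding idea is that on a short window the integral of $f_\varepsilon$ is uniformly small by (\ref{eqlG1}), while the exponential weight is nearly constant on that window and decays geometrically from one window to the next; summing a geometric series then yields the conclusion.

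Concretely, I would fix $l>0$, decompose $(-\infty,t]=\bigcup_{k\ge0}I_k$ with $I_k:=(t-(k+1)l,\,t-kl]$, and on each $I_k$ replace the kernel $e^{-\nu(t-\tau)}$ by its value $e^{-\nu kl}$ at the right endpoint. This splits $\int_{I_k}e^{-\nu(t-\tau)}f_\varepsilon(\tau)\,\d\tau$ into a main term $e^{-\nu kl}\int_{I_k}f_\varepsilon(\tau)\,\d\tau$ and a remainder $\int_{I_k}\bigl(e^{-\nu(t-\tau)}-e^{-\nu kl}\bigr)f_\varepsilon(\tau)\,\d\tau$. For the main term, $I_k$ has length $l$, so $|\int_{I_k}f_\varepsilon|\le\delta_\varepsilon(l)$ and the total contribution is bounded by $\sum_{k\ge0}e^{-\nu kl}\delta_\varepsilon(l)=\delta_\varepsilon(l)/(1-e^{-\nu l})$. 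For the remainder, on $I_k$ one has $0\le e^{-\nu kl}-e^{-\nu(t-\tau)}\le e^{-\nu kl}(1-e^{-\nu l})$, so using $|f_\varepsilon|\le A$ and $|I_k|=l$ each term is at most $e^{-\nu kl}(1-e^{-\nu l})Al$, and summing the geometric series gives a total bound of $Al$, independent of $\varepsilon$ and $t$.

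Combining the two pieces, I obtain $\sup_{t\in\mathbb R}\bigl|\int_{-\infty}^t e^{-\nu(t-\tau)}f_\varepsilon(\tau)\,\d\tau\bigr|\le \delta_\varepsilon(l)/(1-e^{-\nu l})+Al$ for every $l>0$ and every $\varepsilon\in(0,\alpha]$. The conclusion follows by the usual two-step choice: given $\eta>0$, first fix $l$ so small that $Al<\eta/2$, and then, with $l$ now frozen, let $\varepsilon\to0$ so that $\delta_\varepsilon(l)/(1-e^{-\nu l})<\eta/2$ by hypothesis (\ref{eqlG1}). The one point that requires care—and the only real obstacle—is precisely this ordering of quantifiers: shrinking $l$ to kill the $Al$ term simultaneously blows up the factor $1/(1-e^{-\nu l})\sim 1/(\nu l)$, so $l$ must be chosen and fixed \emph{before} sending $\varepsilon\to0$. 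The boundedness hypothesis (1) is exactly what makes the $\varepsilon$-independent remainder $Al$ available and thereby forces this order of limits.
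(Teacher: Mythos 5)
Your proof is correct, but it takes a genuinely different route from the paper's. The paper writes $f_{\varepsilon}(t+s)=\frac{\d}{\d s}\int_{t}^{t+s}f_{\varepsilon}(\sigma)\,\d\sigma$ and integrates by parts, arriving at $-\int_{-\infty}^{0}\nu e^{\nu s}\bigl(\int_{t}^{t+s}f_{\varepsilon}(\sigma)\,\d\sigma\bigr)\,\d s$; it then splits this at $s=-l$, bounds the far tail by $Ae^{-\nu l}(l+\frac{1}{\nu})$ using hypothesis (i) and the near part by $(1-e^{-\nu l})\,\delta_{\varepsilon}(l)$ using hypothesis (ii) (in your notation), and finally sends $l\to+\infty$ to kill the $\varepsilon$-independent error. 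You instead discretize the kernel on blocks of length $l$, so that hypothesis (ii) controls \emph{every} block, summed against the geometric weights $e^{-\nu kl}$ to give $\delta_{\varepsilon}(l)/(1-e^{-\nu l})$, while hypothesis (i) controls the kernel-approximation error $Al$, which you kill by sending $l\to 0$. The two arguments are dual in the role of the auxiliary parameter $l$ (mesh size versus cutoff, $l\to 0$ versus $l\to\infty$) and in which hypothesis handles which term, but both hinge on the same quantifier ordering --- fix $l$, then let $\varepsilon\to 0$ --- which you correctly identify as the crux. Your version is essentially a discrete Abel summation where the paper uses the continuous (integration-by-parts) one; neither is stronger, and your final bound $\delta_{\varepsilon}(l)/(1-e^{-\nu l})+Al$ is exactly as serviceable as the paper's $(1-e^{-\nu l})\,\delta_{\varepsilon}(l)+Ae^{-\nu l}(l+\frac{1}{\nu})$.
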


\begin{proof}
To estimate the integral
$$
I(t,\varepsilon):=\left|\int_{-\infty}^{t}e^{-\nu
(t-\tau)}f_{\varepsilon}(\tau)\d\tau\right|
$$
we make the change $\tau -t=s$, then
\begin{equation}\label{eqlG3}
\int_{-\infty}^{t}e^{-\nu (t-\tau)}f_{\varepsilon}(\tau)\d\tau
=\int_{-\infty}^{0}e^{\nu s}f_{\varepsilon}(t+s)\d s
=\int_{-\infty}^{0}e^{\nu s}\frac{\d~}{\d s}\left(
\int_{t}^{t+s}f_{\varepsilon}(\sigma)\d\sigma\right) \d s .
\end{equation}

Since
$$
\left|e^{\nu s}\int_{t}^{t+s}f_{\varepsilon}(\sigma)\d\sigma\right|
\le Ae^{\nu s}|s|
$$
for any $s<0$, we have
\begin{equation}\label{eqlG3.1}
\lim\limits_{s\to -\infty}e^{\nu
s}\int_{t}^{t+s}f_{\varepsilon}(\sigma)\d\sigma =0\ .
\end{equation}
Integrating by parts and taking into consideration (\ref{eqlG3.1})
we obtain
\begin{align}\label{eqlG4}
\int_{-\infty}^{0}e^{\nu s}\frac{\d~}{\d s}\left(
\int_{t}^{t+s}f_{\varepsilon}(\sigma)\d\sigma\right)\d s
&
= - \int_{-\infty}^{0}\nu e^{\nu s}
\int_{t}^{t+s}f_{\varepsilon}(\sigma)\d\sigma \d s  \nonumber \\
&
=- \int_{-\infty}^{-l}\nu e^{\nu s}
\int_{t}^{t+s}f_{\varepsilon}(\sigma)\d\sigma \d s
-\int_{-l}^{0}\nu e^{\nu s}
\int_{t}^{t+s}f_{\varepsilon}(\sigma)\d\sigma \d s\ .
\end{align}
Note that
\begin{equation}\label{eqlG5}
\left|- \int_{-\infty}^{-l}\nu e^{\nu s}
\int_{t}^{t+s}f_{\varepsilon}(\sigma)\d\sigma \d s\right|
\le Ae^{-\nu l}\left(l+\frac{1}{\nu}\right)
\end{equation}
and
\begin{equation}\label{eqlG6}
\left|- \int_{-l}^{0}\nu e^{\nu s}
\int_{t}^{t+s}f_{\varepsilon}(\sigma)\d\sigma \d s\right|
\le (1-e^{-\nu l})\sup\limits_{|s|\le l,\ t\in\mathbb R}
\left|\int_{t}^{t+s}f_{\varepsilon}(\sigma)\d\sigma\right|.
\end{equation}

By (\ref{eqlG3})--(\ref{eqlG6}) we get
\begin{equation*}\label{eqlG7}
\left|\int_{-\infty}^{t}e^{-\nu (t-\tau)}f_{\varepsilon}(\tau)\d\tau\right|
\le Ae^{-\nu l}\left(l+\frac{1}{\nu}\right)
+ (1-e^{-\nu l})\sup\limits_{|s|\le l,\ t\in\mathbb R}
\left|\int_{t}^{t+s}f_{\varepsilon}(\sigma)\d\sigma\right|.
\end{equation*}
Then
\begin{equation}\label{eqlG9}
\sup\limits_{t\in\mathbb R}I(t,\varepsilon)
\le Ae^{-\nu l}\left(l+\frac{1}{\nu}\right) + (1-e^{-\nu l})
\sup\limits_{|s|\le l,\ t\in\mathbb R}
\left|\int_{t}^{t+s}f_{\varepsilon}(\sigma)\d\sigma\right| .
\end{equation}
Since
\begin{equation}\label{eqlG8}
\lim\limits_{\varepsilon \to 0}
\sup\limits_{|s|\le l,\ t\in\mathbb R}
\left|\int_{t}^{t+s}f_{\varepsilon}(\sigma)
\d\sigma\right|=0\nonumber
\end{equation}
for any $l>0$, letting  $\varepsilon \to 0$
in (\ref{eqlG9})  we have
\begin{equation*}\label{eqlG10}
\limsup\limits_{\varepsilon \to 0}\sup\limits_{t\in\mathbb
R}I(t,\varepsilon)\le  Ae^{-\nu l}\left(l+\frac{1}{\nu}\right).
\end{equation*}
Since $l$ is arbitrary, it follows that
\begin{equation}\label{eqlG11}
\lim\limits_{\varepsilon \to 0}\sup\limits_{t\in\mathbb
R}I(t,\varepsilon)=0.\nonumber
\end{equation}
The proof is complete.
\end{proof}

\begin{remark}\rm
If the function $f\in C_b(\mathbb R,\mathfrak B)$ and $\bar{f}\in \mathfrak B$ are such that
\begin{equation}\label{ave}
\lim\limits_{L\to +\infty}\frac{1}{L}\int_{t}^{t+L}
[f(s)-\bar{f}]\d s=0
\end{equation}
uniformly with respect to $t\in\mathbb R$, then the function $f_{\varepsilon}(\sigma):= f(\frac{\sigma}{\varepsilon})-\bar f$
satisfies the conditions of Lemma \ref{lG1}. Indeed, note that
\begin{align*}
\int_{t}^{t+s} f_{\varepsilon}(\sigma) \d \sigma = \int_{t}^{t+s} [f(\frac{\sigma}{\varepsilon})-\bar f] \d \sigma
=s\cdot\frac{\varepsilon}{s}\int_{t/\varepsilon}^{t/\varepsilon+s/\varepsilon} [f(\tilde\sigma)-\bar f] \d \tilde\sigma,
\end{align*}
so the condition \eqref{eqlG1} of Lemma \ref{lG1} holds by \eqref{ave}. Similarly, if the function $g$ in (A3) is $L^2$-bounded,
then the function $f_{\varepsilon}(\sigma):= \mathbb E|g(\frac{\sigma}{\varepsilon})-\bar g|^2$ satisfies as well the conditions
of Lemma \ref{lG1}.
\end{remark}

Let $W_{\varepsilon}(t):=\sqrt{\varepsilon}W(\frac{t}{\varepsilon})$ for $t\in\R$. Then
$W_\varepsilon$ is also a Brownian motion with the same distribution as $W$.

\begin{theorem}\label{thAL2}
Suppose that
$\mathcal A\in C_{b}(\mathbb R,L(\mathcal H)),
\ f,g\in C_{b}(\mathbb R,L^{2}(\mathbb P,\mathcal H))$
and conditions (A1)--(A3) are fulfilled.
Suppose further that $\bar{\mathcal A}$ in (A1) is Hurwitz such that \eqref{eqAL6.1}--\eqref{uab} holds.
Then we have the following conclusions:
\begin{enumerate}
\item equation
\begin{equation}\label{eqL8*}
\d X_{\varepsilon}(t)=(\mathcal A_{\varepsilon}(t)
X_{\varepsilon}(t)+f_{\varepsilon}(t))\d t
+g_{\varepsilon}(t)\d W_{\varepsilon}(t)
\end{equation}
has a unique bounded solution $\varphi_{\varepsilon}\in
C_{b}(\mathbb R,L^{2}(\mathbb P,\mathcal H))$ defined by equality
\begin{equation}\label{eqL7.1*}
\varphi_{\varepsilon}(t)= \int_{-\infty}^{t}G_{\mathcal
A_{\varepsilon}}(t,\tau)f_{\varepsilon}(\tau)\d\tau
+\int_{-\infty}^{t}G_{\mathcal
A_{\varepsilon}}(t,\tau)g_{\varepsilon}(\tau)\d W_{\varepsilon}(\tau)\
\nonumber
\end{equation}
and it is strongly compatible in distribution (i.e. $\mathfrak M_{(\mathcal
A_{\varepsilon},f_{\varepsilon},g_{\varepsilon})}\subseteq
\tilde{\mathfrak M}_{\varphi_{\varepsilon}}$) and $\mathfrak
M^{u}_{(\mathcal
A_{\varepsilon},f_{\varepsilon},g_{\varepsilon})}\subseteq
\tilde{\mathfrak M}^{u}_{\varphi_{\varepsilon}}$, where $\mathcal
A_{\varepsilon}(t):=\mathcal A(\frac{t}{\varepsilon})$, $f_{\varepsilon}(t):=f(\frac{t}{\varepsilon})$ and
$g_{\varepsilon}(t):=g(\frac{t}{\varepsilon})$ for $t\in\mathbb R$;

\item equation
\begin{equation}\label{eqL8}
\d X_{\varepsilon}(t)=(\mathcal A_{\varepsilon}(t)
X_{\varepsilon}(t)+f_{\varepsilon}(t))\d t
+g_{\varepsilon}(t)\d W(t)\nonumber
\end{equation}
has a unique bounded solution $\phi_{\varepsilon}\in C_{b}(\mathbb
R,L^{2}(\mathbb P,\mathcal H))$ defined by equality
\begin{equation}\label{eqL7.1}
\phi_{\varepsilon}(t)= \int_{-\infty}^{t}G_{\mathcal
A_{\varepsilon}}(t,\tau)f_{\varepsilon}(\tau)\d\tau
+\int_{-\infty}^{t}G_{\mathcal
A_{\varepsilon}}(t,\tau)g_{\varepsilon}(\tau)\d W(\tau);
\end{equation}

\item
\begin{equation}\label{eqL9}
\lim\limits_{\varepsilon \to 0}\sup\limits_{t\in\mathbb R}\mathbb
E|\phi_{\varepsilon}(t)-\bar{\phi}(t)|^2=0,
\end{equation}
where $\bar{\phi}$ is the unique stationary solution of equation
\begin{equation}\label{eqL10}
\d X(t)=(\bar{\mathcal A}X(t)+\bar{f})\d t +\bar{g}\d W(t);
\end{equation}

\item for any $\varepsilon \in (0,\alpha]$ equation (\ref{eqALN1})
has a unique solution $\varphi_{\varepsilon}\in C_{b}(\mathbb
R,L^{2}(\mathbb P,\mathcal H))$ and it is strongly compatible in
distribution (i.e. $\mathfrak M_{(\mathcal A,f,g)}\subseteq
\tilde{\mathfrak M}_{\varphi_{\varepsilon}}$) and $\mathfrak
M_{(\mathcal A,f,g)}^{u}\subseteq \tilde{\mathfrak
M}_{\varphi_{\varepsilon}}^{u}$;

\item
\begin{equation}\label{eqL9.1}
\lim\limits_{\varepsilon \to 0}\sup\limits_{t\in\mathbb R}
\beta(\mathcal
L(\varphi_{\varepsilon}(\frac{t}{\varepsilon})),
\mathcal L(\bar{\phi}(t))) =0,\nonumber
\end{equation}
where $\mathcal L(X)$ denotes the law of random variable $X$.
\end{enumerate}
\end{theorem}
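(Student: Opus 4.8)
The plan is to read off the existence statements (i), (ii) and (iv) from Theorem \ref{t3.4.4}, to prove the quantitative convergence (iii) directly, and then to deduce (v) from (iii) via the time rescaling linking \eqref{eqALN1}, \eqref{eqL8*} and \eqref{eqL8}. For the fast equations \eqref{eqL8*} and \eqref{eqL8} the coefficients $f_\varepsilon,g_\varepsilon$ are state-independent, so in the notation of Definition \ref{defL1} one has $Lip(F)=Lip(G)=0$, i.e. $L=0$, and the smallness hypotheses on $L$ in Theorem \ref{t3.4.4} hold trivially; also (C1) holds with $M=\max(\|f\|_\infty,\|g\|_\infty)$. By Theorem \ref{thAL1}(1) and \eqref{uab}, for every $0<\varepsilon\le\alpha$ the homogeneous equation $\dot x=\mathcal A_\varepsilon(t)x$ is uniformly asymptotically stable with constants $\mathcal N,\nu$ independent of $\varepsilon$, while $\sup_t\|\mathcal A_\varepsilon(t)\|=\sup_t\|\mathcal A(t/\varepsilon)\|\le\|\mathcal A\|_\infty<\infty$. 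Hence Theorem \ref{t3.4.4} (with the obvious modification for the $x$-independent random coefficients) yields (i) and (ii): the unique bounded solutions $\varphi_\varepsilon$ and $\phi_\varepsilon$ given by \eqref{eqL7.1*} and \eqref{eqL7.1}, together with the strong compatibility in distribution and the $\mathfrak M^u$-inclusions. For (iv) I would apply Theorem \ref{t3.4.4} (equivalently Corollary \ref{corL2*}) directly to \eqref{eqALN1}: the substitution $s\mapsto\varepsilon s$ gives $G_{\varepsilon\mathcal A}(s,s_0)=G_{\mathcal A_\varepsilon}(\varepsilon s,\varepsilon s_0)$, so $\dot x=\varepsilon\mathcal A(t)x$ is uniformly asymptotically stable with constants $\mathcal N,\nu\varepsilon$; since multiplying coefficients by the positive constants $\varepsilon,\sqrt\varepsilon$ leaves $\mathfrak M_{(\mathcal A,f,g)}$ and $\mathfrak M_{(\mathcal A,f,g)}^u$ unchanged, the asserted strong compatibility of $\varphi_\varepsilon$ with $(\mathcal A,f,g)$ follows.

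The core is (iii). Writing $\bar\phi(t)=\int_{-\infty}^tG_{\bar{\mathcal A}}(t,\tau)\bar f\,\d\tau+\int_{-\infty}^tG_{\bar{\mathcal A}}(t,\tau)\bar g\,\d W(\tau)$ for the unique stationary solution of \eqref{eqL10} (existence by Corollary \ref{corL2*} applied to the autonomous Hurwitz equation \eqref{eqL10}, with $G_{\bar{\mathcal A}}(t,\tau)=e^{\bar{\mathcal A}(t-\tau)}$), I would split
$$\phi_\varepsilon(t)-\bar\phi(t)=D_\varepsilon(t)+S_\varepsilon(t),$$
the drift and diffusion differences, so that $\mathbb E|\phi_\varepsilon(t)-\bar\phi(t)|^2\le 2\mathbb E|D_\varepsilon(t)|^2+2\mathbb E|S_\varepsilon(t)|^2$, and then decompose each piece once more, separating the averaging of the coefficient from that of the Green function:
$$D_\varepsilon(t)=\int_{-\infty}^tG_{\mathcal A_\varepsilon}(t,\tau)(f_\varepsilon(\tau)-\bar f)\,\d\tau+\int_{-\infty}^t\big(G_{\mathcal A_\varepsilon}(t,\tau)-G_{\bar{\mathcal A}}(t,\tau)\big)\bar f\,\d\tau,$$
and analogously for $S_\varepsilon$. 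The Green-function terms are handled by \eqref{eqAL6.1}: with $\delta(\varepsilon):=\sup_{\tau\ge\tau_0}e^{\gamma_0(\tau-\tau_0)}\|G_{\mathcal A_\varepsilon}(\tau,\tau_0)-G_{\bar{\mathcal A}}(\tau,\tau_0)\|\to0$ one gets $\|G_{\mathcal A_\varepsilon}(t,\tau)-G_{\bar{\mathcal A}}(t,\tau)\|\le\delta(\varepsilon)e^{-\gamma_0(t-\tau)}$, and integrating the exponential yields bounds of order $\delta(\varepsilon)$, uniformly in $t$. For the diffusion, Itô's isometry converts $\mathbb E|S_\varepsilon(t)|^2$ into a time integral whose coefficient part is $\le\mathcal N^2\int_{-\infty}^te^{-2\nu(t-\tau)}\mathbb E|g(\tau/\varepsilon)-\bar g|^2\,\d\tau$; since $\tau\mapsto\mathbb E|g(\tau/\varepsilon)-\bar g|^2$ is nonnegative, bounded and integrally convergent to $0$ by (A3) (cf. the remark following Lemma \ref{lG1}), the scalar Lemma \ref{lG1} forces this to $0$ uniformly in $t$.

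The main obstacle is the remaining drift term $\int_{-\infty}^tG_{\mathcal A_\varepsilon}(t,\tau)(f_\varepsilon(\tau)-\bar f)\,\d\tau$, to which Lemma \ref{lG1} does not apply verbatim, the scalar kernel $e^{-\nu(t-\tau)}$ being replaced by the operator kernel $G_{\mathcal A_\varepsilon}(t,\tau)$ on the Banach space $\mathfrak B=L^2(\mathbb P,\mathcal H)$. I would establish the required operator-valued version by repeating the integration-by-parts argument of Lemma \ref{lG1}: with $H_\varepsilon(s):=\int_t^{t+s}(f_\varepsilon-\bar f)\,\d\sigma$, integrating by parts in $\int_{-\infty}^0G_{\mathcal A_\varepsilon}(t,t+s)H_\varepsilon'(s)\,\d s$ produces boundary terms that vanish (since $H_\varepsilon(0)=0$ and $\|G_{\mathcal A_\varepsilon}(t,t+s)H_\varepsilon(s)\|_2\le\mathcal N A\,e^{\nu s}|s|\to0$, where $A$ bounds $\|f_\varepsilon-\bar f\|_2$) plus the term $\int_{-\infty}^0G_{\mathcal A_\varepsilon}(t,t+s)\mathcal A_\varepsilon(t+s)H_\varepsilon(s)\,\d s$. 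The uniform bounds $\|G_{\mathcal A_\varepsilon}(t,\tau)\|\le\mathcal Ne^{-\nu(t-\tau)}$ from \eqref{uab} and $\|\mathcal A_\varepsilon(\tau)\|\le\|\mathcal A\|_\infty$ then reproduce exactly the estimates \eqref{eqlG5}--\eqref{eqlG9}, and the uniform integral convergence $\sup_{|s|\le l,\,t}\|H_\varepsilon(s)\|_2\to0$ supplied by (A2) (again via the remark after Lemma \ref{lG1}) drives this term to $0$ uniformly in $t$; the $\varepsilon$-independence of $\mathcal N,\nu,\|\mathcal A\|_\infty$ is what makes the bound uniform. This proves (iii). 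Finally, (v) follows: the rescaling $s\mapsto\varepsilon s$ shows that $t\mapsto\varphi_\varepsilon(t/\varepsilon)$ solves \eqref{eqL8*} and therefore has the same law as $\phi_\varepsilon$ (because $W_\varepsilon$ has the same law as $W$), so $\mathcal L(\varphi_\varepsilon(t/\varepsilon))=\mathcal L(\phi_\varepsilon(t))$; since every $f$ with $\|f\|_{BL}\le1$ is $1$-Lipschitz, $\beta(\mathcal L(\phi_\varepsilon(t)),\mathcal L(\bar\phi(t)))\le\mathbb E|\phi_\varepsilon(t)-\bar\phi(t)|\le(\mathbb E|\phi_\varepsilon(t)-\bar\phi(t)|^2)^{1/2}$, and taking the supremum over $t$ and letting $\varepsilon\to0$ reduces (v) to (iii).
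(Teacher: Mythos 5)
Your proposal is correct and follows essentially the same route as the paper's proof: the same drift/diffusion splitting with the further decomposition into coefficient-averaging and Green-function terms controlled by \eqref{eqAL6.1}, the same operator-valued integration-by-parts adaptation of Lemma \ref{lG1} for the remaining drift term (using $\partial_s G_{\mathcal A_\varepsilon}(t,t+s)=G_{\mathcal A_\varepsilon}(t,t+s)\mathcal A_\varepsilon(t+s)$ and the split at $s=-l$), It\^o's isometry plus the scalar Lemma \ref{lG1} for the diffusion, and the law identity $\mathcal L(\varphi_\varepsilon(\frac{t}{\varepsilon}))=\mathcal L(\phi_\varepsilon(t))$ to pass from (iii) to (v). The only immaterial deviation is in (iv), where you invoke Theorem \ref{t3.4.4} directly for \eqref{eqALN1}, while the paper transfers existence, uniqueness and the $\mathfrak M$-inclusions from \eqref{eqL8*} via the rescaling $t\mapsto\varepsilon t$ and $\{t_n\}\mapsto\{\varepsilon t_n\}$.
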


\begin{proof}
The first and second statements follow directly from Theorem \ref{t3.4.4}.

We now verify the third statement, i.e. the uniform convergence of
the unique bounded solution $\phi_\varepsilon$ to the unique stationary
solution $\bar\phi$ of the averaged equation.
By Theorem \ref{t3.4.4} equation (\ref{eqL10}) has a unique bounded and stationary solution
$\bar{\phi}$, which is given by the formula
\begin{equation}\label{eqL11}
\bar{\phi}(t)= \int_{-\infty}^{t}G_{\bar{\mathcal
A}}(t,\tau)\bar{f}\d\tau +\int_{-\infty}^{t}G_{\bar{\mathcal
A}}(t,\tau)\bar{g}\d W(\tau),
\end{equation}
where
$G_{\bar{\mathcal A}}(t,\tau)
=\exp\left\{\bar{\mathcal A}(t-\tau)\right\}$ for $t,\tau\in\mathbb R$. From (\ref{eqL7.1}) and
(\ref{eqL11}) we get
\begin{align*}
&
\mathbb E|\phi_{\varepsilon}(t)-\bar{\phi}(t)|^2\\
&
=\mathbb E\bigg|\int_{-\infty}^{t}
G_{\mathcal A_{\varepsilon}}(t,\tau)
f_{\varepsilon}(\tau)\d\tau
+\int_{-\infty}^{t}G_{\mathcal A_{\varepsilon}}(t,\tau)
g_{\varepsilon}(\tau)\d W(\tau)\nonumber\\
&\qquad
-\int_{-\infty}^{t}G_{\bar{\mathcal A}}(t,\tau)\bar{f}\d\tau
-\int_{-\infty}^{t}G_{\bar{\mathcal A}}(t,\tau)\bar{g}
\d W(\tau)\bigg|^{2} \nonumber \\
&
\le2\bigg(\mathbb E\left|\int_{-\infty}^{t}
[G_{\mathcal{A_{\varepsilon}}}(t,\tau)
f_{\varepsilon}(\tau)-G_{\bar{\mathcal A}}(t,\tau)\bar{f}]\d\tau\right|^2
\nonumber \\
& \quad
+\mathbb E\left|\int_{-\infty}^{t}
\left[G_{\mathcal A_{\varepsilon}}(t,\tau)g_{\varepsilon}(\tau)
-G_{\bar{\mathcal A}}(t,\tau)\bar{g}\right]\d W(\tau)\right|^2\bigg)
=:I_{1}(t,\varepsilon)+I_{2}(t,\varepsilon).\nonumber
\end{align*}
By equality (\ref{eqAL6.1}) there exists a function $\mathcal
N:(0,\alpha)\to \mathbb R_{+}$ such that $\mathcal
N(\varepsilon)\to 0$ as $\varepsilon \to 0$  and
\begin{equation}\label{eqL14.1}
\|G_{\mathcal A_{\varepsilon}}(t,\tau)-G_{\bar{\mathcal A}}(t,\tau)\|
\le \mathcal N(\varepsilon)e^{-\gamma_{0}(t-\tau)}\nonumber
\end{equation}
for any $t\geq\tau$ ($t,\tau\in\mathbb R$).

Note that
\begin{align}\label{eqL15}
I_{1}(t,\varepsilon)
&
:=2\mathbb E\left|\int_{-\infty}^{t}[G_{\mathcal
A_{\varepsilon}}(t,\tau)f_{\varepsilon}(\tau)-G_{\bar{\mathcal
A}}(t,\tau)\bar{f}]\d\tau\right|^2 \nonumber \\
&
=2\mathbb E\left|\int_{-\infty}^{t}[G_{\mathcal
A_{\varepsilon}}(t,\tau)f_{\varepsilon}(\tau)-G_{\mathcal
A_{\varepsilon}}(t,\tau)\bar{f}] + [G_{\mathcal
A_{\varepsilon}}(t,\tau)\bar{f} -G_{\bar{\mathcal
A}}(t,\tau)\bar{f}]\d\tau\right|^2 \nonumber \\
&
\le4\left(\mathbb E\left|\int_{-\infty}^{t}
G_{\mathcal A_{\varepsilon}}(t,\tau)
\left(f_{\varepsilon}(\tau)-\bar{f}\right)\d\tau\right|^{2}
+\mathbb E\left|\int_{-\infty}^{t}
\left[G_{\mathcal A_{\varepsilon}}(t,\tau)
-G_{\bar{\mathcal A}}(t,\tau)\right]\bar{f}
\d\tau\right|^2\right)  \nonumber \\
&
=:4\left(I_{11}(t,\varepsilon)+I_{12}(t,\varepsilon)\right).\nonumber
\end{align}
To estimate the integral
\begin{equation}\label{eqL15.1}
I_{11}(t,\varepsilon):=\mathbb E\left|\int_{-\infty}^{t}
G_{\mathcal A_{\varepsilon}}(t,\tau)(f_{\varepsilon}(\tau)
-\bar{f})\d\tau\right|^{2},\nonumber
\end{equation}
making the change of variable $s:=\tau -t$ we obtain
\begin{align}\label{eqL15.2}
\int_{-\infty}^{t}G_{\mathcal A_{\varepsilon}}(t,\tau)
\left(f_{\varepsilon}(\tau)- \bar{f}\right)\d\tau
&
=\int_{-\infty}^{0}G_{\mathcal A_{\varepsilon}}(t,t+s)
\left(f_{\varepsilon}(t+s)- \bar{f}\right)\d s
\nonumber \\
&
=\int_{-\infty}^{0}G_{\mathcal A_{\varepsilon}}(t,t+s)
\frac{\d~}{\d s}\left(\int_{t}^{t+s}
[f_{\varepsilon}(\sigma)-\bar{f}]\d\sigma\right) \d s .
\end{align}
Since
\begin{equation}\label{eqL15.3}
\left\|G_{\mathcal{A}_{\varepsilon}}(t,t+s)\int_{t}^{t+s}
[f_{\varepsilon}(\sigma)-\bar{f}]\d\sigma\right\|_2
\le 2\mathcal N \|f\|_{\infty}e^{\nu s}|s|
\nonumber
\end{equation}
for any $s<0$, we have
\begin{equation}\label{eqL15.4}
\lim\limits_{s\to-\infty}G_{\mathcal{A}_{\varepsilon}}(t,t+s)
\int_{t}^{t+s}[f_{\varepsilon}(\sigma)-\bar{f}]\d\sigma =0.\nonumber
\end{equation}
Consequently, integrating by parts from (\ref{eqL15.2}) we get
\begin{equation}\label{eqL15.3.1}
\int_{-\infty}^{0}G_{\mathcal A_{\varepsilon}}(t,t+s)
\frac{\d~}{\d s}\left(\int_{t}^{t+s}
[f_{\varepsilon}(\sigma)-\bar{f}]\d\sigma\right) \d s
=-\int_{-\infty}^{0} \frac{\partial G_{\mathcal{A}_{\varepsilon}}(t,t+s)}{\partial s} \left(\int_{t}^{t+s}
[f_{\varepsilon}(\sigma)-\bar{f}]\d\sigma\right)
\d s. \nonumber
\end{equation}
Note that
\begin{equation}\label{eqL15.4.1}
\frac{\partial G_{\mathcal A}(t,\tau)}{\partial \tau}
=-G_{\mathcal A}(t,\tau)\mathcal A(\tau),\nonumber
\end{equation}
so we have
\begin{equation}\label{eqL15.5}
\left\|\frac{\partial G_{\mathcal{A}_{\varepsilon}}(t,t+s)}
{\partial s}\right\|
\le \mathcal N \|\mathcal A\|_{\infty}e^{\nu s}\nonumber
\end{equation}
for any $t\in\mathbb R$ and $s<0$.

Let now $l$ be an arbitrary positive number, then we have
\begin{align*}
&
-\int_{-\infty}^{0}\frac{\partial G_{\mathcal{A}_{\varepsilon}}(t,t+s)}{\partial s} \left(\int_{t}^{t+s}
[f_{\varepsilon}(\sigma)-\bar{f}]\d\sigma\right)
\d s \\
&
=-\int_{-\infty}^{-l}\frac{\partial G_{\mathcal{A}_{\varepsilon}}(t,t+s)}{\partial s} \left(\int_{t}^{t+s}
[f_{\varepsilon}(\sigma)-\bar{f}]\d\sigma\right)
\d s
-\int_{-l}^{0}\frac{\partial G_{\mathcal{A}_{\varepsilon}}(t,t+s)}{\partial s} \left(\int_{t}^{t+s}
[f_{\varepsilon}(\sigma)-\bar{f}]\d\sigma\right)
\d s\nonumber
\end{align*}
and consequently
\begin{align*}
&
\left\|-\int_{-\infty}^{0} \frac{\partial G_{\mathcal{A}_{\varepsilon}}(t,t+s)}{\partial s} \left(\int_{t}^{t+s}
[f_{\varepsilon}(\sigma)-\bar{f}]\d\sigma\right)
\d s\right\|_2   \\
&
\le\left\|-\int_{-\infty}^{-l}\frac{\partial G_{\mathcal{A}_{\varepsilon}}(t,t+s)}{\partial s} \left(\int_{t}^{t+s}
[f_{\varepsilon}(\sigma)-\bar{f}]\d\sigma\right)
\d s\right\|_2\nonumber\\
&\quad +\left\|\int_{-l}^{0}\frac{\partial G_{\mathcal{A}_{\varepsilon}}(t,t+s)}{\partial s} \left(\int_{t}^{t+s}
[f_{\varepsilon}(\sigma)-\bar{f}]\d\sigma\right)
\d s\right\|_2
\nonumber\\
&
\le\mathcal N \|\mathcal A\|_{\infty}
\left(2\|f\|_{\infty}\left|\int_{-\infty}^{-l}se^{\nu s}\d s\right|
+\left|\int_{-l}^{0}e^{\nu s}\d s\right|
\sup\limits_{|s|\le l,~t\in\mathbb R}
\left\|\int_{t}^{t+s}[f_{\varepsilon}(\sigma)-
\bar{f}]\d\sigma\right\|_2\right)\nonumber \\
&
\le\mathcal{N} \|\mathcal A\|_{\infty}
\left(2\|f\|_{\infty}e^{-\nu l}\left(l+\frac{1}{\nu}\right)
+\frac{1}{\nu}(1-e^{-\nu l})\sup\limits_{|s|\le l,~t\in\mathbb R}
\left\|\int_{t}^{t+s}[f_{\varepsilon}(\sigma)-\bar{f}]
\d\sigma\right\|_2\right).\nonumber
\end{align*}
Letting $\varepsilon \to 0$ in above inequality we get
\begin{align*}
&
\limsup\limits_{\varepsilon \to 0}\sup\limits_{ t\in\mathbb R}
\left\|-\int_{-\infty}^{0}\frac{\partial G_{\mathcal{A}_{\varepsilon}}(t,t+s)}{\partial s} \left(\int_{t}^{t+s}
[f_{\varepsilon}(\sigma)-\bar{f}]\d\sigma\right)
\d s\right\|_2  \\
&
\le2\mathcal{N} \|\mathcal A\|_{\infty}\|f\|_{\infty}
e^{-\nu l}\left(l+\frac{1}{\nu}\right).\nonumber
\end{align*}
Since $l$ is arbitrary, we get by letting $l\to\infty$
\begin{equation}\label{eqL15.9}
 \lim\limits_{\varepsilon \to 0}\sup\limits_{ t\in\mathbb
R}I_{11}(t,\varepsilon)=0\nonumber .
\end{equation}

Note by Theorem \ref{thAL1}--(ii) that
\begin{equation*}
I_{12}(t,\varepsilon):= \mathbb E\left|\int_{-\infty}^{t}
[G_{\mathcal A_{\varepsilon}}(t,\tau)
-G_{\bar{\mathcal A}}(t,\tau)]\bar{f}\d\tau\right|^2
\le \left(\frac{\|\bar{f}\|_{2}\mathcal N(\varepsilon)}
{\gamma_0}\right)^2\to 0
\end{equation*}
as $\varepsilon \to 0$. Consequently,
\begin{equation}\label{eqL15.11}
 \lim\limits_{\varepsilon \to 0}\sup\limits_{ t\in\mathbb
R}I_{1}(t,\varepsilon)=0\nonumber .
\end{equation}

Similarly we can show that
\begin{equation}\label{eqL22}
\lim\limits_{\varepsilon \to 0}\sup\limits_{t\in\mathbb
R}I_{2}(t,\varepsilon)=0.
\end{equation}
In fact, using It\^o's isometry property, the Cauchy-Schwartz
inequality and reasoning as above we get
\begin{align}\label{eqL23}
I_{2}(t,\varepsilon)
&
=2\mathbb E\left|\int_{-\infty}^{t}
[G_{\mathcal A_{\varepsilon}}(t,\tau)g_{\varepsilon}(\tau)
-G_{\bar{\mathcal A}}(t,\tau)\bar{g}]\d W(\tau)\right|^2  \\
&
=2\mathbb E\int_{-\infty}^{t}
\left|G_{\mathcal A_{\varepsilon}}(t,\tau)g_{\varepsilon}(\tau)
-G_{\bar{\mathcal A}}(t,\tau)\bar{g}\right|^2\d\tau \nonumber\\
&
\le 4\left(\mathbb E\int_{-\infty}^{t}
\left|G_{\mathcal A_{\varepsilon}}(t,\tau)
(g_{\varepsilon}(\tau)-\bar{g})\right|^2\d\tau
+\mathbb E\int_{-\infty}^{t}\left|\left(G_{\mathcal A_{\varepsilon}}(t,\tau)
-G_{\bar{\mathcal A}}(t,\tau)\right)\bar{g}\right|^2
\d\tau\right)\nonumber\\
&
\le 4\left(\mathbb E\int_{-\infty}^{t}\mathcal N^{2} e^{-2\nu(t-\tau)}
|g_{\varepsilon}(\tau)- \bar{g}|^{2}\d\tau
+\mathbb E\int_{-\infty}^{t}\mathcal N(\varepsilon)^{2}
e^{-2\gamma_0(t-\tau)}|\bar{g}|^{2}\d\tau\right)\nonumber\\
&
=4 \left(\mathcal N^2 \int_{-\infty}^{t}e^{-2\nu(t-\tau)}
\mathbb E|g_{\varepsilon}(\tau)- \bar{g}|^2\d\tau
+(\mathcal N(\varepsilon))^2
\frac{\|\bar{g}\|_{2}^2}{2\gamma_{0}}\right).\nonumber
\end{align}
By Lemma \ref{lG1} the integral
\begin{equation}\label{eqL23.1}
\int_{-\infty}^{t}e^{-2\nu (t-\tau)}\mathbb E|g_{\varepsilon}(\tau)
-\bar{g}|^2\d\tau
\end{equation}
goes to $0$ as $\varepsilon \to 0$ uniformly with respect to $t\in\mathbb R$.

Passing to the limit in (\ref{eqL23}) and taking into account
(\ref{eqL23.1}) we obtain (\ref{eqL22}), and consequently
$\lim\limits_{\varepsilon\rightarrow0}\sup\limits_{t\in\mathbb R}
\mathbb E|\phi_{\varepsilon}(t)-\bar{\phi}(t)|^2=0$.

To prove the fourth statement we note that the function
$\varphi_{\varepsilon}(t):=\phi_{\varepsilon}(\varepsilon t)$ (for
$t\in\mathbb R$) is a bounded solution of equation
(\ref{eqALN1}) if $\phi_{\varepsilon}$ is a bounded
solution of equation (\ref{eqL8*}). The uniqueness follows from
the fact if $\varphi_{i}$ ($i=1,2$) are two different bounded
solutions of equation (\ref{eqALN1}), then $\phi_{i}(t):=\varphi_{i}(\frac{t}{\varepsilon}), t\in\R$ ($i=1,2$)
are two different bounded solutions of equation (\ref{eqL8*}),
a contradiction to the first statement. It remains to show that $\mathfrak M_{(\mathcal
A,f,g)}\subseteq \tilde{\mathfrak M}_{\varphi_{\varepsilon}}$ and
$\mathfrak M_{(\mathcal A,f,g)}^{u}\subseteq \tilde{\mathfrak
M}_{\varphi_{\varepsilon}}^{u}$. Let $\{t_n\}\in \mathfrak
M_{(\mathcal A,f,g)}$ (respectively, $\{t_n\}\in \mathfrak
M^{u}_{(\mathcal A,f,g)}$), then $\{\varepsilon t_n\}\in \mathfrak
M_{(\mathcal
A_{\varepsilon},f_{\varepsilon},g_{\varepsilon})}\subseteq
\tilde{\mathfrak M}_{\phi_{\varepsilon}}$ (respectively,
$\{\varepsilon t_n\}\in \mathfrak M^{u}_{(\mathcal
A_{\varepsilon},f_{\varepsilon},g_{\varepsilon})}\subseteq
\tilde{\mathfrak M}^{u}_{\phi_{\varepsilon}}$) by Theorem \ref{t3.4.4}.
By the relation between $\varphi_{\varepsilon}$ and $\phi_{\varepsilon}$, we have
$\{t_n\}\in \tilde{\mathfrak M}_{\varphi_{\varepsilon}}$ (respectively, $\{t_n\}\in\tilde{\mathfrak
M}^{u}_{\varphi_{\varepsilon}}$).

Now we are in the position to prove the last statement.
Since the $L^{2}$ convergence implies convergence in probability,
it follows from (\ref{eqL9}) that
\begin{equation}\label{eqL31}
\lim\limits_{\varepsilon \to 0}\sup\limits_{t\in\mathbb
R}\beta(\mathcal L (\phi_{\varepsilon}(t)),\mathcal
L(\bar{\phi}(t)))=0\nonumber .
\end{equation}
On the other hand taking into consideration that $\mathcal
L(W)=\mathcal L(W_{\varepsilon})$, we have
$\mathcal L(\varphi_{\varepsilon}(\frac{t}{\varepsilon}))=\mathcal
L(\phi_{\varepsilon}(t))$ for any $t\in \mathbb R$, and
consequently
\begin{equation}\label{eqL32}
\lim\limits_{\varepsilon \to 0}\sup\limits_{t\in\mathbb
R}\beta(\mathcal L
(\varphi_{\varepsilon}(\frac{t}{\varepsilon})),\mathcal
L(\bar{\phi}(t)))=0\nonumber .
\end{equation}
The proof is complete.
\end{proof}

\begin{coro}\label{cor 10.1}
Under the conditions of \textsl{Theorem} \ref{thAL2} the following
statements hold:
\begin{enumerate}
\item
If the functions $\mathcal A\in C(\mathbb R,L(\mathcal H))$ and
$f,g\in$ $ C_{b}(\mathbb R,$ $L^2(\mathbb
P,\mathcal H))$ are jointly stationary (respectively,
$\tau$-periodic, quasi-periodic with the spectrum of frequencies
$\nu_1,\ldots,\nu_k$, Bohr almost periodic, almost
automorphic, Birkhoff recurrent, Lagrange stable, Levitan almost
periodic, almost recurrent, Poisson stable), then equation
(\ref{eqALN1}) has a unique solution
$\varphi_{\varepsilon} \in C_{b}(\mathbb R,L^2(\mathbb
P,\mathcal H))$ which is stationary (respectively,
$\tau$-periodic, quasi-periodic with the spectrum of frequencies
$\nu_1,\ldots,\nu_k$, Bohr almost periodic, almost
automorphic, Birkhoff recurrent, Lagrange stable, Levitan almost
periodic, almost recurrent, Poisson stable) in distribution;
\item
If the functions $\mathcal A\in C(\mathbb R,L(\mathcal H))$ and
$f,g\in$ $ C_{b}(\mathbb R,$ $L^2(\mathbb P,\mathcal H))$
are Lagrange stable and jointly pseudo-periodic
(respectively, pseudo-recurrent), then equation (\ref{eqALN1}) has
a unique solution $\varphi_{\varepsilon} \in C_{b}(\mathbb
R,L^2(\mathbb P,\mathcal H))$ which is pseudo-periodic
(respectively, pseudo-recurrent) in distribution;
\item
$$
\lim\limits_{\varepsilon \to 0}\sup\limits_{t\in\mathbb
R}\beta(\mathcal
L(\varphi_{\varepsilon}(\frac{t}{\varepsilon}),\mathcal
L(\bar{\phi}(t)))=0\ .
$$
\end{enumerate}
\end{coro}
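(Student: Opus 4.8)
The plan is to prove the three assertions separately. The third is already established --- it is verbatim statement~(5) of Theorem~\ref{thAL2} --- so no further argument is needed there. The first two are classification results of the same flavour as Corollary~\ref{corL2*}: for each fixed $\varepsilon\in(0,\alpha]$ I would obtain the recurrence type of the unique bounded solution $\varphi_\varepsilon$ by transporting the recurrence of the coefficients $(\mathcal A,f,g)$ through Shcherbakov's comparability theorem.

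Concretely, for fixed $\varepsilon\in(0,\alpha]$ Theorem~\ref{thAL2}(4) supplies the unique solution $\varphi_\varepsilon\in C_b(\mathbb R,L^2(\mathbb P,\mathcal H))$ of~\eqref{eqALN1} together with the strong compatibility relations
\[
\mathfrak M_{(\mathcal A,f,g)}\subseteq\tilde{\mathfrak M}_{\varphi_\varepsilon},\qquad
\mathfrak M^{u}_{(\mathcal A,f,g)}\subseteq\tilde{\mathfrak M}^{u}_{\varphi_\varepsilon}.
\]
Existence and uniqueness in (1) and (2) are thus immediate; only the type of recurrence remains to be pinned down.

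To extract it, I would pass to the law map $m_\varepsilon(t):=\mathcal L(\varphi_\varepsilon(t))$, regarded as an element of $C(\mathbb R,\mathcal P(\mathcal H))$; this is legitimate because $(\mathcal P(\mathcal H),\beta)$ is a complete metric space and the $L^2$-continuity of $t\mapsto\varphi_\varepsilon(t)$ forces $m_\varepsilon$ to be continuous. Unwinding the definition of $\tilde{\mathfrak M}_{\varphi_\varepsilon}$, the distributional convergence of $\varphi_\varepsilon(\cdot+t_n)$ uniformly on compact intervals is exactly convergence of the translates $m_\varepsilon^{t_n}$ in the compact-open topology, so the inclusion $\mathfrak M_{(\mathcal A,f,g)}\subseteq\tilde{\mathfrak M}_{\varphi_\varepsilon}$ says precisely that $m_\varepsilon$ is strongly comparable by character of recurrence with $(\mathcal A,f,g)$. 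I would then apply Theorem~\ref{th1} with $\psi=(\mathcal A,f,g)$ and $\varphi=m_\varepsilon$: part~(iii) transfers the quasi-periodic, Bohr almost periodic, almost automorphic, Birkhoff recurrent and Lagrange stable cases; part~(i) reduces strong comparability to comparability, whence part~(ii) transfers the stationary, $\tau$-periodic, Levitan almost periodic, almost recurrent and Poisson stable cases; and part~(iv) settles the Lagrange-stable pseudo-periodic and pseudo-recurrent cases of statement~(2). Since each such property of $m_\varepsilon$ is by definition the corresponding property of $\varphi_\varepsilon$ in distribution, assertions (1) and (2) follow. The uniform inclusion $\mathfrak M^{u}_{(\mathcal A,f,g)}\subseteq\tilde{\mathfrak M}^{u}_{\varphi_\varepsilon}$ together with Lemma~\ref{lAP1} furnishes a parallel route for the Bohr almost periodic and pseudo-periodic subcases.

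This route involves no serious obstacle; it is essentially bookkeeping assembled from Theorems~\ref{th1} and~\ref{thAL2}. The single point I would write out with care is the identification used above --- that the distributional convergence uniformly on compact intervals encoded in $\tilde{\mathfrak M}_{\varphi_\varepsilon}$ coincides with convergence of the translates $m_\varepsilon^{t_n}$ in $C(\mathbb R,\mathcal P(\mathcal H))$ --- because it is exactly what permits Shcherbakov's theorem to be invoked verbatim with state space $\mathcal X=\mathcal P(\mathcal H)$.
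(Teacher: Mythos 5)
Your proposal is correct and follows essentially the same route as the paper, whose proof of this corollary is simply the citation of Theorems \ref{th1} and \ref{thAL2}; your write-up is a faithful expansion of that one-line argument, including the key identification of $\tilde{\mathfrak M}_{\varphi_\varepsilon}$ with the comparability classes of the law map in $C(\mathbb R,\mathcal P(\mathcal H))$.
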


\begin{proof}
These statements follow from Theorems \ref{th1} and \ref{thAL2}.
\end{proof}

\section{Averaging principle for semilinear
stochastic differential equations}

Consider the following stochastic differential equation
\begin{equation}\label{eqG1.1}
\d X(t)=\varepsilon\left(\mathcal A(t)X(t)+F(t,X(t))\right)\d t
+\sqrt{\varepsilon}~G(t,X(t))\d W(t),
\end{equation}
where $\mathcal A\in C(\mathbb R,L(\mathcal H))$, $F,G\in C(\mathbb R\times \mathcal H,\mathcal H)$,
$0<\varepsilon \le \varepsilon_0$ and $W$ is a two-sided standard one-dimensional Brownian motion defined on the filtered
probability space $(\Omega,\mathcal F,\mathbb P,\mathcal F_{t})$,
where $\varepsilon_0$ is a small positive constant and $\mathcal F_{t}:=\sigma\{W(u): u\le t\}$.
Below we will use the following conditions:
\begin{enumerate}
\item[\textbf{(G1)}]
there exists a positive constant $M$ such that
\begin{equation*}
|F(t,0)|\vee |G(t,0)|\le M
\end{equation*}
for any $t\in\R$;
\item[\textbf{(G2)}]
there exists a positive constant $L$ such that
\begin{equation*}
|F(t,x_{1})-F(t,x_{2})|\vee |G(t,x_{1})-G(t,x_{2})|\le
L|x_{1}-x_{2}|
\end{equation*}
for any $x_1,x_2\in \mathcal H$ and $t\in\mathbb R$;
\item[\textbf{(G3)}]
there exist functions $\omega_{1}\in \Psi$
and $\bar{F}\in C(\mathcal H,\mathcal H)$ such that
\begin{equation}\label{eqG3}
\frac{1}{T}\left|\int_{t}^{t+T}[F(s,x)-\bar{F}(x)]\d s\right|
\le\omega_{1}(T)(1+|x|)\nonumber
\end{equation}
for any $T>0$, $x\in \mathcal H$ and $t\in\mathbb R$;
 \item[\textbf{(G4)}]
there exist functions $\omega_{2}\in \Psi$ and $\bar{G}\in
C(\mathcal H,\mathcal H)$ such
that
\begin{equation}\label{eqG4}
\frac{1}{T}\int_{t}^{t+T}\left|G(s,x)-\bar{G}(x)\right|^{2}\d s
\le\omega_{2}(T)(1+|x|^{2})\nonumber
\end{equation}
for any $T>0$, $x\in \mathcal H$ and $t\in\mathbb R$;
\item[\textbf{(G5)}]
$\mathcal A\in C(\mathbb R,L(\mathcal H))$
 and there exists $\bar{\mathcal A}\in L(\mathcal H)$
 such that
\begin{equation}\label{eqG5}
\lim\limits_{T\to +\infty}\frac{1}{T}\int_{t}^{t+T}\mathcal
A(s)\d s=\bar{\mathcal A}\nonumber
\end{equation}
uniformly with respect to $t\in\mathbb R$.
\end{enumerate}

\begin{remark}\label{remML0} \rm
Under the conditions $(\textbf{G1})-(\textbf{G4})$ the
functions $\bar{F}$ and $\bar{G}$ also possess the properties
$(\textbf{G1})-(\textbf{G2})$ with the same constants $M$ and $L$.
\end{remark}

We consider as well the following equations
\begin{equation}\label{eqG2.1}
\d X(t)=(\mathcal A_{\varepsilon}(t)X(t)
+F_{\varepsilon}(t,X(t)))\d t
+G_{\varepsilon}(t,X(t))\d W(t)
\end{equation}
and
\begin{equation}\label{eqG3.1}
\d X(t)=(\mathcal A_{\varepsilon}(t)X(t)
+F_{\varepsilon}(t,X(t)))\d t
+G_{\varepsilon}(t,X(t))\d W_{\varepsilon}(t),
\end{equation}
where $\mathcal A_{\varepsilon}(t)
:=\mathcal A(\frac{t}{\varepsilon})$,
$F_{\varepsilon}(t,x):=F(\frac{t}{\varepsilon},x)$ and
$G_{\varepsilon}(t,x):=G(\frac{t}{\varepsilon},x)$
for $t\in\R$, $x\in\mathcal H$ and
$\varepsilon \in(0,\varepsilon_0]$, and $\varepsilon_{0}$ is
some fixed small positive constant. Here as before
$W_{\varepsilon}(t):=\sqrt{\varepsilon}W(\frac{t}{\varepsilon})$ for $t\in\R$.
Along with equations (\ref{eqG2.1})--(\ref{eqG3.1}) we also
consider the following averaged equation
\begin{equation}\label{eqG5_1}
\d X(t)=(\bar{\mathcal A}X(t)+\bar{F}(X(t)))\d t
+\bar{G}(X(t))\d W(t).
\end{equation}

\begin{lemma}\label{cont}
Suppose $F,G\in C(\mathbb R\times \mathcal H,\mathcal H)$ and that the conditions (G1)--(G2) hold.
If $\varphi$ is an $L^2$-bounded solution (i.e. $\|\varphi\|_\infty=\sup_{t\in\mathbb R}
\mathbb E|\varphi(t)|^2<+\infty$)
of the equation
\begin{equation*}
\d X(t)=F(t,X(t))\d t+G(t,X(t))\d W(t).
\end{equation*}
then there exists a constant $C>0$, depending only on
$M,L,\|\varphi\|_\infty$, such  that
\begin{equation*}
  \mathbb E \left|\varphi(t+h)-\varphi(t)\right|^{2}\leq Ch
\end{equation*}
and
\begin{equation*}
\mathbb E\sup_{t\leq s\leq t+h}|\varphi(s)|^{2}\leq C(h^2+1)
\end{equation*}
for any $t\in\R$ and $h>0$.
\end{lemma}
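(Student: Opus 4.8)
The plan is to work entirely from the integral form of the solution. Since $\varphi$ solves $\d X=F(t,X)\d t+G(t,X)\d W$, for every $t\in\R$ and $h>0$ we have
\begin{equation*}
\varphi(t+h)-\varphi(t)=\int_{t}^{t+h}F(s,\varphi(s))\d s+\int_{t}^{t+h}G(s,\varphi(s))\d W(s).
\end{equation*}
The first ingredient I would record is the linear growth bound coming from (G1)--(G2): for every $s$ and $x$ one has $|F(s,x)|\le|F(s,0)|+|F(s,x)-F(s,0)|\le M+L|x|$, hence $|F(s,x)|^2\le 2M^2+2L^2|x|^2$, and the same for $G$. Combined with the hypothesis $\mathbb E|\varphi(s)|^2\le\|\varphi\|_\infty$ this yields the uniform bound $\mathbb E|F(s,\varphi(s))|^2\vee\mathbb E|G(s,\varphi(s))|^2\le 2M^2+2L^2\|\varphi\|_\infty=:K$ for all $s\in\R$. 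Note that this also guarantees $\mathbb E\int_t^{t+h}|G(s,\varphi(s))|^2\d s<\infty$, so the stochastic integral is a genuine square-integrable martingale and It\^o's isometry and Doob's inequality are applicable.

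For the first estimate I would use $(a+b)^2\le 2a^2+2b^2$ to split into the drift and diffusion parts. The drift part is controlled by the Cauchy--Schwarz inequality, $\mathbb E|\int_t^{t+h}F(s,\varphi(s))\d s|^2\le h\int_t^{t+h}\mathbb E|F(s,\varphi(s))|^2\d s\le Kh^2$, while the diffusion part is handled by It\^o's isometry, $\mathbb E|\int_t^{t+h}G(s,\varphi(s))\d W(s)|^2=\int_t^{t+h}\mathbb E|G(s,\varphi(s))|^2\d s\le Kh$. This gives $\mathbb E|\varphi(t+h)-\varphi(t)|^2\le 2K(h^2+h)$. To upgrade this to the clean bound $Ch$ valid for \emph{all} $h>0$, I would treat the two ranges separately: for $0<h\le 1$ one has $h^2\le h$, so $2K(h^2+h)\le 4Kh$; for $h>1$ I would instead use the crude a priori bound $\mathbb E|\varphi(t+h)-\varphi(t)|^2\le 2(\mathbb E|\varphi(t+h)|^2+\mathbb E|\varphi(t)|^2)\le 4\|\varphi\|_\infty\le 4\|\varphi\|_\infty h$. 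Taking $C=\max\{4K,4\|\varphi\|_\infty\}$ covers both cases.

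For the supremum estimate I would write, for $s\in[t,t+h]$, $\varphi(s)=\varphi(t)+\int_t^sF(u,\varphi(u))\d u+\int_t^sG(u,\varphi(u))\d W(u)$, use $|a+b+c|^2\le 3(a^2+b^2+c^2)$, then take the supremum over $s\in[t,t+h]$ and expectation. The initial term contributes $3\mathbb E|\varphi(t)|^2\le 3\|\varphi\|_\infty$; the drift term, bounded by Cauchy--Schwarz as above, contributes at most $3Kh^2$; and the diffusion term is estimated by Doob's $L^2$ maximal inequality, $\mathbb E\sup_{t\le s\le t+h}|\int_t^sG\,\d W|^2\le 4\mathbb E|\int_t^{t+h}G\,\d W|^2\le 4Kh$, contributing at most $12Kh$. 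Summing yields a bound of the form $3\|\varphi\|_\infty+3Kh^2+12Kh$, and I would absorb the linear term via the elementary inequality $h\le\tfrac12(1+h^2)$ so that everything collapses into the shape $C(h^2+1)$.

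The steps are individually routine; the only points requiring care are bookkeeping rather than depth. The first is making the two target inequalities hold for all $h>0$ and not merely for small $h$: the drift always produces an $h^2$ term, so one must combine the martingale/H\"older estimates with the trivial $L^2$ bound (large $h$) and with $h\le\tfrac12(1+h^2)$ (linear term), which is where I expect the main, if modest, obstacle to lie. The second is ensuring the integrability needed to invoke It\^o's isometry and Doob's inequality, which the linear growth together with the standing $L^2$-boundedness of $\varphi$ supplies automatically.
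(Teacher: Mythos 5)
Your proposal is correct and follows essentially the same route as the paper: the same integral representation, the linear growth bound $|F(s,x)|\vee|G(s,x)|\le M+L|x|$ from (G1)--(G2), Cauchy--Schwarz plus It\^o's isometry for the increment bound, and a maximal inequality (you use Doob's $L^2$ inequality where the paper cites the BDG-type estimate from Da Prato--Zabczyk, which for $p=2$ is the same tool) for the supremum bound. In fact you are slightly more careful than the paper at one point: the paper's own computation for the first estimate ends with a bound proportional to $h^2+h$ and simply writes $\le Ch$, whereas you explicitly dispose of the large-$h$ case via the trivial bound $\mathbb E|\varphi(t+h)-\varphi(t)|^2\le 4\|\varphi\|_\infty\le 4\|\varphi\|_\infty h$.
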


\begin{proof}
Since
\begin{equation*}
\varphi(t+h)= \varphi(t) +
\int_{t}^{t+h}F(\tau,\varphi(\tau))\d\tau
+\int_{t}^{t+h}G(\tau,\varphi(\tau))\d W(\tau),
\end{equation*}
by Cauchy-Schwartz inequality and Ito's isometry property we have
\begin{align*}
\mathbb E |\varphi(t+h)-\varphi(t)|^2
&  \le 2\left(\mathbb E \left|\int_{t}^{t+h}F(\tau,\varphi(\tau))\d\tau\right|^2 +
\mathbb E \left|\int_{t}^{t+h}G(\tau,\varphi(\tau))\d W(\tau)\right|^2\right)
\\
&\le 2\left(h  \int_{t}^{t+h}\mathbb E |F(\tau,\varphi(\tau))|^2\d\tau
+ \int_{t}^{t+h} \mathbb E |G(\tau,\varphi(\tau))|^2 \d\tau \right)
\\
& \le 4\left( h \int_{t}^{t+h}(M^2+L^2\|\varphi\|_\infty^2)\d\tau +
\int_{t}^{t+h}(M^2+L^2\|\varphi\|_\infty^2)\d\tau\right)\\
& \le C h.
\end{align*}

Employing the BDG inequality (see, e.g. \cite[Theorem 4.36]{DZ} on page 114), we have
\begin{align*}
&
\mathbb E\sup_{t\leq s\leq t+h}|\varphi(s)|^{2}\\
&
\leq 3\mathbb E |\varphi(t)|^{2}
+3\mathbb E\sup_{t\leq s\leq t+h}\left|\int_{t}^{s}
F(\tau, \varphi(\tau))
\d\tau\right|^{2} +3\mathbb E\sup_{t\leq s\leq t+h} \left|\int_{t}^{s}
G(\tau, \varphi(\tau)) \d W(\tau)\right|^{2}\\
&
\leq3 \|\varphi\|_{\infty}^{2}
+3\mathbb E\sup_{t\leq s\leq t+h}
\left|\int_{t}^{s}  (M+L|\varphi(\tau)|)\d\tau\right|^{2}
+3C\mathbb E\int_{t}^{t+h}|G(\tau, \varphi(\tau))|^{2}\d\tau\\
&
\leq3\|\varphi\|_{\infty}^{2}
+3 h \int_{t}^{t+h}
2(M^{2}+L^{2}\|\varphi\|_{\infty}^{2})\d\tau
+3C \int_{t}^{t+h}2(M^{2}+L^{2}\|\varphi\|_\infty^{2})
\d\tau\\
&
\leq C(h^2+1),
\end{align*}
where $C$ denotes some positive constants which may change
from line to line.
\end{proof}

\begin{theorem}\label{thG}
Suppose that the following conditions hold:
\begin{enumerate}
\item[(a)]
     $\sup\limits_{t\in \mathbb R}\|\mathcal{A}(t)\|<+\infty$;
\item[(b)]
     the functions $\mathcal A$, $F$, $G$ satisfy
     the conditions {\rm{(G1)--(G5)}}, and the operator $\bar{\mathcal A}$ in {\rm (G5)} is Hurwitz, i.e.
     $\mathcal{R}e\ \lambda <0$ for any
     $\lambda\in\sigma(\bar{\mathcal A})$;
\item[(c)]
\begin{equation}\label{eqG5.1}
L<\frac{\nu}{\sqrt{3}\mathcal N \sqrt{2+\nu}},\nonumber
\end{equation}
where $\mathcal N$ and $\nu$ are the numbers figuring in
Remark \ref{remKBK}-(iii).
\end{enumerate}

Then there exists a positive constant $\varepsilon_1\le \varepsilon_0$ such
that for any $0<\varepsilon \le \varepsilon_{1}$
\begin{enumerate}
\item equation (\ref{eqG1.1}) has a unique solution
      $\varphi_{\varepsilon}\in C_{b}(\mathbb R,L^{2}
      (\mathbb P,\mathcal H))$ and
      $\|\varphi_{\varepsilon}\|_{\infty}\le r$, where
      $r:=\frac{\mathcal N M\sqrt{2+\nu}}{\nu-\mathcal N L\sqrt{2+\nu}}$;
\item equation (\ref{eqG2.1}) has a unique solution
      $\phi_{\varepsilon}\in C_{b}(\mathbb R,L^{2}(\mathbb P,
      \mathcal H))$ and
      $\|\phi_{\varepsilon}\|_{\infty}\le r$;
\item if additionally $F$ and $G$ satisfy (C3) and
      $L< \frac{\nu}{2\mathcal N \sqrt{1+\nu}}$, then  the solution $\varphi_{\varepsilon}$ of equation (\ref{eqG1.1})
     is strongly compatible in distribution (i.e. $\mathfrak M_{(\mathcal A, F,G)}\subseteq
    \tilde{\mathfrak M}_{\varphi_{\varepsilon}}$) and
    $\mathfrak M_{(\mathcal A, F, G)}^{u}\subseteq \tilde{\mathfrak
    M}_{\varphi_{\varepsilon}}^{u}$, recalling that $\tilde{\mathfrak
    M}^{u}_{\varphi_{\varepsilon}}$ means the set of all sequences
    $\{t_n\}$ such that $\varphi_{\varepsilon}(t+t_n)$ converges in
    distribution uniformly with respect to $t\in\mathbb R$;
\item
  \begin{equation}\label{eqG6}
\lim\limits_{\varepsilon \to 0}\sup\limits_{t\in\mathbb R}
\mathbb E|\phi_{\varepsilon}(t)-\bar{\phi}(t)|^2=0,\nonumber
\end{equation}
where $\bar{\phi}$ is the unique stationary solution of equation
(\ref{eqG5_1});
\item
\begin{equation}\label{eqG7}
\lim\limits_{\varepsilon \to 0}\sup\limits_{t\in\mathbb R}
\beta(\mathcal L(\varphi_{\varepsilon}(\frac{t}{\varepsilon}),
\mathcal L(\bar{\phi}(t)))=0.\nonumber
\end{equation}
\end{enumerate}
\end{theorem}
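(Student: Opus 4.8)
The plan is to reduce parts (1)--(3) to the abstract existence and compatibility result Theorem~\ref{t3.4.4}, and to devote the real work to the averaging convergence (4), from which (5) is immediate.

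\emph{Parts (1)--(3).} By Remark~\ref{remKBK}-(iii) the Hurwitz hypothesis on $\bar{\mathcal A}$ furnishes $\alpha\le\varepsilon_0$ and constants $\mathcal N,\nu>0$ with $\|G_{\mathcal A_\varepsilon}(t,\tau)\|\le\mathcal Ne^{-\nu(t-\tau)}$ for all $0<\varepsilon\le\alpha$ and $t\ge\tau$. Since rescaling time and translating leave the constants $M,L$ unchanged, the triple $(\mathcal A_\varepsilon,F_\varepsilon,G_\varepsilon)$ satisfies the hypotheses of Theorem~\ref{t3.4.4} uniformly in $\varepsilon\in(0,\alpha]$, and condition (c), being stronger than the bound $L<\nu/(\mathcal N\sqrt{2+\nu})$ required there, also leaves the headroom exploited in the absorption step of part (4). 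Applying Theorem~\ref{t3.4.4}(1) to \eqref{eqG2.1} yields the unique bounded $\phi_\varepsilon$ with $\|\phi_\varepsilon\|_\infty\le r$, which is part (2). For part (1) I argue as in the fourth statement of Theorem~\ref{thAL2}: if $\psi_\varepsilon$ denotes the unique bounded solution of the $W_\varepsilon$-equation \eqref{eqG3.1}, then $\varphi_\varepsilon(t):=\psi_\varepsilon(\varepsilon t)$ solves \eqref{eqG1.1}, uniqueness transferring through the same rescaling, and $W_\varepsilon\stackrel{d}{=}W$ gives $\|\varphi_\varepsilon\|_\infty\le r$. Part (3) follows by combining Theorem~\ref{t3.4.4}(2) (strong compatibility in distribution under (C3) and the sharper bound on $L$) with this rescaling, precisely as in Theorem~\ref{thAL2}.

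\emph{Part (4): set-up.} The averaged equation \eqref{eqG5_1} has autonomous coefficients; since $\bar F,\bar G$ inherit (G1)--(G2) by Remark~\ref{remML0} and $\|G_{\bar{\mathcal A}}(t,\tau)\|\le\mathcal Ne^{-\nu(t-\tau)}$, Theorem~\ref{t3.4.4} provides the unique bounded, stationary $\bar\phi$ with $\bar\phi(t)=\int_{-\infty}^tG_{\bar{\mathcal A}}(t,\tau)\bar F(\bar\phi(\tau))\,\d\tau+\int_{-\infty}^tG_{\bar{\mathcal A}}(t,\tau)\bar G(\bar\phi(\tau))\,\d W(\tau)$. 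Subtracting this from the mild formula for $\phi_\varepsilon$ and using $(a+b)^2\le2a^2+2b^2$ together with It\^o's isometry, I split $\mathbb E|\phi_\varepsilon(t)-\bar\phi(t)|^2$ into a drift and a diffusion contribution, and decompose each integrand into three pieces: a Lipschitz piece $G_{\mathcal A_\varepsilon}(t,\tau)[F_\varepsilon(\tau,\phi_\varepsilon(\tau))-F_\varepsilon(\tau,\bar\phi(\tau))]$ (and its $G$-analogue), an averaging piece $G_{\mathcal A_\varepsilon}(t,\tau)[F_\varepsilon(\tau,\bar\phi(\tau))-\bar F(\bar\phi(\tau))]$, and a propagator piece $[G_{\mathcal A_\varepsilon}(t,\tau)-G_{\bar{\mathcal A}}(t,\tau)]\bar F(\bar\phi(\tau))$. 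Using $\|G_{\mathcal A_\varepsilon}\|\le\mathcal Ne^{-\nu(t-\tau)}$, the Lipschitz pieces are bounded by a fixed multiple of $L^2\sup_\tau\mathbb E|\phi_\varepsilon(\tau)-\bar\phi(\tau)|^2$, and hypothesis (c) makes the resulting self-referential coefficient strictly below one, so after taking $\sup_t$ they are absorbed into the left-hand side. The propagator pieces are handled by \eqref{eqAL6.1} exactly as the corresponding propagator-difference terms in the proof of Theorem~\ref{thAL2}, and tend to $0$ uniformly in $t$.

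\emph{Part (4): the main obstacle, and part (5).} The averaging pieces are the crux: conditions (G3)--(G4) supply cancellation only for a \emph{frozen} state, whereas here the argument $\bar\phi(\tau)$ drifts with $\tau$ and is random. I would resolve this by Khasminskii's time-discretization. Partition $(-\infty,t]$ into intervals of length $\Delta=\Delta(\varepsilon)$ with right endpoints $\tau_k$, freeze $\bar\phi$ at $\tau_k$ on each, and estimate three errors: the frozen principal term $G_{\mathcal A_\varepsilon}(t,\tau_k)\int[F(\tau/\varepsilon,\bar\phi(\tau_k))-\bar F(\bar\phi(\tau_k))]\,\d\tau$, which after the substitution $\sigma=\tau/\varepsilon$ is bounded via (G3) by $\Delta\,\omega_1(\Delta/\varepsilon)(1+|\bar\phi(\tau_k)|)$ (and via (G4) by $\Delta\,\omega_2(\Delta/\varepsilon)(1+|\bar\phi(\tau_k)|^2)$ in the diffusion); the state-freezing error, controlled in $L^2$ by $|F_\varepsilon(\tau,\bar\phi(\tau))-F_\varepsilon(\tau,\bar\phi(\tau_k))|\le L|\bar\phi(\tau)-\bar\phi(\tau_k)|$ together with $\mathbb E|\bar\phi(\tau)-\bar\phi(\tau_k)|^2\le C\Delta$ from Lemma~\ref{cont}; and the propagator-freezing error from replacing $G_{\mathcal A_\varepsilon}(t,\tau)$ by $G_{\mathcal A_\varepsilon}(t,\tau_k)$, bounded through $\|\partial_\tau G_{\mathcal A_\varepsilon}(t,\tau)\|\le\mathcal N\|\mathcal A\|_\infty e^{-\nu(t-\tau)}$ as in Theorem~\ref{thAL2}. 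Summing the geometric weights $e^{-\nu k\Delta}$ contributes a factor $O(1/\nu)$ and, via Cauchy--Schwarz and $\sup_\tau\mathbb E|\bar\phi(\tau)|^2\le r^2$, the full averaging contribution is of order $\omega_1(\Delta/\varepsilon)+\omega_2(\Delta/\varepsilon)+\sqrt\Delta$, uniformly in $t$. Choosing $\Delta=\Delta(\varepsilon)\to0$ with $\Delta/\varepsilon\to+\infty$ (for instance $\Delta=\sqrt\varepsilon$) sends all of this to $0$. The delicate point is keeping this freezing simultaneously uniform in $t$ and integrated over the randomness while the exponential weights stay summable; granting it, the absorbed Lipschitz terms and vanishing propagator terms give $\sup_t\mathbb E|\phi_\varepsilon(t)-\bar\phi(t)|^2\to0$, i.e.\ part (4). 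Finally, part (5) is immediate: $L^2$-convergence implies convergence in distribution, so part (4) yields $\sup_t\beta(\mathcal L(\phi_\varepsilon(t)),\mathcal L(\bar\phi(t)))\to0$, and since $\varphi_\varepsilon(t/\varepsilon)=\psi_\varepsilon(t)$ with $W_\varepsilon\stackrel{d}{=}W$ forces $\mathcal L(\varphi_\varepsilon(t/\varepsilon))=\mathcal L(\phi_\varepsilon(t))$, the claim follows.
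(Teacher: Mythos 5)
Your proposal is correct and follows essentially the same route as the paper: parts (1)--(3) and (5) via Theorem \ref{t3.4.4}, Remark \ref{remKBK}-(iii) and the time-rescaling $\varphi_{\varepsilon}(t)=\psi_{\varepsilon}(\varepsilon t)$ with $W_{\varepsilon}\stackrel{d}{=}W$, and part (4) via the same three-way splitting (Lipschitz term absorbed using condition (c), propagator difference controlled by \eqref{eqAL6.1}, averaging term handled by Khasminskii's discretization with step $\delta=\sqrt{\varepsilon}$, Lemma \ref{cont} and conditions (G3)--(G4)). The only deviations are cosmetic: you attach the averaging piece to $G_{\mathcal A_{\varepsilon}}$ rather than $G_{\bar{\mathcal A}}$ and discretize $(-\infty,t]$ directly with the geometric weights, whereas the paper first integrates by parts and truncates at $-l$; both realizations work.
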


\begin{proof}
By Theorem \ref{thAL1} (see also Remark \ref{remKBK}-(iii))
there exist positive constants $\mathcal N, \nu$ and $\alpha$ such
that equation
\begin{equation}\label{eqG8}
\d X(t)= \mathcal{A}_{\varepsilon}(t)X(t)\d t\nonumber
\end{equation}
is uniformly asymptotically stable for any $0<\varepsilon \le
\alpha$ and
\begin{equation}\label{eqG9}
\|G_{\mathcal{A}_{\varepsilon}}(t,\tau)\|
\le \mathcal N  e^{-\nu(t-\tau)}\nonumber
\end{equation}
for any $t\ge\tau$. By Theorem \ref{thAL1}-(ii) there are $\gamma_0>0$ and
$\mathcal N:(0,\alpha)\to \mathbb R_{+}$ such that $\mathcal N
(\varepsilon)\to 0$ as $\varepsilon \to 0$ and
\begin{equation}\label{eqG9.1}
\|G_{\mathcal{A}_{\varepsilon}}(t,\tau)-G_{\bar{\mathcal{A}}}(t,\tau)\|
\le\mathcal N(\varepsilon) e^{-\gamma_{0} (t-\tau)}\nonumber
\end{equation}
for any $t\ge\tau$.

Since $Lip(F_{\varepsilon})=Lip(F)\le L$ and $Lip(G_{\varepsilon})= Lip(G)\le L$, by Theorem \ref{t3.4.4}
equation (\ref{eqG1.1}) (respectively, equation (\ref{eqG2.1})) has a unique solution
$\varphi_{\varepsilon}$ (respectively, $\phi_{\varepsilon}$) from $C_{b}(\mathbb R, L^{2}(\mathbb
P,\mathfrak B))$ with $\varphi_{\varepsilon}\in C(\mathbb R,
B[0,r])$ (respectively, $\phi_{\varepsilon}\in C(\mathbb R,
B[0,r])$), where
$$
r:=\frac{\mathcal NM\sqrt{2+\nu}}{\nu-\mathcal N L\sqrt{2+\nu}};
$$
and the solution $\varphi_{\varepsilon}$ is strongly compatible in distribution and
$\mathfrak M_{(\mathcal A,F,G)}^{u}\subseteq \tilde{\mathfrak M}_{\varphi_{\varepsilon}}^{u}$.

Let $\bar{\phi}$ be the unique stationary solution of equation
(\ref{eqG5_1}). We now estimate $\mathbb
E|\phi_{\varepsilon}(t)-\bar{\phi}(t)|^{2}$. To this end,
we note that
\begin{align}\label{eqG10.1}
\mathbb E|\phi_{\varepsilon}(t)-\bar{\phi}(t)|^{2}
&
=\mathbb E\bigg|\int_{-\infty}^{t}G_{\mathcal{A}_{\varepsilon}}(t,\tau)
F_{\varepsilon}(\tau,\phi_{\varepsilon}(\tau))\d\tau
+\int_{-\infty}^{t}G_{\mathcal{A}_{\varepsilon}}(t,\tau)
G_{\varepsilon}(\tau,\phi_{\varepsilon}(\tau))\d W(\tau)\\
&\qquad
-\int_{-\infty}^{t}G_{\bar{\mathcal{A}}}(t,\tau)
\bar{F}(\bar{\phi}(\tau))\d\tau
-\int_{-\infty}^{t}G_{\bar{\mathcal{A}}}(t,\tau)
\bar{G}(\bar{\phi}(\tau))\d W(\tau)\bigg|^{2}\nonumber \\
&
\leq 2\bigg(\mathbb E\left|\int_{-\infty}^{t}
(G_{\mathcal{A}_{\varepsilon}}(t,\tau)
F_{\varepsilon}(\tau,\phi_{\varepsilon}(\tau))
-G_{\bar{\mathcal{A}}}(t,\tau)\bar{F}(\bar{\phi}(\tau)))
\d\tau\right|^2\nonumber \\
&\qquad
+\mathbb E\left|\int_{-\infty}^{t}
(G_{\mathcal{A}_{\varepsilon}}(t,\tau)
G_{\varepsilon}(\tau,\phi_{\varepsilon}(\tau))
-G_{\bar{\mathcal{A}}}(t,\tau)\bar{G}(\bar{\phi}(\tau)))
\d W(\tau)\right|^2\bigg)\nonumber \\
&
=:2(I_{1}(t,\varepsilon)+I_{2}(t,\varepsilon)).\nonumber
\end{align}
Since
\begin{align*}
I_{1}(t,\varepsilon)
&
:=\mathbb E\left|\int_{-\infty}^{t}
\left(G_{\mathcal{A}_{\varepsilon}}(t,\tau)
F_{\varepsilon}(\tau,\phi_{\varepsilon}(\tau))
-G_{\bar{\mathcal{A}}}(t,\tau)
\bar{F}(\bar{\phi}(\tau))\right)
\d\tau\right|^2 \\
&
\leq3\bigg(\mathbb E\left|\int_{-\infty}^{t}
G_{\mathcal{A}_{\varepsilon}}(t,\tau)
(F_{\varepsilon}(\tau,\phi_{\varepsilon}(\tau))
-F_{\varepsilon}(\tau,\bar{\phi}(\tau)))\d\tau\right|^2 \nonumber \\
&\qquad
+\mathbb E\left|\int_{-\infty}^{t}(G_{\mathcal{A}_{\varepsilon}}(t,\tau)
-G_{\bar{\mathcal{A}}}(t,\tau))F_{\varepsilon}(\tau,\bar{\phi}(\tau))
\d\tau\right|^2\nonumber \\
&\qquad
+\mathbb E\left|\int_{-\infty}^{t}G_{\bar{\mathcal{A}}}(t,\tau)
[F_{\varepsilon}(\tau,\bar{\phi}(\tau))
-\bar{F}(\bar{\phi}(\tau))]
\d\tau\right|^2\bigg),\nonumber
\end{align*}
using Cauchy-Schwartz inequality we get
\begin{align}\label{eqG12}
I_{1}(t,\varepsilon)
&
\le 3\bigg(\frac{\mathcal N^2L^{2}}{\nu}
\int_{-\infty}^{t}e^{-\nu(t-\tau)}\mathbb E
|\phi_{\varepsilon}(\tau)- \bar{\phi}(\tau)|^2\d\tau \nonumber \\
&\qquad
+\frac{2\mathcal{N}(\varepsilon)^2}{\gamma_{0}}\int_{-\infty}^{t}
e^{-\gamma_{0}(t-\tau)}(M^2+L^2\|\bar{\phi}\|^2)\d\tau
\nonumber \\
&\qquad
+\mathbb E\left|\int_{-\infty}^{t}G_{\bar{\mathcal{A}}}(t,\tau)
[F_{\varepsilon}(\tau,\bar{\phi}(\tau))
-\bar{F}(\bar{\phi}(\tau))]d\tau\right|^2\bigg)  \nonumber \\
&
\le3\bigg(\frac{\mathcal N^2L^{2}}{\nu^{2}}
\sup\limits_{t\in\mathbb R}
\mathbb E\left|\phi_{\varepsilon}(t)- \bar{\phi}(t)\right|^2
+\frac{2\mathcal{N}(\varepsilon)^2}{\gamma_{0}^{2}}
(M^2+L^2\|\bar{\phi}\|^2)\nonumber \\
&\qquad
+\mathbb E\left|\int_{-\infty}^{t}G_{\bar{\mathcal{A}}}(t,\tau)
[F_{\varepsilon}(\tau,\bar{\phi}(\tau))
-\bar{F}(\bar{\phi}(\tau))]d\tau\right|^2\bigg).
\end{align}
We will show that
\begin{equation}\label{eqG12.1}
\lim\limits_{\varepsilon \to 0}\sup\limits_{t\in\mathbb R}
\mathbb E\left|\int_{-\infty}^{t}G_{\bar{\mathcal{A}}}(t,\tau)
[F_{\varepsilon}(\tau,\bar{\phi}(\tau))
-\bar{F}(\bar{\phi}(\tau))]d\tau\right|^2=0.\nonumber
\end{equation}
To this end, making the change of variable $s=\tau-t$ and
integrating by parts, we obtain for any $l>0$
\begin{align}\label{eqG13.0}
&
\mathbb E\left|\int_{-\infty}^{t}G_{\bar{\mathcal A}}(t,\tau)
[F_{\varepsilon}(\tau,\bar{\phi}(\tau))
-\bar{F}(\bar{\phi}(\tau))]\d\tau\right|^{2}\\
&
=\mathbb E\left|\int_{-\infty}^{0}G_{\bar{\mathcal A}}(t,t+s)
[F_{\varepsilon}(t+s,\bar{\phi}(t+s))
-\bar{F}(\bar{\phi}(t+s))]\d s\right|^{2}\nonumber\\
&
=\mathbb E\left|\int_{-\infty}^{0}G_{\bar{\mathcal A}}(t,t+s)
\frac{\d~}{\d s}\left(\int_{t}^{t+s}
[F_{\varepsilon}(\sigma,\bar{\phi}(\sigma))
-\bar{F}(\bar{\phi}(\sigma))]\d\sigma\right)
\d s\right|^{2}\nonumber\\
&
\leq2\mathbb E\left|-\int_{-\infty}^{0}
\frac{\partial{G_{\bar{\mathcal A}}}(t,t+s)}{\partial s}
\left(\int_{t}^{t+s}[F_{\varepsilon}(\sigma,\bar{\phi}(\sigma))
-\bar{F}(\bar{\phi}(\sigma))]\d\sigma\right)
\d s\right|^{2}\nonumber\\
&
\leq4\mathbb E\left|-\int_{-\infty}^{-l}
\frac{\partial{G_{\bar{\mathcal A}}}(t,t+s)}{\partial s}
\left(\int_{t}^{t+s}[F_{\varepsilon}(\sigma,\bar{\phi}(\sigma))
-\bar{F}(\bar{\phi}(\sigma))]\d\sigma\right)
\d s\right|^{2}\nonumber\\
&\quad
+4\mathbb E\left|-\int_{-l}^{0}
\frac{\partial{G_{\bar{\mathcal A}}}(t,t+s)}{\partial s}
\left(\int_{t}^{t+s}[F_{\varepsilon}(\sigma,\bar{\phi}(\sigma))
-\bar{F}(\bar{\phi}(\sigma))]\d\sigma\right)
\d s\right|^{2}\nonumber\\
&
\leq4\mathbb E\left(\int_{-\infty}^{-l}\left|\int_{t}^{t+s}
[F_{\varepsilon}(\sigma,\bar{\phi}(\sigma))
-\bar{F}(\bar{\phi}(\sigma))]\d\sigma\right|\mathcal N
\|\bar{\mathcal A}\|e^{\nu s}\d s\right)^{2}\nonumber\\
&\quad
+4\mathbb E\sup_{-l\leq s\leq0}\left|\int_{t}^{t+s}
\left[F_{\varepsilon}(\sigma,\bar{\phi}(\sigma))
-\bar{F}(\bar{\phi}(\sigma))\right]\d\sigma\right|^{2}
\left|\int_{-l}^{0}\mathcal N\|\bar{\mathcal A}\|
e^{\nu s}\d s\right|^{2}
=:J_{1}+J_{2}.\nonumber
\end{align}
For $J_{1}$, we have
\begin{align}\label{eqG13.1}
J_{1}
&
:=4\mathbb E\left(\int_{-\infty}^{-l}\left|\int_{t}^{t+s}
[F_{\varepsilon}(\sigma,\bar{\phi}(\sigma))
-\bar{F}(\bar{\phi}(\sigma))]\d\sigma\right|\mathcal N
\|\bar{\mathcal A}\|e^{\nu s}\d s\right)^{2}\\
&
\leq 4\mathcal N^{2}\|\bar{\mathcal A}\|^{2}\int_{-\infty}^{-l}
e^{\nu s}\d s\int_{-\infty}^{-l}\mathbb E\left|\int_{t}^{t+s}
\left[F_{\varepsilon}(\sigma,\bar{\phi}(\sigma))
-\bar{F}(\bar{\phi}(\sigma))\right]\d\sigma\right|^{2}
e^{\nu s}\d s\nonumber\\
&
\leq\frac{4\mathcal N^{2}\|\mathcal{\bar{A}}\|^{2}}{\nu}e^{-\nu l}
\int_{-\infty}^{-l}\mathbb E\left|\int_{t}^{t+s}
\left[F_{\varepsilon}(\sigma,\bar{\phi}(\sigma))
-\bar{F}(\bar{\phi}(\sigma))\right]\d\sigma\right|^{2}
e^{\nu s}\d s\nonumber\\
&
\leq\frac{4\mathcal N^{2}\|\bar{\mathcal A}\|^{2}}{\nu}e^{-\nu l}
\int_{-\infty}^{-l}s\int_{t}^{t+s}\mathbb E
\left[F_{\varepsilon}(\sigma,\bar{\phi}(\sigma))
-\bar{F}(\bar{\phi}(\sigma))\right]^{2}\d\sigma e^{\nu s}\d s\nonumber\\
&
\leq\frac{4\mathcal N^{2}\|\bar{\mathcal A}\|^{2}}{\nu}e^{-\nu l}
\int_{-\infty}^{-l}s\int_{t}^{t+s}
8\left(M^{2}+L^{2}\|\bar{\phi}\|_{\infty}^{2}\right)
\d\sigma e^{\nu s}\d s\nonumber\\
&
\leq\frac{32\mathcal N^{2}\|\bar{\mathcal A}\|^{2}}{\nu}
\left(M^{2}+L^{2}\|\bar{\phi}\|_{\infty}^{2}\right)
e^{-\nu l}\int_{-\infty}^{-l}
s^{2}e^{\nu s}\d s\nonumber\\
&
\leq \frac{32\mathcal N^{2}\|\bar{\mathcal A}\|^{2}}{\nu}
\left(M^{2}+L^{2}\|\bar{\phi}\|_{\infty}^{2}\right)
\left(\frac{l^{2}}{\nu}+\frac{2l}{\nu^{2}}
+\frac{2}{\nu^{3}}\right)e^{-2\nu l}.\nonumber
\end{align}

Divide $[0,l]$ into intervals of size $\delta$,
where $\delta>0$ is a fixed constant
depending on $\varepsilon$.
Denote an adapted process $\tilde{\phi}$ such that
$\tilde{\phi}(\sigma)=\bar{\phi}(t-k\delta)$
for any $\sigma\in(t-(k+1)\delta,t-k\delta]$.
By Lemma \ref{cont}, we have
\begin{align*}
&
\mathbb E\sup_{-l\leq s\leq0}\left|\int_{t}^{t+s}
\left[F_{\varepsilon}(\sigma,\bar{\phi}(\sigma))
-\bar{F}(\bar{\phi}(\sigma))\right]\d\sigma\right|^{2}\\
&
=\mathbb E\sup_{-l\leq s\leq0}\left|\int_{t}^{t+s}
\left[F_{\varepsilon}(\sigma,\bar{\phi}(\sigma))
-F_{\varepsilon}(\sigma,\tilde{\phi}(\sigma))
+F_{\varepsilon}(\sigma,\tilde{\phi}(\sigma))
-\bar{F}(\tilde{\phi}(\sigma))
+\bar{F}(\tilde{\phi}(\sigma))
-\bar{F}(\bar{\phi}(\sigma))\right]\d\sigma\right|^{2}\nonumber\\
&
\leq6\mathbb E\sup_{-l\leq s\leq0}\left|\int_{t}^{t+s}
L|\bar{\phi}(\sigma)-\tilde{\phi}(\sigma)|\d\sigma\right|^{2}
+3\mathbb E\sup_{-l\leq s\leq0}\left|\int_{t}^{t+s}
\left[F_{\varepsilon}(\sigma,\tilde{\phi}(\sigma))
-\bar{F}(\tilde{\phi}(\sigma))\right]\d\sigma\right|^{2}\nonumber\\
&
\leq6\mathbb E\sup_{-l\leq s\leq0}l\int_{t+s}^{t}
L^{2}|\bar{\phi}(\sigma)-\tilde{\phi}(\sigma)|^{2}\d\sigma
+3\mathbb E\sup_{-l\leq s\leq0}\left|\int_{t}^{t+s}
\left[F_{\varepsilon}(\sigma,\tilde{\phi}(\sigma))
-\bar{F}(\tilde{\phi}(\sigma))\right]\d\sigma\right|^{2}\nonumber\\
&
\leq 6L^{2}l^{2}C\delta
+3\mathbb E\sup_{-l\leq s\leq0}\left|\int_{t}^{t+s}
\left[F_{\varepsilon}(\sigma,\tilde{\phi}(\sigma))
-\bar{F}(\tilde{\phi}(\sigma))\right]\d\sigma\right|^{2}
=:6L^{2}l^{2}C\delta+i_{2}\nonumber
\end{align*}

For $i_{2}$, denote $s(\delta):=\left[\frac{|s|}{\delta}\right]$, we have
\begin{align}\label{eqG10.7}
i_{2}
&
:=3\mathbb E\sup_{-l\leq s\leq0}\left|\int_{t}^{t+s}
\left(F_{\varepsilon}(\tau,\tilde{\phi}(\tau))
-\bar{F}(\tilde{\phi}(\tau))\right)\d\tau\right|^{2} \\
&
=3\mathbb E\sup_{-l\leq s\leq0}\Big|\sum_{k=0}^{s(\delta)-1}
\int_{t-k\delta}^{t-(k+1)\delta}
\left(F_{\varepsilon}(\tau,\bar{\phi}(t-k\delta))
-\bar{F}(\bar{\phi}(t-k\delta))\right)\d\tau\nonumber\\
&\qquad
+\int_{t-s(\delta)\cdot\delta}^{t+s}
\left(F_{\varepsilon}(\tau,\bar{\phi}(t-s(\delta)\cdot\delta))
-\bar{F}(\bar{\phi}(t-s(\delta)\cdot\delta))\right)\d\tau\Big|^{2}\nonumber\\
&
\leq6\left[\frac{l}{\delta}\right]\mathbb E\sup_{-l\leq s\leq0}
\sum_{k=0}^{s(\delta)-1}\left|\int_{t-k\delta}^{t-(k+1)\delta}
\left(F_{\varepsilon}(\tau,\bar{\phi}(t-k\delta))
-\bar{F}(\bar{\phi}(t-k\delta))\right)\d\tau\right|^{2}\nonumber\\
&\quad
+6\mathbb E\sup_{-l\leq s\leq0}\left|\int_{t-s(\delta)\cdot\delta}^{t+s}
\left(F_{\varepsilon}(\tau,\bar{\phi}(t-s(\delta)\cdot\delta))
-\bar{F}(\bar{\phi}(t-s(\delta)\cdot\delta))\right)\d\tau\right|^{2}
:=i_{2}^{1}+i_{2}^{2}.\nonumber
\end{align}
For $i_{2}^{1}$, by Lemma \ref{cont} we have
\begin{align}\label{eqG10.9}
i_{2}^{1}
&
:=6\left[\frac{l}{\delta}\right]\mathbb E\sup_{-l\leq s\leq0}
\sum_{k=0}^{s(\delta)-1}
\left|\int_{t-k\delta}^{t-(k+1)\delta}
\left(F_{\varepsilon}(\tau,\bar{\phi}(t-k\delta))
-\bar{F}(\bar{\phi}(t-k\delta))\right)\d\tau\right|^{2}\\
&
\leq\frac{6l^{2}}{\delta^{2}}\mathbb E\sup_{-l\leq s\leq0}
\max_{0\leq k\leq s(\delta)-1}
\left|\int_{\frac{t-k\delta}{\varepsilon}}
^{\frac{t-(k+1)\delta}{\varepsilon}}
\left(F(\tau,\bar{\phi}(t-k\delta))
-\bar{F}(\bar{\phi}(t-k\delta))\right)
\varepsilon\d\tau\right|^{2}\nonumber\\
&
\leq \frac{12l^{2}}{\delta^{2}}\mathbb E\sup_{-l\leq s\leq0}
\max_{0\leq k\leq s(\delta)-1}\delta^{2}\omega_{1}^{2}
\left(\frac{\delta}{\varepsilon}\right)
\left(1+|\bar{\phi}(t-k\delta)|^{2}\right)\nonumber\\
&
\leq12l^{2}\left(C+Cl^2+1\right)\omega_{1}^{2}
\left(\frac{\delta}{\varepsilon}\right).\nonumber
\end{align}
For $i_{2}^{2}$, we obtain
\begin{align}\label{eqG10.8}
i_{2}^{2}
&
:=6\mathbb E\sup_{-l\leq s\leq0}\left|\int_{t-s(\delta)\cdot\delta}^{t+s}
\left(F_{\varepsilon}(\tau,\bar{\phi}(t-s(\delta)\cdot\delta))
-\bar{F}(\bar{\phi}(t-s(\delta)\cdot\delta))\right)\d\tau\right|^{2}\\
&
\leq6\mathbb E\sup_{-l\leq s\leq0}\delta\int^{t-s(\delta)\cdot\delta}_{t+s}
\left(F_{\varepsilon}(\tau,\bar{\phi}(t-s(\delta)\cdot\delta))
-\bar{F}(\bar{\phi}(t-s(\delta)\cdot\delta))\right)^{2}\d\tau\nonumber\\
&
\leq6\delta\mathbb E\sup_{-l\leq s\leq0}\int^{t-s(\delta)\cdot\delta}_{t+s}
8\left(M^{2}+L^{2}|\bar{\phi}(t-s(\delta)\cdot\delta)|^{2}\right)\d\tau\nonumber\\
&
\leq6\delta\int_{t-l}^{t}8\left(M^{2}
+L^{2}\mathbb E\sup_{\sigma\in[t-l,t]}
\|\bar{\phi}(\sigma)\|^{2}\right)\d\tau\nonumber\\
&
\leq 48\left(M^{2}+L^{2}C(l^2+1)\right)l\delta\nonumber.
\end{align}
Therefore, \eqref{eqG10.7}--\eqref{eqG10.8} imply
\begin{equation*}
i_{2}\leq 12\left(Cl^{4}+(C+1)l^{2}\right)\omega_{1}^{2}
\left(\frac{\delta}{\varepsilon}\right)
+48\left(M^{2}+L^{2}C(l^2+1)\right)l\delta.
\end{equation*}
Therefore, we have
\begin{align}\label{eqG13.3}
J_{2}
&
\leq\frac{4\mathcal N^{2}\|\bar{\mathcal A}\|^{2}}{\nu^{2}}
\left(1-e^{-\nu l}\right)^{2}
\Big[6L^{2}l^{2}C\delta+12\left(Cl^{4}+(C+1)l^{2}\right)
\omega_{1}^{2}\left(\frac{\delta}{\varepsilon}\right)\\
&\qquad
+48\left(M^{2}+L^{2}C(l^2+1)\right)
l\delta\Big].\nonumber
\end{align}

Combing \eqref{eqG13.0}, \eqref{eqG13.1} and \eqref{eqG13.3}, we have
\begin{align}\label{eqG10.10}
&
\mathbb E\left|\int_{-\infty}^{t}G_{\bar{\mathcal A}}(t,\tau)
\left[F_{\varepsilon}(\tau,\bar{\phi}(\tau))
-\bar{F}(\bar{\phi}(\tau))\right]\d\tau\right|^{2}\\
&
\leq\frac{32\mathcal N^{2}\|\bar{\mathcal A}\|^{2}}{\nu}
\left(M^{2}+L^{2}\|\bar{\phi}\|_{\infty}^{2}\right)
\left(\frac{l^{2}}{\nu}+\frac{2l}{\nu^{2}}+\frac{2}{\nu^{3}}\right)
e^{-2\nu l}\nonumber\\
&\quad
+\frac{4\mathcal N^{2}\|\bar{\mathcal A}\|^{2}}{\nu^{2}}
\left(1-e^{-\nu l}\right)^{2}
\Big[6L^{2}l^{2}C\delta
+12\left(Cl^{4}+(C+1)l^{2}\right)
\omega_{1}^{2}\left(\frac{\delta}{\varepsilon}\right)\nonumber\\
&\qquad
+48\left(M^{2}+L^{2}C(l^2+1)\right)
l\delta\Big].\nonumber
\end{align}
Taking $\delta=\sqrt{\varepsilon}$ and letting
$\varepsilon\rightarrow0$ in \eqref{eqG10.10}, we have
\begin{align*}
&
\limsup_{\varepsilon\rightarrow0}\sup_{t\in\R}
\mathbb E\left|\int_{-\infty}^{t}G_{\bar{\mathcal A}}(t,\tau)
\left[F_{\varepsilon}(\tau,\bar{\phi}(\tau))
-\bar{F}(\bar{\phi}(\tau))\right]\d\tau\right|^{2}\\
&
\leq\frac{32\mathcal N^{2}\|\bar{\mathcal A}\|^{2}}{\nu}
\left(M^{2}+L^{2}\|\bar{\phi}\|_{\infty}^{2}\right)
\left(\frac{l^{2}}{\nu}+\frac{2l}{\nu^{2}}+\frac{2}{\nu^{3}}\right)
e^{-2\nu l}.
\end{align*}
Since $l$ is arbitrary, by letting $l\to\infty$ we get
\begin{equation}\label{eqG10.11}
\lim_{\varepsilon\rightarrow0}\sup_{t\in\R}
\mathbb E\left|\int_{-\infty}^{t}G_{\bar{\mathcal A}}(t,\tau)
\left[F_{\varepsilon}(\tau,\bar{\phi}(\tau))
-\bar{F}(\bar{\phi}(\tau))\right]\d\tau\right|^{2}=0.
\end{equation}

From \eqref{eqG12} and \eqref{eqG10.11} it follows that there
exists a function $A: (0,\varepsilon_{0})\to \mathbb R_{+}$ so that
$A(\varepsilon)\to 0$ as $\varepsilon \to 0$ and
\begin{equation}\label{eqG12.12}
I_{1}(t,\varepsilon)
\le 3\frac{\mathcal N^2L^{2}}{\nu^{2}}
\sup\limits_{t\in\mathbb R}\mathbb E
|\phi_{\varepsilon}(t)- \bar{\phi}(t)|^2 +A(\varepsilon)
\end{equation}
for any $t\in\mathbb R$ and
$\varepsilon \in (0,\varepsilon_0)$.

Now we will establish a similar estimation for
$I_{2}(t,\varepsilon)$. Since
\begin{align*}
I_{2}(t,\varepsilon)
&
:=\mathbb E\left|\int_{-\infty}^{t}
(G_{\mathcal{A}_{\varepsilon}}(t,\tau)
G_{\varepsilon}(\tau,\phi_{\varepsilon}(\tau))
-G_{\bar{\mathcal{A}}}(t,\tau)\bar{G}(\bar{\phi}(\tau)))
\d W(\tau)\right|^2\nonumber \\
&
\leq3\bigg(\mathbb E\left|\int_{-\infty}^{t}
G_{\mathcal{A}_{\varepsilon}}(t,\tau)
\left(G_{\varepsilon}(\tau,\phi_{\varepsilon}(\tau))
-G_{\varepsilon}(\tau,\bar{\phi}(\tau))\right)
\d W(\tau)\right|^2 \nonumber \\
&\qquad
+\mathbb E\left|\int_{-\infty}^{t}
\left(G_{\mathcal{A}_{\varepsilon}}(t,\tau)
-G_{\bar{\mathcal{A}}}(t,\tau)\right)
G_{\varepsilon}(\tau,\bar{\phi}(\tau))
\d W(\tau)\right|^2\nonumber \\
&\qquad
+\mathbb E\left|\int_{-\infty}^{t}G_{\bar{\mathcal{A}}}(t,\tau)
[G_{\varepsilon}(\tau,\bar{\phi}(\tau))-\bar{G}(\bar{\phi}(\tau))]
\d W(\tau)\right|^2\bigg),\nonumber
\end{align*}
using It\^o's isometry property we have
\begin{align}\label{eqG13}
I_{2}(t,\varepsilon)
&
\le 3\bigg(\mathcal N^2L^{2}
\int_{-\infty}^{t}e^{-2\nu(t-\tau)}\mathbb E
|\phi_{\varepsilon}(\tau)- \bar{\phi}(\tau)|^2\d\tau  \\
&\qquad
+2\mathcal{N}(\varepsilon)^2\int_{-\infty}^{t}
e^{-2\gamma_{0}(t-\tau)}(M^2+L^2\|\bar{\phi}\|_{\infty}^2)\d\tau
\nonumber \\
& \qquad
+\mathcal N^2\int_{-\infty}^{t}e^{-2\nu(t-\tau)}\mathbb E
|G_{\varepsilon}(\tau,\bar{\phi}(\tau))
-\bar{G}(\bar{\phi}(\tau))|^2\d\tau\bigg)  \nonumber \\
&
\le3\bigg(\frac{\mathcal N^2L^{2}}{2\nu}\sup\limits_{t\in\mathbb R}
\mathbb E|\phi_{\varepsilon}(t)- \bar{\phi}(\tau)|^2
+\frac{\mathcal{N}(\varepsilon)^2}{\gamma_{0}}
(M^2+L^2\|\bar{\phi}\|_{\infty}^2)
\nonumber \\
&\qquad
+\mathcal N^2\int_{-\infty}^{t}e^{-2\nu(t-\tau)}\mathbb E
|G_{\varepsilon}(\tau,\bar{\phi}(\tau))
-\bar{G}(\bar{\phi}(\tau))|^2d\tau\bigg). \nonumber
\end{align}
Now we prove that
\begin{equation}\label{eqG15}
\lim\limits_{\varepsilon \to 0}\sup\limits_{t\in\mathbb R}
\left|\int_{-\infty}^{t}e^{-2\nu(t-\tau)}\mathbb E
\left|G_{\varepsilon}(\tau,\bar{\phi}(\tau))
-\bar{G}(\bar{\phi}(\tau))\right|^2d\tau\right|=0.\nonumber
\end{equation}
By Lemma \ref{lG1}, it suffices to show that
\begin{equation*}
\lim\limits_{\varepsilon \to 0}
\sup\limits_{|s|\leq l,~t\in\mathbb R}
\left|\int_{t}^{t+s}\mathbb E
\left|G_{\varepsilon}(\tau,\bar{\phi}(\tau))
-\bar{G}(\bar{\phi}(\tau))\right|^2d\tau\right|=0.
\end{equation*}
To this end,
denote an adapted process $\hat{\phi}$ such that
$\hat{\phi}(\sigma)=\bar{\phi}(t+k\delta)$
for any $\sigma\in[t+k\delta,t+(k+1)\delta)$.
We can assume $s>0$ without loss of generality,
then we have by Lemma \ref{cont}
\begin{align*}
&
\int_{t}^{t+s}\mathbb E
|G_{\varepsilon}(\tau,\bar{\phi}(\tau))
-\bar{G}(\bar{\phi}(\tau))|^2d\tau\\
&
\leq\int_{t}^{t+s}\mathbb E
\left|G_{\varepsilon}(\tau,\bar{\phi}(\tau))
-G_{\varepsilon}(\tau,\hat{\phi}(\tau))
+G_{\varepsilon}(\tau,\hat{\phi}(\tau))
-\bar{G}(\hat{\phi}(\tau))
+\bar{G}(\hat{\phi}(\tau))
-\bar{G}(\bar{\phi}(\tau))\right|^{2}\d\tau\nonumber\\
&
\leq3\int_{t}^{t+s}\mathbb E
\left|G_{\varepsilon}(\tau,\bar{\phi}(\tau))
-G_{\varepsilon}(\tau,\hat{\phi}(\tau))\right|^{2}\d\tau
+3\int_{t}^{t+s}\mathbb E
\left|G_{\varepsilon}(\tau,\hat{\phi}(\tau))
-\bar{G}(\hat{\phi}(\tau))\right|^{2}\d\tau\nonumber\\
&\quad
+3\int_{t}^{t+s}\mathbb E
\left|\bar{G}(\hat{\phi}(\tau))
-\bar{G}(\bar{\phi}(\tau))\right|^{2}\d\tau\nonumber\\
&
\leq6L^{2}lC\delta+3\int_{t}^{t+s}\mathbb E
\left|G_{\varepsilon}(\tau,\hat{\phi}(\tau))
-\bar{G}(\hat{\phi}(\tau))\right|^{2}\d\tau
=:6L^{2}lC\delta+3J_{3}\nonumber
\end{align*}
For $J_{3}$,  we have
\begin{align*}
J_{3}
&
:=\mathbb E\int_{t}^{t+s}
\left|G_{\varepsilon}(\tau,\hat{\phi}(\tau))
-\bar{G}(\hat{\phi}(\tau))\right|^{2}\d\tau \\
&
\leq\mathbb E\bigg(\sum_{k=0}^{s(\delta)-1}
\int_{t+k\delta}^{t+(k+1)\delta}\left|
G_{\varepsilon}(\tau,\bar{\phi}(t+k\delta))
-\bar{G}(\bar{\phi}(t+k\delta))\right|^{2}\d\tau\nonumber\\
&\qquad
+\int_{t+s(\delta)\cdot\delta}^{t+s}\left|
G_{\varepsilon}(\tau,\bar{\phi}(t+s(\delta)\cdot\delta))
-\bar{G}(\bar{\phi}(t+s(\delta)\cdot\delta))\right|^{2}\d\tau\bigg)
=:J_{3}^{1}+J_{3}^{2}.\nonumber
\end{align*}
Then
\begin{align*}
J_{3}^{1}
&
:=\mathbb E\bigg(\sum_{k=0}^{s(\delta)-1}
\int_{t+k\delta}^{t+(k+1)\delta}\left|
G_{\varepsilon}(\tau,\bar{\phi}(t+k\delta))
-\bar{G}(\bar{\phi}(t+k\delta))\right|^{2}\d\tau\bigg)\\
&
\leq\left[\frac{l}{\delta}\right]
\max_{0\leq k\leq s(\delta)-1}\mathbb E
\int_{t+k\delta}^{t+(k+1)\delta}
\left|G_{\varepsilon}(\tau,\bar{\phi}(t+k\delta))
-\bar{G}(\bar{\phi}(t+k\delta))\right|^{2}\d\tau\nonumber\\
&
=\left[\frac{l}{\delta}\right]
\max_{0\leq k\leq s(\delta)-1}\mathbb E
\int_{\frac{t+k\delta}{\varepsilon}}
^{\frac{t+(k+1)\delta}{\varepsilon}}
\left|G(\tau,\bar{\phi}(t+k\delta))
-\bar{G}(\bar{\phi}(t+k\delta))\right|^{2}\varepsilon
\d\tau\nonumber\\
&
\leq l\omega_{2}\left(\frac{\delta}{\varepsilon}\right)
\left(1+\|\bar{\phi}\|_{\infty}^{2}\right)
\end{align*}
and
\begin{align*}
J_{3}^{2}
&
:=\mathbb E\int_{t+s(\delta)\cdot\delta}^{t+s}
\left|G_{\varepsilon}(\tau,\bar{\phi}(t+s(\delta)\cdot\delta))
-\bar{G}(\bar{\phi}(t+s(\delta)\cdot\delta))\right|^{2}\d\tau  \\
&
\leq8\left(M^{2}+L^{2}\|\bar{\phi}\|_{\infty}^{2}\right)
\delta.\nonumber
\end{align*}
Therefore we have
\begin{align}\label{eqG10.20}
&
\sup_{|s|\leq l,~t\in\R}\left|\int_{t}^{t+s}\mathbb E
\left|G_{\varepsilon}(\tau,\bar{\phi}(\tau))
-\bar{G}(\bar{\phi}(\tau))\right|^{2}\d\tau\right|\\
&
\leq 6L^{2}lC\delta
+24(M^{2}+L^{2}\|\bar{\phi}\|_{\infty}^{2})\delta
+3l\omega_{2}\left(\frac{\delta}{\varepsilon}\right)
(1+\|\bar{\phi}\|_{\infty}^{2}).\nonumber
\end{align}
Taking $\delta=\sqrt{\varepsilon}$ and letting
$\varepsilon\rightarrow0$ in \eqref{eqG10.20}, we have
\begin{equation}\label{eqG10.21}
\lim_{\varepsilon\rightarrow0}
\sup_{|s|\leq l,~t\in\R}\left|\int_{t}^{t+s}\mathbb E
\left|G_{\varepsilon}(\tau,\bar{\phi}(\tau))
-\bar{G}(\bar{\phi}(\tau))\right|^{2}\d\tau\right|=0.
\end{equation}
From \eqref{eqG13} and \eqref{eqG10.21} it follows that
\begin{equation}\label{eqG16}
I_{2}(t,\varepsilon)
\le  3(\mathcal N L)^2\frac{1}{2\nu}
\sup\limits_{t\in\mathbb R}\mathbb E
|\phi_{\varepsilon}(t)-\bar{\phi}(t)|^2
+B(\varepsilon),
\end{equation}
where $B(\varepsilon)$ is some positive constant such that
$B(\varepsilon)\to 0$ as $\varepsilon \to 0$.

Combing \eqref{eqG10.1}, \eqref{eqG12.12} and
\eqref{eqG16}, we have
\begin{equation}\label{eqG17}
\left(1-3(\mathcal N L)^2
\left(\frac{2}{\nu^2}+\frac{1}{\nu}\right)\right)
\sup_{t\in\R}\mathbb E|\phi_{\varepsilon}(t)-\bar{\phi}(t)|^2
\leq 2\left(A(\varepsilon)+B(\varepsilon)\right).\nonumber
\end{equation}
Consequently
\begin{equation}\label{eqG18}
\lim\limits_{\varepsilon \to 0}\sup\limits_{t\in\mathbb R}
\mathbb E|\phi_{\varepsilon}(t)-\bar{\phi}(t)|^2=0\nonumber
\end{equation}
because
$1-3(\mathcal N L)^2\left(\frac{2}{\nu^2}+\frac{1}{\nu}\right)>0$.

To finish the proof of the theorem we note that $L^2$-convergence
implies convergence in distribution, so
\begin{equation*}
\lim\limits_{\varepsilon \to 0}\sup\limits_{t\in\mathbb R}
\beta(\mathcal L(\phi_{\varepsilon}(t)),\mathcal L(\bar{\phi}(t)))=0.
\end{equation*}
Since
$\mathcal L(\varphi_{\varepsilon}
(\frac{t}{\varepsilon}))=\mathcal L(\phi_{\varepsilon}(t))$,
we get
\begin{equation}\label{eqG20}
\lim\limits_{\varepsilon \to 0}\sup\limits_{t\in\mathbb R}
\beta(\mathcal L(\varphi_{\varepsilon}
(\frac{t}{\varepsilon})),\mathcal L(\bar{\phi}(t)))=0.\nonumber
\end{equation}
The proof is complete.
\end{proof}

\begin{coro}\label{corL3}
Under the conditions of \textsl{Theorem} \ref{thG} the following
statements hold:
\begin{enumerate}
\item
If the functions $\mathcal A\in C(\mathbb R,L(\mathcal H))$ and
$F,G\in$ $ C(\mathbb R\times\mathcal H,\mathcal H)$ are jointly
stationary (respectively, $\tau$-periodic, quasi-periodic with the
spectrum of frequencies $\nu_1,\ldots,\nu_k$, Bohr almost
periodic, almost automorphic, Birkhoff recurrent, Lagrange
stable, Levitan almost periodic, almost recurrent, Poisson
stable), then equation (\ref{eqG1.1}) has a unique solution
$\varphi_{\varepsilon} \in C_{b}(\mathbb R,L^2(\mathbb P,\mathcal H))$ which is stationary (respectively, $\tau$-periodic,
quasi-periodic with the spectrum of frequencies
$\nu_1,\ldots,\nu_k$, Bohr almost periodic, almost
automorphic, Birkhoff recurrent, Lagrange stable, Levitan almost
periodic, almost recurrent, Poisson stable) in distribution;
\item
If the functions $\mathcal A\in C(\mathbb R,L(\mathcal H))$ and
$F,G\in$ $ C(\mathbb R \times\mathcal H,\mathcal H)$ are Lagrange stable
and jointly pseudo-periodic (respectively, pseudo-recurrent), then
equation (\ref{eqUAS_0}) has a unique solution
$\varphi_{\varepsilon} \in C_{b}(\mathbb R,L^2(\mathbb P,\mathcal H))$ which is pseudo-periodic (respectively, pseudo-recurrent) in
distribution;
\item
$$
\lim\limits_{\varepsilon \to 0}\sup\limits_{t\in\mathbb R}
\beta(\mathcal L(\varphi_{\varepsilon}(\frac{t}{\varepsilon}),
\mathcal L(\bar{\phi}(t)))=0\ .
$$
\end{enumerate}
\end{coro}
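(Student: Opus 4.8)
The plan is to derive (i) and (ii) by feeding the compatibility output of Theorem \ref{thG} into Shcherbakov's comparability machinery (Theorem \ref{th1} and Lemma \ref{lAP1}), exactly as in the linear analogue of Corollary \ref{cor 10.1}; conclusion (iii) is literally assertion (5) of Theorem \ref{thG} and needs no further work.

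First I would record that, under the present hypotheses together with (C3) and $L < \nu/(2\mathcal N\sqrt{1+\nu})$, conclusion (3) of Theorem \ref{thG} produces the unique bounded solution $\varphi_{\varepsilon}$ and guarantees
\[
\mathfrak M_{(\mathcal A, F, G)} \subseteq \tilde{\mathfrak M}_{\varphi_{\varepsilon}},
\qquad
\mathfrak M^{u}_{(\mathcal A, F, G)} \subseteq \tilde{\mathfrak M}^{u}_{\varphi_{\varepsilon}}.
\]
The pivotal reformulation is to regard the law curve $\Phi_{\varepsilon} : \mathbb R \to (\mathcal P(\mathcal H), \beta)$, defined by $\Phi_{\varepsilon}(t) := \mathcal L(\varphi_{\varepsilon}(t))$, as a point of the Bebutov system $C(\mathbb R, \mathcal P(\mathcal H))$. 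By the very definitions of $\tilde{\mathfrak M}_{\varphi_{\varepsilon}}$, $\tilde{\mathfrak N}_{\varphi_{\varepsilon}}$ and their uniform refinements, these sets coincide with $\mathfrak M_{\Phi_{\varepsilon}}$, $\mathfrak N_{\Phi_{\varepsilon}}$, $\mathfrak M^{u}_{\Phi_{\varepsilon}}$, $\mathfrak N^{u}_{\Phi_{\varepsilon}}$. Viewing the coefficient triple $(\mathcal A, F, G)$ as a point of its hull through the identification in Remark \ref{remF1}, the displayed inclusions say precisely that $\Phi_{\varepsilon}$ is strongly comparable by character of recurrence with $(\mathcal A, F, G)$, and likewise in the uniform sense.

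Granting this dictionary, statement (i) is immediate. For the stationary, $\tau$-periodic, Levitan almost periodic, almost recurrent and Poisson stable classes one applies clauses (1) and (2) of Theorem \ref{th1}; for the quasi-periodic, almost automorphic, Birkhoff recurrent and Lagrange stable classes one applies clause (3); and for the Bohr almost periodic class, which is sensitive to uniformity, one invokes clause (1)(b) of Lemma \ref{lAP1} together with the uniform inclusion. In each case $\Phi_{\varepsilon}$ inherits the property, i.e. $\varphi_{\varepsilon}$ possesses it in distribution. For statement (ii) the coefficients are in addition Lagrange stable, so the pseudo-recurrent case follows from clause (4) of Theorem \ref{th1}, while the pseudo-periodic case follows from clause (2) of Lemma \ref{lAP1} via the uniform inclusion $\mathfrak M^{u}_{(\mathcal A, F, G)} \subseteq \tilde{\mathfrak M}^{u}_{\varphi_{\varepsilon}}$.

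The step demanding the most care is the dictionary itself: verifying that convergence in distribution of the shifted solutions, uniform on compact intervals (respectively on all of $\mathbb R$), is genuinely convergence of $\Phi_{\varepsilon}$ in the compact-open topology of $C(\mathbb R, \mathcal P(\mathcal H))$ (respectively uniform convergence there), so that the abstract inclusions of Theorem \ref{th1} and Lemma \ref{lAP1} apply verbatim; and, alongside this, assigning each named recurrence class to the clause that respects the correct non-uniform ($\mathfrak M, \mathfrak N$) versus uniform ($\mathfrak M^{u}, \mathfrak N^{u}$) level. Once these bookkeeping points are settled, (i)--(iii) follow directly from Theorems \ref{thG}, \ref{th1} and Lemma \ref{lAP1}.
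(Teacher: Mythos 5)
Your proposal is correct and follows essentially the same route as the paper, whose proof of this corollary is simply the citation ``These statements follow from Theorems \ref{th1} and \ref{thG} (see also Remark \ref{remF1})''; you have merely made explicit the dictionary between the distributional compatibility inclusions of Theorem \ref{thG}(3),(5) and the hypotheses of Theorem \ref{th1} and Lemma \ref{lAP1}, together with the assignment of each recurrence class to the appropriate clause.
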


\begin{proof}
This statement follows from Theorems \ref{th1} and \ref{thG} (see also Remark \ref{remF1}).
\end{proof}

\begin{remark} \rm
In the present paper, we only consider the second Bogolyubov theorem for semilinear stochastic
{\em ordinary} differential equations, i.e. the linear part $\mathcal A(\cdot)$ is bounded operator
valued. We will consider the case when $\mathcal A(\cdot)$ is an unbounded operator in future work,
which can be applied to related stochastic partial differential equations.
\end{remark}

\end{document}